\def\hsep{1cm}
\def\loopdiam{0.3cm}
\def\unode{0.1cm}
\tikzset{inner sep=0.05cm, minimum size=0.5cm}
\tikzset{
block/.style={
  draw, 
  rectangle, 
  minimum height=0.5cm, 
  minimum width=\hsep*3/2, align=center
  }
}
\tikzset{
	->-/.style={decoration={markings,mark=at position 0.5 with
     {\arrow[xshift=2pt]{Latex[length=4pt,#1]}}},postaction={decorate}},
	-<-/.style={decoration={markings,mark=at position 0.5 with
     {\arrow[xshift=-2pt,rotate=180]{Latex[length=4pt,#1]}}},postaction={decorate}}
}
\theoremstyle{definition} 
\newtheorem{dfn}{Definition}[section]
\newtheorem*{dfn*}{Definition}
\newtheorem{rmk}[dfn]{Remark}
\newtheorem{ex}[dfn]{Example}
\theoremstyle{plain}
\newtheorem{lem}[dfn]{Lemma}
\newtheorem*{lem*}{Lemma}
\newtheorem{prop}[dfn]{Proposition}
\newtheorem*{prop*}{Proposition}
\newtheorem{thm}[dfn]{Theorem}
\newtheorem*{thm*}{Theorem}
\newtheorem{cor}[dfn]{Corollary}
\newtheorem*{cor*}{Corollary}
\newtheorem*{conj*}{Conjecture}
\numberwithin{equation}{section}
\numberwithin{table}{section}
\numberwithin{figure}{section}
\newcommand{\figref}[1]{Figure \ref{#1}}
\newcommand{\tblref}[1]{Table \ref{#1}}
\newenvironment{sd}{\begin{array}{c} \begin{tikzpicture}}{\end{tikzpicture} \end{array}}
\newcommand{\R}{\mathbb{R}}
\newcommand{\C}{\mathbb{C}}
\newcommand{\T}{\mathbb{T}}
\newcommand{\A}{{C(\mathrm{Qut}(\mathcal{G}))}}
\renewcommand{\tilde}{\widetilde}
\newcommand{\ol}[1]{\overline{#1}}
\newcommand{\abs}[1]{{\left\lvert{#1}\right\rvert}}
\mathchardef\hyphen="2D
\newcommand{\quotient}[2]{     
\mathchoice{  \text{\raise1ex\hbox{$#1$}\!\Big/\!\lower1ex\hbox{$#2$}} }
                  {  {#1}\,/\,{#2}  }
                  {  {#1}\,/\,{#2}  }
                  {  {#1}\,/\,{#2}  }
}
\newcommand{\id}{\mathrm{id}}
\newcommand{\Spec}{\mathop{\mathrm{Spec}}\nolimits}
\newcommand{\Tr}{\mathop{\mathrm{Tr}}\nolimits}
\newcommand{\ad}{\mathop{\mathrm{ad}}\nolimits}
\newcommand{\Rep}{\mathop{\mathrm{Rep}}\nolimits}
\newcommand{\Qut}{\mathop{\mathrm{Qut}}\nolimits}
\newcommand{\QSet}{\mathop{\mathrm{QSet}}\nolimits}
\newcommand{\QBij}{\mathop{\mathrm{QBij}}\nolimits}
\newcommand{\QIso}{\mathop{\mathrm{QIso}}\nolimits}
\newcommand{\QAut}{\mathop{\mathrm{QAut}}\nolimits}
\title{Classification of Quantum Graphs on $M_2$ and their Quantum Automorphism Groups}
\author{Junichiro Matsuda\thanks{Department of Mathematics, Graduate School of Science, Kyoto University, 606-8502, Kitashirakawa Oiwakecho, Sakyo-ku, Kyoto, Japan, email: {\tt j.matsuda@math.kyoto-u.ac.jp}. Supported by the establishment of university fellowships towards the creation of science technology innovation, Japan.}}
\date{}
\begin{document}

\maketitle

\begin{abstract}
Motivated by string diagrammatic approach to undirected tracial quantum graphs by Musto, Reutter, Verdon \cite{Musto2018compositional}, in the former part of this paper we diagrammatically formulate directed nontracial quantum graphs by Brannan, Chirvasitu, Eifler, Harris, Paulsen, Su, Wasilewski \cite{Brannan2019bigalois}.
In the latter part, we supply a concrete classification of undirected reflexive quantum graphs on $M_2$ and their quantum automorphism groups in both tracial and nontracial settings. We also obtain quantum isomorphisms between tracial quantum graphs on $M_2$ and certain classical graphs, which reproves the monoidal equivalences between $SO(3)$ and $S_4^+$, and $O(2)$ and $H_2^+$. 

\end{abstract}


\section{Introduction}

The notion of quantum graphs (called noncommutative graphs in \cite{Duan2012zero}) was first introduced by Duan, Severini, Winter \cite{Duan2012zero} in terms of operator systems as the confusability graph of a quantum channel in quantum information theory. 
As an analogue of the fact that simple undirected classical graphs are irreflexive symmetric relations, Weaver \cite{Weaver2021quantum} formulated quantum graphs as reflexive symmetric quantum relations on a von Neumann algebra, which extends \cite{Duan2012zero}, and quantum relations were introduced by Kuperberg, Weaver \cite{Kuperberg2012neumann}.
Following those works, Musto, Reutter, Verdon \cite{Musto2018compositional} formulated finite quantum graphs as adjacency operators on tracial finite quantum sets,
and Brannan et al. \cite{Brannan2019bigalois} generalized them for nontracial settings. 

The key tool of \cite{Musto2018compositional} are string diagrams formulated by Vicary \cite{Vicary2011categorical}, but it should be treated with care if applied to nontracial quantum graphs in \cite{Brannan2019bigalois}.
So in the former part of this paper, we discuss the diagrammatic formulation of nontracial quantum graphs. 

Brannan et al. \cite{Brannan2019bigalois} also introduced the quantum automorphism groups and bigalois extensions of quantum graphs in order to refine the notion of quantum isomorphisms between quantum graphs.
The quantum automorphism group of classical graphs was first introduced by 
Bichon \cite[Definition 3.1]{Bichon2003quantum} in a slightly different way from \cite{Brannan2019bigalois}. The origin of the formulation in \cite{Brannan2019bigalois} is due to Banica \cite[Definition 3.2]{Banica2005quantum}, following the quantum symmetry group of finite spaces introduced by Wang \cite[Definition 2.3]{Wang1998quantum}.

Although some abstract constructions of a quantum graph from others are given categorically by Musto, Reutter, Verdon \cite{Musto2019morita} and algebraically by Brannan, Eifler, Voigt, Weber \cite{Brannan2020quantum}, few nontrivial concrete examples of them were known. 
This motivated the author to compute and classify undirected reflexive quantum graphs and their quantum automorphism groups on the most basic noncommutative algebra $M_2$ as a first step.

This research was concluded during the MSc studies of the author and completed during the first term of his doctoral study. 
After completing the results obtained in this paper, he found that Gromada \cite{Gromada2021some} independently studied partially the same topic.
Gromada classified undirected tracial quantum graphs on $M_2$ \cite[section 3.3]{Gromada2021some} in an insightful way using Lie algebras and the correspondence between the adjacency operators on tracial $M_2$ and projections in $M_2 \otimes M_2^{op}$.

\subsection{Presentation of results}

In section 2, we review basic properties of quantum graphs and generalize the string diagrammatic formulation in \cite{Musto2018compositional} to nontracial cases. An important difference is that the tracial cases allow topological deformation of diagrams while the nontracial cases do not allow deformation through a cusp. 
We compare several properties of directed quantum graphs, in particular, an equivalence between reality and complete positivity of quantum graphs is proved.
As a straightforward generalization of \cite[Proposition 5.19]{Musto2018compositional}, we show that the category of quantum automorphisms of a quantum graph is isomorphic to the finite-dimensional representation category of the quantum automorphism group algebra of the quantum graph.
We also introduce the regularity of quantum graphs, which helps the classification in the following sections.

In section 3, we directly compute the reflexive undirected quantum graphs on $M_2$ and classify them up to quantum and classical isomorphisms.
In the tracial case, they are regular and classified by their degree $d\in\{1,2,3,4\}$. In the nontracial case, they are not always regular but still have a similar form.

In section 4, we identify the quantum automorphism groups of the quantum graphs on $M_2$ classified in section 3. In the tracial case, $SO(3)$ and $O(2)$ appear as quantum automorphism groups. In the nontracial case, the quantum special orthogonal groups $SO_q(3)$ and the unitary torus $\T=U(1)$ appear. 

Observing the spectra, the regular tracial quantum graphs on $M_2$ are isospectral to regular classical graphs on four vertices, which implies the possibility of quantum isomorphisms between them.
Therefore we compute the bigalois extension, the universal coefficient algebra of quantum isomorphisms between quantum graphs introduced by \cite[Definition 4.1]{Brannan2019bigalois}, to find that they are indeed quantum isomorphic. 
Since a quantum isomorphism of quantum graphs induces a monoidal equivalence of their quantum automorphism groups by \cite[Theorem 4.7]{Brannan2019bigalois},
it follows that $SO(3)$ and $S_4^+$, $O(2)$ and $H_2^+$ are monoidally equivalent respectively. 
Although this is already known in quantum group theory \cite{Banica1999symmetries,Banica2007hyperoctahedral}, it exhibits a new approach to monoidal equivalence using quantum graph theory.

Gromada \cite[Proposition 8.1]{Gromada2021some} also obtains the same quantum isomorphisms and monoidal equivalence differently using quantum Cayley graphs as a twist of classical Cayley graphs.

\section*{Acknowledgement}
The author would like to show his greatest appreciation to professor Beno\^{\i}t Collins who is his supervisor at Kyoto University.
Without his guidance and persistent help, this work would not have been possible. The author would like to offer his special thanks to Professor Michael Brannan at University of Waterloo for his insightful advice on quantum graph theory. The author would like to express his gratitude to professor Matthew Kennedy at University of Waterloo for leading him to the quantum graph world.
In addition the author thanks his colleague Akihiro Miyagawa for indicating spelling errors and holding mathematical discussions with him.

\section{Basics in quantum graphs}

Throughout this paper, we denote by $(\cdot)^*$ the involution in a $*$-algebra, and by $(\cdot)^\dagger$ the adjoint of an operator between Hilbert spaces.

\subsection{Quantum sets}

Let $B$ be a finite dimensional unital $C^*$-algebra.
$B$ is equipped with the bilinear multiplication map 
$B\times B \ni (a,b) \mapsto ab \in B$, 
which induces a linear multiplication operator 
$m:B \otimes B \ni a \otimes b \mapsto ab \in B$ 
by the universality of tensor product.
We identify $x \in B$ with a linear map $\C \ni 1 \mapsto x \in B$,
in particular $1$ denotes the multiplicative unit in $B$ and the unital $*$-homomorphism $\C \hookrightarrow B$.

Let $\psi$ be a faithful state on $B$,
then $\langle x,y \rangle_\psi = \braket{y|x}_\psi = \psi(y^* x)$
denotes the inner product on $L^2(B,\psi) = B$. 
The subscript $\psi$ of the inner product is often abbreviated if there is no concern of confusion. 
Via the Hilbert adjoint with respect to $\braket{\cdot|\cdot}_\psi$, $x\in B$ induces 
$x^\dagger=\bra{x}=\psi(x^* \cdot) : B\to\C$, 
and the algebra $(B,m,1)$, a vector space $B$ equipped with the multiplication $m$ and the unit $1$, satisfying
\[
	\begin{array}{c}
	\textrm{associativity}
	\\
	(x \ y) \ z = x \ (y \ z) \mathrlap{\ \forall x,y,z \in B}
	\\
	m(m \otimes \id)=m(\id \otimes m)
	\end{array}
\qquad \quad
	\begin{array}{c}
	\textrm{existence of a unit} 
	\\
	1 \ x=x=x \ 1 \mathrlap{\ \forall x \in B}
	\\
	m(1 \otimes \id)=\id=m(\id \otimes 1)
	\end{array}
\]
 induces a coalgebra $(B,m^\dagger,\psi)$, a vector space $B$ equipped with the comultiplication $m^\dagger$ and the counit $\psi=1^\dagger$, satisfying
\[
	\begin{array}{c}
	\textrm{coassociativity}
	\\
	(m^\dagger \otimes \id)m^\dagger=(\id \otimes m^\dagger)m^\dagger
	\end{array}
	\quad 
	\begin{array}{c}
	\textrm{existence of a counit} 
	\\
	(\psi \otimes \id)m^\dagger=\id=(\id \otimes \psi)m^\dagger
	\end{array}.
\]

Following Vicary \cite{Vicary2011categorical}, we adopt the string diagram notation of operators, which enables our visual understanding and topological calculation. 
For operators $f:H_0 \to H_1$ and $g:H_1 \to H_2$ between Hilbert spaces,
we associate Hilbert spaces with strings, operators with nodes, and read diagrams from bottom to top:
\[
f=\begin{sd}
	\node[circle,draw] (f) {$f$};
	\draw (f)--++(0,-1/2) node[below](H0){$H_0$};
	\draw (f)--++(0,1/2) node[above](H1){$H_1$};
\end{sd},\qquad
g=\begin{sd}
	\node[circle,draw] (g) {$g$};
	\draw (g)--++(0,-1/2) node[below](H1){$H_1$};
	\draw (g)--++(0,1/2) node[above](H2){$H_2$};
\end{sd}.
\]
The composition $gf=g \circ f: H_0 \to H_2$ and 
the tensor product $f \otimes g: H_0 \otimes H_1 \to H_1 \otimes H_2$ 
are denoted by the vertical and horizontal composition of the diagrams respectively,
and the Hilbert adjoint $f^\dagger: H_1 \to H_0$ by the vertical mirroring of the diagram:
\[
g \circ f=\begin{sd}
	\node[circle,draw] (f) {$f$}; 
	\node[circle,draw,above=1/2 of f.center] (g) {$g$};
	\draw (f)--++(0,-1/2) node[below](H0){$H_0$};
	\draw (f)--(g);
	\draw (g)--++(0,1/2) node[above](H1){$H_1$};
\end{sd},
\qquad
f \otimes g=\begin{sd}
	\node[circle,draw] (f) {$f$};
	\draw (f)--++(0,-1/2) node[below](H0){$H_0$};
	\draw (f)--++(0,1/2) node[above](H1){$H_1$};
	\node[circle,draw,right=1/2 of f] (g) {$g$};
	\draw (g)--++(0,-1/2) node[below](H1b){$H_1$};
	\draw (g)--++(0,1/2) node[above](H2){$H_2$};
\end{sd},
\qquad
f^\dagger=\begin{sd}
	\node[circle,draw] (f`) {$f^\dagger$};
	\draw (f`)--++(0,1/2) node[above](H0){$H_0$};
	\draw (f`)--++(0,-1/2) node[below](H1){$H_1$};
\end{sd}.
\]
When a Hilbert space $H$ and its dual $H^*$ or a $C^*$-algebra $B$ appear in a string diagram, we draw $H$ as an oriented string from bottom to top,
$H^*$ as an oriented string from top to bottom, and
$B$ as an unoriented string:
\[
\id_H=
\begin{sd} \node(H){$H$}; \draw[->-](H)--++(0,3/2); \end{sd},
\quad
\id_{H^*}=
\begin{sd} \node(H*){$H^*$}; \draw[-<-] (H*)--(0,3/2); \end{sd},
\quad
\id_B=
\begin{sd} \node(B){$B$}; \draw (B)--(0,3/2); \end{sd}
\] 
We denote the coupling operators of $H$ and $H^*$ and their adjoints by
\begin{align}
\begin{sd}
	\node (H1) at (-0.5,0) {$H$};
	\node (H2) at (0.5,0) {$H^*$};
	\draw[->-] (H1.north)  arc(180:0:0.5) coordinate[midway] (m1)  (H2) ;
	\node[above=0.5 of m1]{$\C$};
	\node (s) at (1.2,0) {$v \otimes f$};
	\node (t) at (1.2,1.5) {$f(v)$};
	\draw[|->] (s)--(t);
\end{sd},
&
\begin{sd}
	\node (H1) at (-0.5,0) {$H^*$};
	\node (H2) at (0.5,0) {$H$};
	\draw[-<-] (H1.north)  arc(180:0:0.5) coordinate[midway] (m1)  (H2) ;
	\node[above=0.5 of m1]{$\C$};
	\node (s) at (1.2,0) {$f \otimes v$};
	\node (t) at (1.2,1.5) {$f(v)$};
	\draw[|->] (s)--(t);
\end{sd},			
&
\begin{sd}
	\node (H1) at (-0.5,0) {$H$};
	\node (H2) at (0.5,0) {$H^*$};
	\draw[-<-] (H1.south)  arc(-180:0:0.5) coordinate[midway] (m1)  (H2) ;
	\node[below=0.5 of m1]{$\C$};
	\node (s) at (1.6,0) {$\sum v_i \otimes v_i^\dagger$};
	\node (t) at (1.6,-1.5) {$1$};
	\draw[<-|] (s)--(t);
\end{sd},
&
\begin{sd}
	\node (H1) at (-0.5,0) {$H^*$};
	\node (H2) at (0.5,0) {$H$};
	\draw[->-] (H1.south)  arc(-180:0:0.5) coordinate[midway] (m1)  (H2) ;
	\node[below=0.5 of m1]{$\C$};
	\node (s) at (1.6,0) {$\sum v_i^\dagger \otimes v_i$};
	\node (t) at (1.6,-1.5) {$1$};
	\draw[<-|] (s)--(t);
\end{sd}						\label{B(H)-unit}
\end{align}
where $\{v_i\}_i$ is an orthonormal basis (ONB) for $H$, 
and $v^\dagger=\bra{v}=\braket{v|\cdot} \in H^*$ for $v=\ket{v} \in H$.
Note that we can naturally identify $H \otimes H^*$ with $B(H)$ by 
$H \otimes H^* \ni \ket{v} \otimes \bra{w} \mapsto \ket{v}\bra{w} \in B(H)$.
Then (\ref{B(H)-unit}) is identified with the unit map $\C \ni 1 \to \id_H \in B(H)$
and the canonical trace $\Tr: B(H) \to \C$.

The operators (\ref{B(H)-unit}) satisfies the following equalities, the so-called snake equation in \cite[section 2.2, (5)]{Musto2018compositional}:
\begin{align}
\begin{sd}
	\draw[-<-] (0,0) arc(0:90:\hsep/3) coordinate(m)
	 arc(90:180:\hsep/3)--++(0,-\hsep/2) ;
	\draw[->-] (0,0) arc(-180:-90:\hsep/3) coordinate(m*) 
	 arc(-90:0:\hsep/3)--++(0,\hsep/2) ;
\end{sd}
=
\begin{sd}
\draw[->-] (0,0) --(0,1);
\end{sd}
=
\begin{sd}
	\draw[->-] (0,0) arc(0:-90:\hsep/3) coordinate(m)
	 arc(-90:-180:\hsep/3)--++(0,\hsep/2) ;
	\draw[-<-] (0,0) arc(180:90:\hsep/3) coordinate(m*) 
	 arc(90:0:\hsep/3)--++(0,-\hsep/2) ;
\end{sd}, 
\quad
\begin{sd}
	\draw[->-] (0,0) arc(0:90:\hsep/3) coordinate(m)
	 arc(90:180:\hsep/3)--++(0,-\hsep/2) ;
	\draw[-<-] (0,0) arc(-180:-90:\hsep/3) coordinate(m*) 
	 arc(-90:0:\hsep/3)--++(0,\hsep/2) ;
\end{sd}
=
\begin{sd}
\draw[-<-] (0,0) --(0,1);
\end{sd}
=
\begin{sd}
	\draw[-<-] (0,0) arc(0:-90:\hsep/3) coordinate(m)
	 arc(-90:-180:\hsep/3)--++(0,\hsep/2) ;
	\draw[->-] (0,0) arc(180:90:\hsep/3) coordinate(m*) 
	 arc(90:0:\hsep/3)--++(0,-\hsep/2) ;
\end{sd}. \label{Hsnake}
\end{align}

The canonical operators associated with $(B,\psi)$ are denoted by
\[
1=\begin{sd}
	\node (C1) {$\C$};
	\draw  (0,1/2) arc(90:450:0.1) --(0,1) node(B1)[above]{$B$};
\end{sd}, \quad
m=\begin{sd}  
	\node (B1) at (-0.5,0) {$B$};
	\node (B2) at (0.5,0) {$B$};
	\draw (B1.north) arc(180:0:0.5) coordinate[midway] (m1) -- (B2) ;
	\draw (m1) --++(0,0.5) node(B3) [above]{$B$};
\end{sd}, \quad
\psi=1^\dagger=\begin{sd}
	\node (*1) {$\C$};
	\draw  (0,-1/2) arc(-90:270:0.1) --(0,-1) node(B1)[below]{$B$};
\end{sd}, \quad
m^\dagger=\begin{sd}  
	\node (B1) at (-0.5,0) {$B$};
	\node (B2) at (0.5,0) {$B$};
	\draw (B1.south) arc(-180:0:0.5) coordinate[midway] (m1) -- (B2) ;
	\draw (m1) --++(0,-0.5) node(B3) [below]{$B$};
\end{sd}.
\]
For simplicity we denote $\psi m$ and $m^\dagger 1$ without the vertical segment and node as follows:
\[
\psi m 
=
\begin{sd}
	\node (B1) at (-0.5,0) {$B$};
	\node (B2) at (0.5,0) {$B$};
	\draw (B1.north) arc(180:0:0.5) coordinate[midway] (m1) ;
	\draw (m1)--++(0,0.1) arc(-90:270:0.1) ;
	\node[above=0.5 of m1]{$\C$};
\end{sd}
=
\begin{sd}
	\node (B1) at (-0.5,0) {$B$};
	\node (B2) at (0.5,0) {$B$};
	\draw (B1.north) arc(180:0:0.5) coordinate[midway] (m1);
	\node[above=0.5 of m1]{$\C$};
\end{sd}, 
\quad
m^\dagger 1
=
\begin{sd}  
	\node (B1) at (-0.5,0) {$B$};
	\node (B2) at (0.5,0) {$B$};
	\draw (B1.south) arc(-180:0:0.5) coordinate[midway] (m1) ;
	\draw (m1)--++(0,-0.1) arc(90:450:0.1) ;
	\node[below=0.5 of m1]{$\C$};
\end{sd}
=
\begin{sd}  
	\node (B1) at (-0.5,0) {$B$};
	\node (B2) at (0.5,0) {$B$};
	\draw (B1.south)  arc(-180:0:0.5) coordinate[midway] (m1) ;
	\node[below=0.5 of m1]{$\C$};
\end{sd}.
\]
The linear extension of a flip map $x\otimes y \mapsto y \otimes x$ is denoted by a crossing of the strings
$\begin{sd}
	\draw (0,0)to[out=90,in=-90](1/2,1/2)
	 (0,1/2) to[out=-90,in=90](1/2,0) ;
\end{sd}$.

The algebra and coalgebra structure of $(B,m,1,m^\dagger,\psi)$ is depicted as follows:
\begin{align*}
	\begin{array}{c}
	\textrm{associative}
	\\
	\begin{sd}
		\draw (0,0) arc(0:180:\hsep/2) coordinate[pos=1/2](m1) ;
		\draw (m1) arc(180:90:\hsep/2)coordinate(m2) 
		to[out=0,in=90](\hsep,0);
		\draw (m2)--++(0,1/4);
	\end{sd}
	=
	\begin{sd}
		\draw (0,0) arc(180:0:\hsep/2) coordinate[pos=1/2](m1) ;
		\draw (m1) arc(0:90:\hsep/2)coordinate(m2) 
		to[out=180,in=90](-\hsep,0);
		\draw (m2)--++(0,1/4);
	\end{sd}
	\\
	m(m \otimes \id)=m(\id \otimes m)
	\end{array}
&\quad &
	\begin{array}{c}
	\textrm{unital} 
	\\
	\begin{sd}
		\draw (0,0) arc(0:90:\hsep/2) coordinate(m1) arc(90:135:\hsep/2) 
		coordinate (u) ;
		\draw[fill=white] (u) circle (\unode);
		\draw (m1)--++(0,1/2);
	\end{sd}
	=
	\begin{sd} 
	\draw (0,0)--++(0,1); 
	\end{sd}
	=
	\begin{sd}
		\draw (0,0) arc(180:90:\hsep/2) coordinate(m1) arc(90:45:\hsep/2) 
		coordinate (u) ;
		\draw[fill=white] (u) circle (\unode);
		\draw (m1)--++(0,1/2);
	\end{sd}
	\\
	m(1 \otimes \id)=\id=m(\id \otimes 1)
	\end{array}
\end{align*}
\begin{align*}
	\begin{array}{c}
	\textrm{coassociative}
	\\
	\begin{sd}
		\draw (0,0) arc(0:-180:\hsep/2) coordinate[pos=1/2](m1) ;
		\draw (m1) arc(-180:-90:\hsep/2)coordinate(m2) 
		to[out=0,in=-90](\hsep,0);
		\draw (m2)--++(0,-1/4);
	\end{sd}
	=
	\begin{sd}
		\draw (0,0) arc(-180:0:\hsep/2) coordinate[pos=1/2](m1) ;
		\draw (m1) arc(0:-90:\hsep/2)coordinate(m2) 
		to[out=-180,in=-90](-\hsep,0);
		\draw (m2)--++(0,-1/4);
	\end{sd}
	\\
	(m^\dagger \otimes \id)m^\dagger=(\id \otimes m^\dagger)m^\dagger
	\end{array}
	&\quad &
	\begin{array}{c}
	\textrm{counital} 
\\
	\begin{sd}
		\draw (0,0) arc(0:-90:\hsep/2) coordinate(m1) 
		arc(-90:-135:\hsep/2) coordinate (u) ;
		\draw[fill=white] (u) circle (\unode);
		\draw (m1)--++(0,-1/2);
	\end{sd}
	=
	\begin{sd} 
	\draw (0,0)--++(0,-1); 
	\end{sd}
	=
	\begin{sd}
		\draw (0,0) arc(-180:-90:\hsep/2) coordinate(m1) 
		arc(-90:-45:\hsep/2) coordinate (u) ;
		\draw[fill=white] (u) circle (\unode);
		\draw (m1)--++(0,-1/2);
	\end{sd}
	\\
	(\psi \otimes \id)m^\dagger=\id=(\id \otimes \psi)m^\dagger
	\end{array}
\end{align*}
The quintuple $(B,m,1,m^\dagger,\psi)$ forms a Frobenius algebra: 

\begin{dfn}[{cf. Vicary \cite[Definition 3.2]{Vicary2011categorical}}]
An algebra with coalgebra structure is called a Frobenius algebra if the multiplication and comultiplication satisfy the \emph{Frobenius equation}:
\begin{align}\label{Frobeniuseq}
\begin{array}{c}
\begin{sd}
	\draw (0,0) arc(0:90:\hsep/3) coordinate(m)
	 arc(90:180:\hsep/3)--++(0,-\hsep*2/3) ;
	\draw (0,0) arc(-180:-90:\hsep/3) coordinate(m*) 
	 arc(-90:0:\hsep/3)--++(0,\hsep*2/3) ;
	\draw (m)--++(0,\hsep/3) ;
	\draw (m*)--++(0,-\hsep/3) ;
\end{sd}
=
\begin{sd}
	\draw (0,0) arc(0:180:\hsep/2) coordinate[pos=1/2](m) ;
	\draw (m)--++(0,\hsep/2)coordinate (m*);
	\draw (m*) arc(-90:0:\hsep/2) 
	(m*) arc(-90:-180:\hsep/2);
\end{sd}
=
\begin{sd}
	\draw (0,0) arc(0:-90:\hsep/3) coordinate(m)
	 arc(-90:-180:\hsep/3)--++(0,\hsep*2/3) ;
	\draw (0,0) arc(180:90:\hsep/3) coordinate(m*) 
	 arc(90:0:\hsep/3)--++(0,-\hsep*2/3) ;
	\draw (m)--++(0,-\hsep/3) ;
	\draw (m*)--++(0,\hsep/3) ;
\end{sd}.
\\
(m \otimes \id)(\id \otimes m^\dagger)
=m^\dagger m
=(\id \otimes m) (m^\dagger \otimes \id) \
\end{array}
\end{align}

\end{dfn}

By composing the unit and the counit, we also have the following snake equation:
\begin{align}
\begin{array}{c}
\begin{sd}
	\draw (0,0) arc(0:90:\hsep/3) coordinate(m)
	 arc(90:180:\hsep/3)--++(0,-\hsep*2/3) ;
	\draw (0,0) arc(-180:-90:\hsep/3) coordinate(m*) 
	 arc(-90:0:\hsep/3)--++(0,\hsep*2/3) ;
\end{sd}
=
\begin{sd}
\draw (0,0) --(0,1.5);
\end{sd}
=
\begin{sd}
	\draw (0,0) arc(0:-90:\hsep/3) coordinate(m)
	 arc(-90:-180:\hsep/3)--++(0,\hsep*2/3) ;
	\draw (0,0) arc(180:90:\hsep/3) coordinate(m*) 
	 arc(90:0:\hsep/3)--++(0,-\hsep*2/3) ;
\end{sd}.
\\
(\psi m \otimes \id)(\id \otimes m^\dagger 1)
=\id 
=(\id \otimes \psi m) (m^\dagger 1 \otimes \id) \
\end{array}		\label{snakeeq}
\end{align}
Note that we may compute string diagrams by topological deformation via Frobenius equality, snake equality, associativity, and coassociativity.

\begin{dfn}[{Banica \cite[section 1]{Banica2002quantum}, Musto, Reutter, Verdon \cite[Terminology 3.1]{Musto2018compositional},
Brannan, et al. \cite[Definition 3.1]{Brannan2019bigalois}}]
Let $\psi$ be a faithful state on a finite dimensional $C^*$-algebra $B$ as above, and $\delta>0$.
The state $\psi$ is called a \emph{$\delta$-form} on $B$ if the following equality 
(so-called \emph{special} in Vicary \cite{Vicary2011categorical}) is satisfied:
\begin{align}
\begin{sd}
	\coordinate (m);
	\draw (m) arc(90:450:\hsep/2) coordinate[pos=1/2] (m*);
	\draw (m)--++(0,1/2) (m*)--++(0,-1/2);
\end{sd}
=\delta^2 
\begin{sd} 
\draw (0,0)--++(0,2); 
\end{sd}
\textrm{ , i.e.,  }
mm^\dagger=\delta^2 \id_B.			\label{special}
\end{align}
And then we call $(B,\psi)$ a quantum set.

A quantum set $(B,\psi)$ is said to be \emph{commutative} or \emph{symmetric} (tracial) if $B$ is commutative or $\psi$ is tracial respectively, which are formulated in diagrams as below.
\begin{align*}
	\begin{array}{cc}
	\textrm{commutative} & \textrm{symmetric (tracial)}
	\\
	\begin{sd}
		\coordinate (B1) ;
		\coordinate[right=\hsep of B1] (B2);
		\coordinate[above =1 of $(B1)!0.5!(B2)$] (m1);
		\draw (B1) to[out=90,in=0] (m1) to[out=-180,in=90] (B2) ;
		\draw (m1) --++(0,0.5) ;
	\end{sd}
	=
	\begin{sd}
		\draw (0,0) arc(180:0:\hsep/2) coordinate[midway] (m1) ;
		\draw (m1) --++(0,0.5) ;
	\end{sd}
	&
	\begin{sd}
		\coordinate (B1) ;
		\coordinate[right=\hsep of B1] (B2);
		\coordinate[above =1 of $(B1)!0.5!(B2)$] (m1);
		\draw (B1) to[out=90,in=0] (m1) to[out=-180,in=90] (B2) ;
	\end{sd}
	=
	\begin{sd}
		\draw (0,0) arc(180:0:\hsep/2) coordinate[midway] (m1) ;
	\end{sd}
	\\
	yx=xy
	&
	\psi(yx)=\psi(xy) \mathrlap{\quad \forall x,y \in B}
	\end{array}
\end{align*}
We often use $\tau$ instead of $\psi$ in the tracial case.
\end{dfn}

\begin{rmk}
The notion of $\delta$-form was introduced by Banica \cite{Banica2002quantum}, and Musto et al. \cite{Musto2018compositional} defined quantum sets in the case where $\psi$ is a trace. Finally, Brannan, et al. \cite{Brannan2019bigalois} defined quantum sets as above.
The definition in \cite{Musto2018compositional} is $mm^\dagger=\id_B$, 
which does not have $\delta^2$. This is because the counit is normalized as $\psi(1)=\delta^2=\abs{B}$ in \cite{Musto2018compositional}, whence $m^\dagger$ in \cite{Musto2018compositional} is our $m^\dagger/\delta^2$. Thus these formulations are equivalent.
\end{rmk}

\begin{lem}[{}]\label{lem:classicalset}
A finite set with the uniform probability measure corresponds to a commutative quantum set via Gelfand duality. In particular $\tau=\Tr/n$ is a $\delta=\sqrt{n}$-form on $\C^n$.
\end{lem}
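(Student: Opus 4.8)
The plan is to unwind the two halves of the statement separately: the Gelfand-duality assertion is essentially a standard correspondence restricted to finite dimension, while the ``$\delta$-form'' assertion is a short explicit computation of $mm^\dagger$.

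First I would recall that every finite-dimensional commutative $C^*$-algebra is $*$-isomorphic to $\C^n = C(X)$ for a finite set $X$ with $|X| = n$, namely its spectrum; this is the finite-dimensional instance of Gelfand duality. Under this identification a faithful state $\psi$ on $C(X)$ corresponds to a probability measure $\mu$ on $X$ of full support via $\psi(f) = \sum_{x \in X} f(x)\,\mu(\{x\})$, faithfulness being exactly the positivity $\mu(\{x\}) > 0$ for all $x$; the uniform measure $\mu(\{x\}) = 1/n$ then yields precisely $\tau = \Tr/n$, where $\Tr$ is the sum of coordinates. Since $\C^n$ is commutative the associated quantum set is commutative, and as commutativity forces $\tau(xy) = \tau(yx)$ automatically it is also symmetric (tracial); hence only the special condition (\ref{special}) remains to be verified.

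For that I would fix the minimal projections $e_1, \dots, e_n \in \C^n$, so that $m(e_i \otimes e_j) = \delta_{ij} e_i$ and $\tau(e_i) = 1/n$. With respect to the inner product $\langle x, y \rangle_\tau = \tau(y^* x)$ one computes $\langle e_i, e_j \rangle_\tau = \delta_{ij}/n$, so that $\{f_i := \sqrt{n}\, e_i\}_i$ is an orthonormal basis and $\{f_i \otimes f_j\}_{i,j}$ one for $\C^n \otimes \C^n$. Expanding $m^\dagger$ against this basis through $\langle m^\dagger f_k,\, f_p \otimes f_q \rangle = \langle f_k,\, m(f_p \otimes f_q) \rangle$ gives $m^\dagger(f_k) = \sqrt{n}\, f_k \otimes f_k$, whence $mm^\dagger(f_k) = n\, f_k$. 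Therefore $mm^\dagger = n\,\id_B$, i.e. $\delta^2 = n$ and $\delta = \sqrt{n}$.

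The only place demanding care is the normalization in the last step: the adjoint $m^\dagger$ is taken with respect to the $\tau$-inner product, so the minimal projections $e_i$ are orthogonal with $\norm{e_i}^2 = 1/n$ rather than unit vectors, and treating them as an orthonormal basis would produce the wrong constant. Working consistently with the normalized basis $\{f_i\}$ is what makes the factor $\delta = \sqrt{n}$ emerge, in agreement with the state normalization $\tau(1) = 1$ versus $\Tr(1) = n$ discussed in the preceding remark.
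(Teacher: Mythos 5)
Your proposal is correct and follows essentially the same route as the paper: identify $(X,\mu_{\mathrm{unif}})$ with $(\C^n,\tau=\Tr/n)$ via Gelfand duality, then compute the comultiplication on minimal projections to get $m^\dagger(e_i)= n\, e_i\otimes e_i$ (equivalently $m^\dagger(f_k)=\sqrt{n}\,f_k\otimes f_k$ in your normalized basis) and conclude $mm^\dagger = n\,\id$. The only difference is bookkeeping: you pass to the orthonormal basis $f_i=\sqrt{n}\,e_i$ before taking adjoints, whereas the paper computes $\braket{e_j\otimes e_k\,|\,m^\dagger e_i}_{\tau\otimes\tau}$ directly on the unnormalized matrix units — both yield the same constant $\delta^2=n$.
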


\begin{proof}
Let $X=\{1,...,n\}$ be an $n$-element set with the uniform probability measure $\mu$. 
The pair $(X,\mu)$ corresponds to the commutative $C^*$-algebra $(C(X), \int \cdot d\mu)$ of (continuous) functions on $X$ with a tracial state $\int \cdot d\mu$ via Gelfand duality.
Moreover $(C(X), \int \cdot d\mu)$ is isomorphic to the $n \times n$ diagonal matrix algebra $(\C^n, \tau=\Tr/n)$ with normalized trace 
via $C(X)\ni \delta_i \mapsto e_i \in \C^n$ where $\delta_i$ is the indicator function of $\{i\} \subseteq X$ and $e_i$ is the matrix unit of $(i,i)$ entry.
Since the multiplication $m$ is given by $e_i \otimes e_j \mapsto \delta_{ij} e_i$,
the comultiplication $m^\dagger$ is given by $e_i \mapsto n e_i \otimes e_i$ because
\begin{align*}
\braket{e_j \otimes e_k | m^\dagger e_i}_{\tau \otimes \tau}
&=\braket{m(e_j \otimes e_k) | e_i}_{\tau}
= \tau((e_j e_k )^* e_i)= \frac{1}{n} \delta_{ji} \delta_{ki} 
\\
&= n \braket{e_j|e_i}_\tau \braket{e_k | e_i}_\tau
= \braket{e_j \otimes e_k | n e_i \otimes e_i}_{\tau \otimes \tau}.
\end{align*}
Thus $mm^\dagger e_i = m(n e_i \otimes e_i)=n e_i$, i.e., 
$mm^\dagger = n \ \id_{\C^n}$. 
Therefore $\tau=\Tr/n$ is a $\delta=\sqrt{n}$-form on $\C^n$.
\end{proof}

Although a general quantum set $(B,\psi)$ is not symmetric, 
it satisfies the following equality, 
so-called \emph{balanced symmetric} in Vicary \cite[Definition 3.10]{Vicary2011categorical}:
\begin{align}
	\begin{sd}
		\coordinate (B1) ;
		\coordinate[left=\hsep of B1] (B2) ; 
		\coordinate[above =1/2 of B2] (mB) ;
		\draw (B1)--++(0,1/2) to[out=90,in=90]  (mB)
		to[out=-90,in=-90] ($(B2)!1/2!(mB)+(-\loopdiam,0)$)
		to[out=90,in=90] (B2);
	\end{sd}
	=
	\begin{sd}
		\coordinate (B1) ;
		\coordinate[right=\hsep of B1] (B2);
		\coordinate[above =1 of $(B1)!0.5!(B2)$] (m1);
		\draw (B1) to[out=90,in=0] (m1) to[out=-180,in=90] (B2) ;
	\end{sd}
	=
	\begin{sd}
		\coordinate (B1) ;
		\coordinate[right=\hsep of B1] (B2) ; 
		\coordinate[above =1/2 of B2] (mB) ;
		\draw (B1)--++(0,1/2) to[out=90,in=90]  (mB)
		to[out=-90,in=-90] ($(B2)!1/2!(mB)+(\loopdiam,0)$)
		to[out=90,in=90] (B2);
	\end{sd}.
	\label{balancedsym}
\end{align}
This directly follows from the snake equation (\ref{snakeeq}) as
\[
	\begin{sd}
		\coordinate (B1) ;
		\coordinate[left=\hsep of B1] (B2) ; 
		\coordinate[above =1/2 of B2] (mB) ;
		\draw (B1)--++(0,1/2) to[out=90,in=90]  (mB)
		to[out=-90,in=-90] ($(B2)!1/2!(mB)+(-\loopdiam,0)$)
		to[out=90,in=90] (B2);
	\end{sd}
	=
	\begin{sd}
		\coordinate (B1) ;
		\coordinate[left=\hsep of B1] (B2) ; 
		\coordinate[above =1/2 of B1] (mB1) ;
		\coordinate[above =1/2 of B2] (mB2) ;
		\draw (B1)--(mB1)   (B2)--(mB2) arc(0:180:\loopdiam/2) coordinate (mB3);
		\draw (mB3) to[out=-90,in=-90] ($(mB3)!1/2!(mB1)$) 
				to[out=90,in=90] (mB1);
	\end{sd}
	\overset{\textrm{(\ref{snakeeq})}}{=}
	\begin{sd}
		\coordinate (B1) ;
		\coordinate[right=\hsep of B1] (B2);
		\coordinate[above =1 of $(B1)!0.5!(B2)$] (m1);
		\draw (B1) to[out=90,in=0] (m1) to[out=-180,in=90] (B2) ;
	\end{sd}.
\]
Thus topological deformations through a cusp are not allowed in nontracial cases, while they are allowed in the tracial case.

Put $B=\bigoplus_s M_{n_s}$ and 
$\psi = \Tr(Q \ \cdot)=\bigoplus_s \Tr_s(Q_s \ \cdot)$,
where $\Tr=\bigoplus_s \Tr_s$ is the canonical unnormalized trace given by the sum of diagonal entries or eigenvalues, and $Q=\bigoplus_s Q_s \in B$.
Note that $Q$ is positive definite and $\Tr(Q)=\sum_s \Tr_s(Q_s) =1$ if and only if $\psi$ is a faithful state. 
Since positive matrices are unitarily diagonalizable, we may assume that $Q$ is diagonal.

Let $e_{ij,s}$ be the matrix unit of $(i,j)$ entry of $s$-th direct summand $M_{n_s}\subseteq B$,
i.e., the matrix with entries $0$ except for $(i,j)$ entry $1$ of $s$-th direct summand.

\begin{lem}\label{lem:ONBinB}
$\{\tilde{e_{ij,s}} \coloneqq e_{ij,s}Q_s^{-1/2} \mid i,j \leq n_s, s  \}$ forms an ONB for $L^2(B,\psi)$.
\end{lem}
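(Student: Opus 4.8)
The plan is to verify directly that the proposed family is orthonormal and then finish by a dimension count. First I would record that $\psi=\bigoplus_s\Tr_s(Q_s\,\cdot)$ respects the block decomposition $B=\bigoplus_s M_{n_s}$, so elements supported in distinct summands are automatically orthogonal: if $s\neq t$, then $(\tilde{e_{ij,s}})^*\tilde{e_{kl,t}}$ is a product of elements living in different direct summands, hence $0$, and applying $\psi$ gives $0$. This reduces the entire computation to a single block, so I may fix $s$ and suppress it from the notation.

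Next I would exploit the standing assumption (justified just before the statement) that $Q$, and hence each $Q_s$, is diagonal, say $Q_s=\mathrm{diag}(q_1,\dots,q_{n_s})$ with all $q_i>0$. Since right multiplication by a diagonal matrix scales columns, this collapses the normalization to a single scalar:
\begin{align*}
\tilde{e_{ij,s}}=e_{ij,s}Q_s^{-1/2}=q_j^{-1/2}\,e_{ij,s}.
\end{align*}
This is the one place where the diagonality of $Q$ really does the work.

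Then I would compute the inner product. Using $e_{ij,s}^*=e_{ji,s}$, the matrix-unit rule $e_{ji,s}e_{kl,s}=\delta_{ik}e_{jl,s}$, and the evaluation $\psi(e_{jl,s})=\Tr_s(Q_se_{jl,s})=q_j\delta_{jl}$ (left multiplication by the diagonal $Q_s$ scales rows), I obtain
\begin{align*}
\braket{\tilde{e_{ij,s}}|\tilde{e_{kl,s}}}_\psi
=\psi\big((\tilde{e_{ij,s}})^*\tilde{e_{kl,s}}\big)
=q_j^{-1/2}q_l^{-1/2}\,\delta_{ik}\,\psi(e_{jl,s})
=\delta_{ik}\delta_{jl},
\end{align*}
because on the support $j=l$ the scalar $q_j^{-1/2}q_l^{-1/2}q_j$ equals $1$. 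Combined with the cross-block vanishing from the first step, this shows the family is orthonormal.

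Finally I would close by counting: the family has $\sum_s n_s^2=\dim_{\C}B=\dim_{\C}L^2(B,\psi)$ members, and an orthonormal set of full cardinality in a finite dimensional inner product space is automatically a basis. The only delicate point is the index bookkeeping in the third step — keeping straight that right multiplication by $Q_s^{-1/2}$ acts on columns while left multiplication by $Q_s$ inside $\psi$ acts on rows — but once the scalar in the last display is pinned down, orthonormality is immediate, so I do not expect any genuine obstacle.
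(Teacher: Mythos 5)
Your proof is correct and follows essentially the same route as the paper's: a direct verification that $\braket{\tilde{e_{ij,s}}|\tilde{e_{kl,r}}}_\psi=\delta_{ij,s}^{kl,r}$, with spanning then immediate from the count $\sum_s n_s^2=\dim B$. The only cosmetic difference is that you invoke the standing diagonality of $Q$ to write $\tilde{e_{ij,s}}=q_j^{-1/2}e_{ij,s}$ and compute scalars entrywise, whereas the paper absorbs the factors $Q_s^{-1/2}$ via cyclicity of $\Tr$ and the fact that $\{e_{ij,s}\}$ is an ONB for $L^2(B,\Tr)$, so its computation does not even need $Q$ to be diagonal; both arguments are valid.
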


\begin{proof}
Since $\{e_{ij,s} \mid i,j \leq n_s, s  \}$ forms an ONB for $L^2(B,\Tr)$, 
we have
\[
\braket{e_{kl,r}Q_r^{-1/2} | e_{ij,s}Q_s^{-1/2}}_\psi
= \Tr(Q(e_{kl,r}Q_r^{-1/2})^* e_{ij,s}Q_s^{-1/2})
= \Tr(e_{kl,r}^* e_{ij,s})=\delta_{ij,s}^{kl,r}
\]
where $\delta_{ij,s}^{kl,r}\coloneqq \begin{cases}1 &\textrm{if }(i,j,s)=(k,l,r) \\
0 &\textrm{otherwise} \end{cases}$.
\end{proof}

We sometimes describe operators with respect to the basis 
$\{\tilde{e_{ij,s}} = e_{ij,s}Q_s^{-1/2} \}_{ijs}$ as indicated below. 

\begin{lem}\label{lem:1psimm*} 
\begin{itemize}
\item 
$1=\sum_{ijs} (Q_s^{1/2})_{ij}\tilde{e_{ij,s}}$,

\item
$\psi: \tilde{e_{ij,s}} \mapsto (Q_s^{1/2})_{ji}$.

\item
$m: \tilde{e_{ij,s}} \otimes \tilde{e_{kl,r}} \mapsto 
 \delta_{rs} (Q_s^{-1/2})_{jk} \tilde{e_{il,r}}$.

\item
$m^\dagger: \tilde{e_{ij,s}} \mapsto 
\sum_{u,v\leq n_s} (Q_s^{-1/2})_{vu} \tilde{e_{iu,s}} \otimes \tilde{e_{vj,s}}$.
\end{itemize}
\end{lem}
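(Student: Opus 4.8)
The plan is to use that $\{\tilde{e_{ij,s}}\}_{ijs}$ is an orthonormal basis for $L^2(B,\psi)$ by Lemma~\ref{lem:ONBinB}, so that each of the four operators is completely determined by its matrix coefficients against this basis, and to reduce every such coefficient to an elementary matrix-unit computation. Throughout I would use that $Q=\bigoplus_s Q_s$ is diagonal, so that $(Q_s^{\pm1/2})_{ab}=\delta_{ab}(Q_s^{\pm1/2})_{aa}$, together with the single bookkeeping identity $\Tr_s(Q_s^{1/2}e_{ij,s})=(Q_s^{1/2})_{ji}$, which follows by writing out the diagonal entries of $Q_s^{1/2}e_{ij,s}$.

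For the unit I would expand $1=\sum_{ijs}c_{ij,s}\tilde{e_{ij,s}}$ and read off the coefficient by orthonormality as $c_{ij,s}=\braket{\tilde{e_{ij,s}}|1}_\psi=\psi(\tilde{e_{ij,s}}^{\,*})$; since $\tilde{e_{ij,s}}^{\,*}=Q_s^{-1/2}e_{ji,s}$, the formula $\psi=\Tr(Q\,\cdot)$ gives $c_{ij,s}=\Tr_s(Q_s^{1/2}e_{ji,s})=(Q_s^{1/2})_{ij}$. The counit is even more direct: $\psi(\tilde{e_{ij,s}})=\Tr_s(Q_s e_{ij,s}Q_s^{-1/2})$, and cyclicity of the trace turns this into $\Tr_s(Q_s^{1/2}e_{ij,s})=(Q_s^{1/2})_{ji}$. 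The transposition of indices between the two formulas is exactly the effect of the conjugation in the unit versus the cyclic shift in the counit, and becomes invisible once one recalls that $Q$ is diagonal.

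For the multiplication I would simply compute the product in $B$. Distinct blocks annihilate each other, yielding the factor $\delta_{rs}$, and within a single block $e_{ij,s}Q_s^{-1/2}e_{kl,s}Q_s^{-1/2}=(Q_s^{-1/2})_{jj}\,\delta_{jk}\,e_{il,s}Q_s^{-1/2}=(Q_s^{-1/2})_{jk}\,\tilde{e_{il,s}}$, using that right multiplication by the diagonal $Q_s^{-1/2}$ scales columns and that $e_{il,s}Q_s^{-1/2}=\tilde{e_{il,s}}$ is itself a basis vector; this is precisely $\delta_{rs}(Q_s^{-1/2})_{jk}\tilde{e_{il,r}}$.

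The comultiplication is the one step genuinely requiring care, and is where I expect the only real bookkeeping. Rather than compute $m^\dagger$ from scratch, I would obtain it as the Hilbert adjoint of $m$: expanding $m^\dagger\tilde{e_{ij,s}}$ in the product basis and using orthonormality, the coefficient of $\tilde{e_{kl,r}}\otimes\tilde{e_{pq,t}}$ equals $\braket{\tilde{e_{kl,r}}\otimes\tilde{e_{pq,t}}|m^\dagger\tilde{e_{ij,s}}}_{\psi\otimes\psi}=\braket{m(\tilde{e_{kl,r}}\otimes\tilde{e_{pq,t}})|\tilde{e_{ij,s}}}_\psi$. Substituting the multiplication formula just established, this is $\delta_{rt}(Q_r^{-1/2})_{lp}\,\delta_{ij,s}^{kq,r}$, which forces $r=t=s$, $k=i$, $q=j$ and leaves $(Q_s^{-1/2})_{lp}$. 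Reindexing $l\mapsto u$, $p\mapsto v$ then gives $m^\dagger\tilde{e_{ij,s}}=\sum_{u,v\le n_s}(Q_s^{-1/2})_{vu}\tilde{e_{iu,s}}\otimes\tilde{e_{vj,s}}$, where the transpose $(Q_s^{-1/2})_{lp}=(Q_s^{-1/2})_{vu}$ is permissible because $Q_s^{-1/2}$ is diagonal. The main thing to watch is the simultaneous tracking of the three index pairs and the correct slot in the adjoint relation (recalling which argument of $\braket{\cdot|\cdot}$ is conjugate-linear); all entries of $Q^{\pm1/2}$ being real and diagonal keeps the conjugations and transpositions harmless.
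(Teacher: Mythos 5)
Your proposal is correct and follows essentially the same route as the paper: coefficients of $1$ extracted against the ONB, direct evaluation of $\psi$ and of the product of basis vectors, and $m^\dagger$ recovered from the adjoint relation $\braket{\,\cdot\,|m^\dagger \tilde{e_{ij,s}}}=\braket{m(\cdot)|\tilde{e_{ij,s}}}$ with the multiplication formula. The only cosmetic difference is in the final bookkeeping: where you invoke diagonality of $Q$ (legitimate, since the paper assumes $Q$ diagonal) to drop conjugates and transpose indices, the paper instead uses self-adjointness of $Q_s^{-1/2}$, writing $\ol{(Q_s^{-1/2})_{uv}}=(Q_s^{-1/2})_{vu}$, which would remain valid even for non-diagonal $Q$.
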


\begin{proof}
Simple computations show $\braket{\tilde{e_{ij,s}} | 1}=\Tr(Q_s^{1/2}e_{ji,s})=(Q_s^{1/2})_{ij}$,
$\psi(\tilde{e_{ij,s}})=\Tr(Q_s^{1/2}e_{ij,s})=(Q_s^{1/2})_{ji}$,
$\tilde{e_{ij,s}} \tilde{e_{kl,r}} = e_{ij,s}Q_s^{-1/2} e_{kl,r}Q_r^{-1/2}
=\delta_{rs} (Q_s^{-1/2})_{jk} \tilde{e_{il,r}}$,
and
\begin{align*}
	\braket{\tilde{e_{ku,r}} \otimes \tilde{e_{vl,r}}|m^\dagger \tilde{e_{ij,s}}}_{\psi \otimes \psi}
	&=\braket{\tilde{e_{ku,r}}\tilde{e_{vl,r}} | \tilde{e_{ij,s}}}_\psi
	=\braket{(Q_s^{-1/2})_{uv} \tilde{e_{kl,r}} | \tilde{e_{ij,s}}}_\psi
	\\
	&=\ol{(Q_s^{-1/2})_{uv}} \delta_{ij,s}^{kl,r} 
	= (Q_s^{-1/2})_{vu} \delta_{ij,s}^{kl,r}.
\end{align*}
\end{proof}

\begin{rmk}
Brannan et al. \cite[Lemma 3.2]{Brannan2020quantum} uses another unnormalized orthogonal basis 
$\{f_{ij,s} \coloneqq Q_s^{-1/2}e_{ij,s}Q_s^{-1/2}\}$
for diagonal $Q$ in order to simplify the expression of $m^\dagger$ and prevent the square root $Q^{1/2}$ from appearing in the coefficients above.
In this paper, we choose $\{\tilde{e_{ij,s}}\}$ because we later use matrix expressions of operators with respect to this ONB to compute quantum automorphism groups.
\end{rmk}

\begin{prop}[{Banica \cite[section 1]{Banica2002quantum}}]\label{prop:deltaform}
In this terminology, $\psi$ is a $\delta$-form on $B$ 
if and only if $\Tr_s(Q_s^{-1})=\delta^2$ holds for all indices $s$.
\end{prop}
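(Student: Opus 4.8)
The plan is to reduce the statement to a single eigenvalue computation for the operator $mm^\dagger$ on the orthonormal basis $\{\tilde{e_{ij,s}}\}$ of Lemma \ref{lem:ONBinB}. Since the $\delta$-form condition (\ref{special}) is precisely $mm^\dagger = \delta^2 \id_B$, it is enough to evaluate $mm^\dagger \tilde{e_{ij,s}}$ using the explicit formulas for $m$ and $m^\dagger$ recorded in Lemma \ref{lem:1psimm*}, and then to determine exactly when the result is $\delta^2$ times the input for every basis vector.

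Concretely, I would first apply the comultiplication to get $m^\dagger \tilde{e_{ij,s}} = \sum_{u,v \leq n_s} (Q_s^{-1/2})_{vu}\, \tilde{e_{iu,s}} \otimes \tilde{e_{vj,s}}$, and then apply $m$ termwise. Because both tensor factors lie in the same summand $s$, the multiplication formula sends $\tilde{e_{iu,s}} \otimes \tilde{e_{vj,s}}$ to $(Q_s^{-1/2})_{uv}\, \tilde{e_{ij,s}}$, preserving the outer indices $i,j$. Summing, one obtains $mm^\dagger \tilde{e_{ij,s}} = \big( \sum_{u,v \leq n_s} (Q_s^{-1/2})_{vu} (Q_s^{-1/2})_{uv} \big) \tilde{e_{ij,s}}$, so $mm^\dagger$ acts on the whole $s$-th block as a single scalar.

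The key step is to identify that scalar: $\sum_{u,v} (Q_s^{-1/2})_{vu}(Q_s^{-1/2})_{uv} = \Tr_s\big((Q_s^{-1/2})^2\big) = \Tr_s(Q_s^{-1})$, which holds for any $Q_s$ and collapses immediately to $\sum_u (Q_s^{-1})_{uu}$ in the diagonal case under discussion. Hence $mm^\dagger = \delta^2 \id_B$ if and only if $\Tr_s(Q_s^{-1}) = \delta^2$ on each block $s$, which is the asserted equivalence. There is no genuine obstacle beyond careful index bookkeeping when composing the two formulas of Lemma \ref{lem:1psimm*}; the one point worth emphasizing is that $m$ forces the two incoming tensor legs into a common summand, so distinct blocks never mix and the eigenvalue is determined blockwise.
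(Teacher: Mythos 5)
Your proposal is correct and follows essentially the same route as the paper: apply the explicit formulas of Lemma \ref{lem:1psimm*} to compute $mm^\dagger \tilde{e_{ij,s}} = \Tr_s(Q_s^{-1})\,\tilde{e_{ij,s}}$ blockwise, then compare with the $\delta$-form condition $mm^\dagger = \delta^2\id_B$. The index bookkeeping, including the observation that $\sum_{u,v}(Q_s^{-1/2})_{vu}(Q_s^{-1/2})_{uv} = \Tr_s(Q_s^{-1})$, matches the paper's computation exactly.
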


\begin{proof}
By Lemma \ref{lem:1psimm*}, $\psi$ is a $\delta$-form 
if and only if it holds for all $i,j,s$ that
\begin{align*}
\delta^2 \tilde{e_{ijs}}&=m m^\dagger \tilde{e_{ijs}}
=m \sum_{u,v\leq n_s} (Q_s^{-1/2})_{vu} \tilde{e_{iu,s}} \otimes \tilde{e_{vj,s}}
\\
&=\sum_{u,v\leq n_s} (Q_s^{-1/2})_{vu} (Q_s^{-1/2})_{uv} \tilde{e_{ij,s}}
\\
&=\sum_{v\leq n_s} (Q_s^{-1})_{vv} \tilde{e_{ij,s}}
=\Tr_s(Q_s^{-1})\tilde{e_{ij,s}},
\end{align*}
i.e., $\Tr_s(Q_s^{-1})=\delta^2$ for all $s$.
\end{proof}

\begin{lem}\label{balanceloop}
We have $\begin{sd}
	\coordinate (B1) ;
	\coordinate[above =1/2 of B1] (B1a) ;
	\draw (B1) to[out=90,in=90] ($(B1)!1/2!(B1a)+(-\loopdiam,0)$)
	to[out=-90,in=-90] (B1a);
\end{sd}
=Q^{-1} (\cdot) Q$
and
$\begin{sd}
	\coordinate (B1) ;
	\coordinate[above =1/2 of B1] (B1a) ;
	\draw (B1) to[out=90,in=90] ($(B1)!1/2!(B1a)+(\loopdiam,0)$)
	to[out=-90,in=-90] (B1a);
\end{sd}
=Q (\cdot) Q^{-1}$.
\end{lem}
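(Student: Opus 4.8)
The plan is to read each loop as a composition of the cup $m^\dagger 1$ and the cap $\psi m$, and then reduce everything to the snake equation \eqnref{snakeeq} by cyclicity of the canonical trace. Write the cup as $m^\dagger 1 = \sum_k a_k \otimes b_k \in B \otimes B$. The cusp-free snake $(\psi m \otimes \id)(\id \otimes m^\dagger 1)$ sends $x \mapsto \sum_k \psi(x a_k) b_k$ and equals $\id_B$ by \eqnref{snakeeq}, and likewise $(\id \otimes \psi m)(m^\dagger 1 \otimes \id)$ sends $x \mapsto \sum_k \psi(b_k x) a_k = x$. The left-bulging loop differs from the first snake only in that the cusp feeds the input strand into the \emph{other} slot of the cap $\psi m$, so that it is the map $x \mapsto \sum_k \psi(a_k x) b_k$; symmetrically the right-bulging loop is $x \mapsto \sum_k \psi(x b_k) a_k$. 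Pinning down this slot assignment — equivalently, recognizing that routing a strand through a cusp reverses the order of multiplication inside the cap — is the one genuinely nontracial point, and is exactly the failure of deformation-through-a-cusp flagged after \eqnref{balancedsym}; this is the step I expect to demand the most care.

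Granting the identification, the remainder is two lines. Since $\psi = \Tr(Q \, \cdot \,)$ and $\Tr$ is cyclic, for every $x$ and every $k$ we have $\psi(a_k x) = \Tr(Q a_k x) = \Tr(x Q a_k) = \Tr(Q (Q^{-1} x Q) a_k) = \psi((Q^{-1} x Q) a_k)$. Substituting $y = Q^{-1} x Q$ into the first snake identity $\sum_k \psi(y a_k) b_k = y$ shows that the left loop is $x \mapsto Q^{-1} x Q$. For the right loop the same cyclicity gives $\psi(x b_k) = \Tr(Q x b_k) = \Tr(b_k Q x) = \Tr(Q b_k (Q x Q^{-1})) = \psi(b_k (Q x Q^{-1}))$, so feeding $y = Q x Q^{-1}$ into the second snake $\sum_k \psi(b_k y) a_k = y$ shows the right loop is $x \mapsto Q x Q^{-1}$.

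As an independent check — one that also fixes the orientation of the cusp without appeal to the picture — I would evaluate both candidate formulas on the orthonormal basis $\{\tilde{e_{ij,s}}\}$. Feeding the explicit cup and counit from Lemma \ref{lem:1psimm*} into $\sum_k \psi(a_k \tilde{e_{ab,t}}) b_k$ and simplifying, the $Q_s^{\pm 1/2}$ factors telescope and leave the coefficient $(Q_t)_{bb}(Q_t)_{aa}^{-1}$ for diagonal $Q$, which is precisely the coefficient of $Q^{-1} \tilde{e_{ab,t}} Q$; the analogous computation handles the right loop. This route is longer, so I would keep the snake argument as the main proof and use the basis computation only to confirm that the left loop, rather than the right, carries $Q^{-1} (\cdot) Q$.
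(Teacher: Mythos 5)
Your proof is correct. Your reading of the diagrams is right: writing the cup as $m^\dagger 1=\sum_k a_k\otimes b_k$, the left-bulging loop is $x\mapsto\sum_k\psi(a_k x)\,b_k$ and the right-bulging one is $x\mapsto\sum_k\psi(x b_k)\,a_k$, and your cyclicity step followed by substitution into \eqnref{snakeeq} closes the argument. The paper's own proof uses the same two ingredients (cyclicity of $\Tr$ and the snake equation) but organizes them as a nondegeneracy argument rather than a direct computation: it pairs the unknown loop against an arbitrary second strand via the balanced-symmetry identity \eqnref{balancedsym} --- which is exactly your slot-swap observation in pairing form, $\psi(\mathrm{loop}(x)\,y)=\psi(yx)$ --- then rewrites $\psi(yx)=\psi\bigl((Q^{-1}xQ)\,y\bigr)=\psi\bigl(x\,(QyQ^{-1})\bigr)$ by cyclicity of the trace, and finally cancels the pairing using faithfulness of $\psi$. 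You never need that cancellation step: expanding the cup and collapsing with the snake identity computes the loop outright, and faithfulness enters your argument only through the invertibility of $Q$. The trade-off is mild: the paper's route is shorter on the page because \eqnref{balancedsym} was established immediately before the lemma, whereas yours is self-contained modulo \eqnref{snakeeq} and makes the reversal of multiplication order at the cusp fully explicit instead of leaving it packaged inside \eqnref{balancedsym}. Your optional basis cross-check is also consistent --- for diagonal $Q$ the left loop scales $\tilde{e_{ab,t}}$ by $(Q_t)_{aa}^{-1}(Q_t)_{bb}$, the coefficient of $Q^{-1}\tilde{e_{ab,t}}Q$ --- but it is not needed for the proof.
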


\begin{proof}
It holds for $x,y \in B$ that
\begin{align*}
\psi(yx)&=\Tr(Qyx) = \Tr(xQy) = \psi(Q^{-1}xQy) = \psi(xQyQ^{-1})
\\
	i.e., &\begin{sd}
		\node[circle,draw] (B1) {$x$};
		\node[circle,draw,right=\hsep/2 of B1] (B2) {$y$};
		\coordinate[above =1 of $(B1)!0.5!(B2)$] (m1);
		\draw (B1) to[out=90,in=0] (m1) to[out=-180,in=90] (B2) ;
	\end{sd}
	=
	\begin{sd}
		\node[circle,draw] (B1) {$y$};
		\node[draw,left=\hsep/2 of B1] (B2) {$Q^{-1}xQ$}; 
		\draw (B1) to[out=90,in=90] (B2);
	\end{sd}
	=
	\begin{sd}
		\node[draw] (B1) {$QyQ^{-1}$};
		\node[circle,draw,left=\hsep/2 of B1] (B2) {$x$}; 
		\draw (B1) to[out=90,in=90] (B2);
	\end{sd}.
\end{align*}
Comparing above with (\ref{balancedsym}), we obtain
$\begin{sd}
	\coordinate (B1) ;
	\coordinate[above =1/2 of B1] (B1a) ;
	\draw (B1) to[out=90,in=90] ($(B1)!1/2!(B1a)+(-\loopdiam,0)$)
	to[out=-90,in=-90] (B1a);
\end{sd}
=Q^{-1} (\cdot) Q$
and
$\begin{sd}
	\coordinate (B1) ;
	\coordinate[above =1/2 of B1] (B1a) ;
	\draw (B1) to[out=90,in=90] ($(B1)!1/2!(B1a)+(\loopdiam,0)$)
	to[out=-90,in=-90] (B1a);
\end{sd}
=Q (\cdot) Q^{-1}$
by the faithfulness of $\psi$.
\end{proof}

\begin{lem}
A $\delta$-form $\psi$ on $B$ satisfies $\delta^2 \geq \abs{B}=\dim B$,
with equality if and only if $\psi$ is tracial. 
\end{lem}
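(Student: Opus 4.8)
The plan is to reduce the statement to an elementary spectral inequality, using the block decomposition $B=\bigoplus_s M_{n_s}$ together with the characterization of $\delta$-forms from Proposition \ref{prop:deltaform}. Writing $\psi=\Tr(Q\,\cdot)$ with $Q=\bigoplus_s Q_s$ positive definite and $\Tr(Q)=\sum_s \Tr_s(Q_s)=1$, Proposition \ref{prop:deltaform} tells us that being a $\delta$-form is exactly the condition $\Tr_s(Q_s^{-1})=\delta^2$ for every $s$. The key observation is that for a positive definite matrix, the product of its trace and the trace of its inverse is bounded below by the square of its size.

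Concretely, first I would record the matrix Cauchy--Schwarz (equivalently AM--HM) inequality: if $A$ is a positive definite $n\times n$ matrix with eigenvalues $\lambda_1,\dots,\lambda_n>0$, then
\begin{align*}
\Tr(A)\,\Tr(A^{-1})=\Big(\sum_i \lambda_i\Big)\Big(\sum_i \lambda_i^{-1}\Big)\geq n^2,
\end{align*}
with equality if and only if all the $\lambda_i$ coincide, i.e. $A$ is a scalar multiple of the identity. Applying this blockwise to $A=Q_s$ and using the $\delta$-form condition $\Tr_s(Q_s^{-1})=\delta^2$ yields $\delta^2\,\Tr_s(Q_s)\geq n_s^2$ for each $s$. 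Summing over $s$ and invoking the normalization $\sum_s \Tr_s(Q_s)=\Tr(Q)=1$ gives
\begin{align*}
\delta^2=\delta^2\sum_s \Tr_s(Q_s)\geq \sum_s n_s^2=\dim B=\abs{B},
\end{align*}
which is the desired inequality.

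For the equality case I would track the equality condition through each step. Equality in the summed inequality forces the blockwise Cauchy--Schwarz equality for every $s$, hence $Q_s=c_s I_{n_s}$ for scalars $c_s>0$; conversely such $Q$ plainly gives equality throughout. It then remains to identify this equality locus with traciality: since $\Tr\big((Qy-yQ)x\big)=\psi(yx)-\psi(xy)$ and the trace pairing on $B$ is nondegenerate, $\psi$ is tracial precisely when $Q$ commutes with all of $B$, i.e. when $Q$ lies in the center $\bigoplus_s \C I_{n_s}$, which (as $Q$ is diagonal) means each $Q_s$ is scalar. This matches the diagrammatic picture in Lemma \ref{balanceloop}, where the balanced-symmetric loops collapse to the identity exactly when $Q$ is central.

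The inequality itself is routine once Proposition \ref{prop:deltaform} is in hand; the one point deserving care is the final identification of the Cauchy--Schwarz equality condition ($Q$ scalar in each block) with the traciality of $\psi$ ($Q$ central). That step relies only on nondegeneracy of the trace pairing, so I do not anticipate a genuine obstacle, merely the need to state the center of $B$ explicitly and to record that the blockwise equalities can be imposed simultaneously.
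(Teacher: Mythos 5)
Your proposal is correct and is essentially the paper's own argument: both prove $\Tr_s(Q_s)\Tr_s(Q_s^{-1})\geq n_s^2$ blockwise (the paper via Cauchy--Schwarz applied to $Q_s^{\pm 1/2}$ with respect to $\Tr_s$, you via the equivalent eigenvalue/AM--HM form), invoke Proposition \ref{prop:deltaform} and the normalization $\Tr(Q)=1$, and sum over blocks, with equality forcing each $Q_s$ to be scalar. The only difference is cosmetic: you spell out the identification of ``each $Q_s$ scalar'' with traciality via centrality of $Q$, which the paper simply asserts.
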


\begin{proof}
For $\psi= \bigoplus_s \Tr_s(Q_s \ \cdot)$ as above, 
Cauchy-Schwarz inequality with respect to $\Tr_s$ gives us
\[
n_s^2 = (\Tr_s(Q_s^{-1/2} Q_s^{1/2}))^2
\leq \Tr_s(Q_s^{-1}) \Tr_s(Q_s)
\overset{(\textrm{Proposition \ref{prop:deltaform}})}{=} \delta^2 \Tr_s(Q_s).
\]
Hence $1=\Tr(Q) \geq \sum_s n_s^2 /\delta^2=\abs{B}/\delta^2$ shows 
$\delta^2 \geq \abs{B}$, with equality if and only if 
\[
Q_s^{1/2}=q_s Q_s^{-1/2}
\iff Q_s=q_s 1_s
\]
for some constant $q_s$ for every $s$, i.e., $\psi$ is tracial.
\end{proof}

\begin{prop}[{Banica \cite[Proposition 2.1]{Banica1999symmetries}}]
There exists a unique tracial $\delta$-form $\tau$ on $B$,
and then $\delta^2=\abs{B}$. 
$\tau$ is explicitly given by $\tau=\bigoplus_s \frac{n_s}{\abs{B}}\Tr_s$
with $Q_s=\frac{n_s}{\abs{B}}1_s$. 
Moreover $\tau$ is the so-called \emph{Plancherel trace}, the restriction of the unique tracial state of $B(L^2(B,\psi))$ via left regular representation $B \hookrightarrow B(L^2(B,\psi))$.
\end{prop}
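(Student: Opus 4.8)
The plan is to reduce the whole statement to the preceding lemma together with Proposition \ref{prop:deltaform}, so that what remains is a short arithmetic determination of the scalars $q_s$. Throughout I write a $\delta$-form as $\psi = \Tr(Q\,\cdot)$ with $Q = \bigoplus_s Q_s$ positive definite and $\Tr(Q) = 1$; by Proposition \ref{prop:deltaform} the $\delta$-form condition is exactly $\Tr_s(Q_s^{-1}) = \delta^2$ for every $s$.

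For uniqueness I would first treat the tracial hypothesis. Suppose $\tau$ is a tracial $\delta$-form. The preceding lemma already tells us that, for a $\delta$-form, traciality is equivalent to $Q_s = q_s 1_s$ for scalars $q_s > 0$, and that this equality case forces $\delta^2 = \abs{B}$. If one prefers a self-contained version of that input, traciality of $\tau|_{M_{n_s}}$ forces $Q_s$ to be a scalar multiple of $1_s$, since a tracial functional on a matrix algebra is a scalar multiple of $\Tr_s$; this is the single conceptual ingredient of the proof. Given $Q_s = q_s 1_s$, the $\delta$-form condition reads $\Tr_s(Q_s^{-1}) = n_s/q_s = \delta^2$, so $q_s = n_s/\delta^2$; and the normalization $\Tr(Q) = \sum_s n_s q_s = \sum_s n_s^2/\delta^2 = \abs{B}/\delta^2 = 1$ pins down $\delta^2 = \abs{B}$ and hence $q_s = n_s/\abs{B}$. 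This leaves no freedom, so $\tau$ is unique and has the claimed form.

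For existence I would simply verify that the candidate $Q_s = \tfrac{n_s}{\abs{B}} 1_s$ works. It is positive definite with $\Tr(Q) = \sum_s n_s \cdot \tfrac{n_s}{\abs{B}} = 1$, so $\tau = \bigoplus_s \tfrac{n_s}{\abs{B}} \Tr_s$ is a faithful state; it is tracial because each $Q_s$ is scalar, making $\tau|_{M_{n_s}}$ proportional to $\Tr_s$; and $\Tr_s(Q_s^{-1}) = \tfrac{\abs{B}}{n_s} \cdot n_s = \abs{B}$ is independent of $s$, so Proposition \ref{prop:deltaform} certifies that $\tau$ is a $\delta$-form with $\delta^2 = \abs{B}$.

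I do not anticipate any serious obstacle. The only non-formal step is that traciality forces each $Q_s$ to be a scalar, and this is already packaged in the equality analysis of the preceding lemma; everything else is the bookkeeping that the $\delta$-form condition and the state normalization jointly determine both the constants $q_s$ and the value of $\delta^2$.
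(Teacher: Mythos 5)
Your proof is correct and follows essentially the same route as the paper: traciality forces each $Q_s$ to be a scalar multiple of $1_s$, and then Proposition \ref{prop:deltaform} together with the normalization $\Tr(Q)=1$ pins down $q_s=n_s/\abs{B}$ and $\delta^2=\abs{B}$. The only difference is that you also spell out the existence verification for the candidate $\tau=\bigoplus_s \frac{n_s}{\abs{B}}\Tr_s$, which the paper's proof leaves implicit.
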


\begin{proof}
Let $\tau=\bigoplus_s \Tr_s(Q_s \ \cdot)$ be 
a tracial $\delta$-form on $B=\bigoplus_s M_{n_s}$. 
Traciality implies $Q_s=q_s 1_s$ for some $q_s>0$ for each $s$, 
and hence $\delta^2=\Tr_s(Q_s^{-1})=q_s^{-1} n_s$ by Proposition \ref{prop:deltaform}.
Then 
\[
1=\tau(1)=\sum_s \Tr_s(q_s 1_s)=\sum_s q_s n_s=\sum_s n_s^2 / \delta^2 
=\frac{\abs{B}}{\delta^2},
\]
therefore we have $\delta^2=\abs{B}$ and $Q_s=\frac{n_s}{\abs{B}}1_s$.
\cite[Proposition 2.1]{Banica1999symmetries} states that 
a tracial state $\tau$ satisfies $m m^\dagger=\delta^2 \id$ 
if and only if $\tau$ is the ristriction of the unique tracial state of $B(L^2(B,\psi))$.
\end{proof}

\begin{rmk}
Since commutativity $xy=yx$ implies traciality $\tau(xy)=\tau(yx)$, a commutative quantum set is the pair $(\C^n,\tau)$
of an $n\times n$ diagonal matrix algebra $\C^n$ and its normalized trace $\tau=\Tr/n$, which corresponds to the pair of an $n$-element set and the uniform probability measure as in Lemma \ref{lem:classicalset}.
\end{rmk}

In string diagram notation, involution and adjoint are related via twisted wires.
The equality $x^\dagger=\bra{x}=\psi(x^* \cdot)=\psi m(x^* \otimes \id_B )$ shows the identity
\begin{align}
\begin{sd}
\node[circle,draw] (x`) {$x^\dagger$};
\draw (x`)--++(0,-1);
\end{sd}
=
\begin{sd}
\node[circle,draw] (x*) {$x^*$};
\draw (x`.north) arc(180:0:\hsep/3)--++(0,-1);
\end{sd}
,\textrm{ hence }
\begin{sd}
\node[circle,draw] (x`) {$x^\dagger$};
\draw (x`.south) arc(-180:0:\hsep/3)--++(0,1);
\end{sd}
=
\begin{sd}
\node[circle,draw] (x*) {$x^*$};
\draw (x`) --++(0,1);
\end{sd}. \label{dagger-*}
\end{align}
This gives a characterization of $*$-preserving (also called real) operators in terms of string diagrams. 

\begin{lem}\label{lem:*-pres}
Let $(B,\psi)$ be a quantum set. Then an operator $f:B \to B$ is $*$-preserving
if and only if the following equality holds:
\begin{align}
\begin{sd}
\node[circle,draw] (f) {$f^\dagger$};
\draw (f.south) arc(-180:0:\hsep/3) --++(0,1)
		(f.north) arc(0:180:\hsep/3) --++(0,-1);
\end{sd}
=
\begin{sd}
\node[circle,draw] (f) {$f$};
\draw (f) --++(0,-1/2)
		(f)  --++(0,1/2);
\end{sd}.			\label{eqn:f*-pres}
\end{align}
\end{lem}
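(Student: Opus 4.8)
The plan is to identify the operator on the left-hand side of \eqref{eqn:f*-pres} explicitly and then recognize the asserted equality as nothing but the definition of $*$-preservation. Denote by $F\colon B\to B$ the operator depicted on the left: it is obtained from $f^\dagger$ by bending its output leg into the cap $\psi m$ and its input leg into the cup $m^\dagger 1$. I claim that $F$ is precisely the operator $f^\star$ defined by $f^\star(y)=f(y^*)^*$. Granting this, the lemma follows at once, since $F=f$ holds if and only if $f(y^*)^*=f(y)$ for all $y\in B$, equivalently $f(y^*)=f(y)^*$ for all $y$, which is exactly the assertion that $f$ is $*$-preserving.

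To establish the claim I would evaluate $F$ on an arbitrary $y\in B$, regarded as a map $\C\to B$ plugged into the input leg. Tracing the strings, $f^\dagger$ is applied to one leg of $m^\dagger 1=\sum_a p_a\otimes q_a$, its output is paired with $y$ by the cap $\psi m$, and the remaining leg $q_a$ becomes the output, so that $F(y)=\sum_a \psi\!\left(y\,f^\dagger(p_a)\right)q_a$. Rewriting $\psi(y\,z)=\braket{y^*|z}$ and invoking the defining property of the Hilbert adjoint gives $\psi\!\left(y\,f^\dagger(p_a)\right)=\braket{y^*|f^\dagger(p_a)}=\braket{f(y^*)|p_a}$, whence $F(y)=\sum_a \braket{f(y^*)|p_a}\,q_a=(w^\dagger\otimes\id)(m^\dagger 1)$ with $w:=f(y^*)$. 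This last expression is exactly the result of bending the leg of $w^\dagger$ through the cup, which by the second identity in \eqref{dagger-*} equals $w^*$. Hence $F(y)=w^*=f(y^*)^*$, as claimed; one may equally run this calculation purely diagrammatically, applying \eqref{dagger-*} at each bent leg and the snake equation \eqref{snakeeq} to straighten.

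The one point requiring care is the bookkeeping forced by nontraciality: since $\psi$ need not be a trace, the cup and cap implicitly carry the modular twists recorded in Lemma \ref{balanceloop}, so I must keep both the order of the multiplications in $\psi(y\,f^\dagger(p_a))$ and the choice of which cup/cap leg is bent consistent with the orientation fixed in \eqref{dagger-*}. This is where the argument could go wrong if the conventions were mismatched, producing a stray factor of $Q$; the content of \eqref{dagger-*} is precisely that the twists in the cup and cap cancel against the $\dagger$ to leave the bare involution, so no residual $Q$ survives and $F$ is genuinely $y\mapsto f(y^*)^*$.
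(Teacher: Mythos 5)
Your proof is correct and takes essentially the same route as the paper: both arguments identify the left-hand diagram as the operator $y \mapsto f(y^*)^*$ by means of the identity (\ref{dagger-*}), and then observe that equality of this operator with $f$ is precisely $*$-preservation. The only difference is presentational --- you evaluate the diagram coefficient-wise on the cup $m^\dagger 1$ using inner products and the defining property of the Hilbert adjoint, whereas the paper carries out the same identification purely diagrammatically by applying (\ref{dagger-*}) twice to $f(x^*)^*$.
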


\begin{proof}
For $x \in B$, $f(x^*)^*$ is formulated in string diagrams as
\begin{align*}
f(x^*)^*
=
\left(
\begin{sd}
\node[circle,draw] (f) {$f$};
\node[circle,draw,below=1/4 of f] (x) {$x^*$};
\draw (f) --(x)
		(f)  --++(0,1/2) ;
\end{sd}
\right)^*
\overset{\textrm{(\ref{dagger-*})}}{=}
\left(
\begin{sd}
\node[circle,draw] (f) {$f$};
\node[circle,draw,left=1/4 of f] (x) {$x^\dagger$};
\draw (f) to[out=-90,in=-90] (x)
		(f)  --++(0,1/2) ;
\end{sd}
\right)^*
\overset{\textrm{(\ref{dagger-*})}}{=}
\begin{sd}
\node[circle,draw] (f) {$f^\dagger$};
\draw (f.south) arc(-180:0:\hsep/3) --++(0,1)
		(f.north) arc(0:180:\hsep/3) --++(0,-1/2) 
		node[circle,draw,fill=white] (x) {$x$};
\end{sd}.
\end{align*}
Therefore $f(x^*)^*=f(x) \ \forall x \in B$ is exactly equal to the desired equality.
\end{proof}

\begin{rmk}
Note that bending strings in the other direction can result in different operators, like
\begin{align*}
\begin{sd}
\node[circle,draw] (x*) {$x^\dagger$};
\draw (x*.south) arc(0:-180:\hsep/4)--++(0,2/3);
\end{sd}
&\overset{\textrm{(\ref{dagger-*})}}{=}
\begin{sd}
\node[circle,draw] (x`) {$x^*$};
\draw (x`.north)to[out=90,in=90]++(\loopdiam,1/2) to[out=-90,in=-90]++(-\loopdiam,1/2);
\end{sd}
\overset{\textrm{Lemma \ref{balanceloop}}}{=}
Qx^*Q^{-1}
\neq x^*,
\\
\begin{sd}
\node[circle,draw] (f) {$f^\dagger$};
\draw (f.south) arc(0:-180:\hsep/3) --++(0,1)
		(f.north) arc(180:0:\hsep/3) --++(0,-1/2) 
		node[circle,draw,fill=white] (x) {$x$};
\end{sd}
&\overset{\text{(flip)}}{=}
\begin{sd}
\node[circle,draw] (f) {$f^\dagger$};
\draw (f.south) to[out=-90,in=0]++(\hsep/3,-\loopdiam*1.3)
		to[out=-180,in=-90]++(\hsep/3,\loopdiam*1.3) --++(0,1)
		(f.north) to[out=90,in=180]++(-\hsep/3,\loopdiam*1.3)
		to[out=0,in=90]++(-\hsep/3,-\loopdiam*1.3) --++(0,-1/2) 
		node[circle,draw,fill=white] (x) {$x$};
\end{sd}
\overset{\text{Lemma \ref{balanceloop}}}{=}
Q f(Qx^*Q^{-1})^* Q^{-1}
\neq f(x^*)^*.
\end{align*}
In particular, $f^\dagger$ is not necessarily $*$-preserving even if $f:B \to B$ is $*$-preserving. Indeed $f^\dagger$ is $*$-preserving if and only if
\begin{align}
\begin{sd}
\node[circle,draw] (f) {$f$};
\draw (f.south) arc(-180:0:\hsep/3) --++(0,1)
		(f.north) arc(0:180:\hsep/3) --++(0,-1);
\end{sd}
=f^\dagger
, \text{ i.e., }
f=
\begin{sd}
\node[circle,draw] (f) {$f^\dagger$};
\draw (f.south) arc(0:-180:\hsep/3) --++(0,1)
		(f.north) arc(180:0:\hsep/3) --++(0,-1) ;
\end{sd}	\label{eqn:f'*-pres}
\end{align}
is satisfied, but the RHS is not necessarily equal to $*$-preserving $f=f((\cdot)^*)^*$ as above.
\end{rmk}

\begin{prop}
Given $*$-preserving operator $f:B \to B$, $f^\dagger$ is also $*$-preserving if and only if $f$ commutes with 
$\begin{sd}
	\coordinate (B1) ;
	\coordinate[above =1/2 of B1] (B1a) ;
	\draw (B1) to[out=90,in=90] ($(B1)!1/2!(B1a)+(-\loopdiam,0)$)
	to[out=-90,in=-90] (B1a);
\end{sd}
=Q^{-1} (\cdot) Q$.
\end{prop}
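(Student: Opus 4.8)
The plan is to use the two diagrammatic criteria already in hand and to reduce the problem to understanding the ``double bend.'' Throughout, write $\Phi(g)$ for the operator obtained from $g\colon B\to B$ by bending both of its legs as on the left-hand sides of (\ref{eqn:f*-pres}) and (\ref{eqn:f'*-pres}); note that these two displays use the \emph{same} bending. In this notation Lemma \ref{lem:*-pres} says that $f$ is $*$-preserving if and only if $\Phi(f^\dagger)=f$, while the criterion (\ref{eqn:f'*-pres}) from the Remark says that $f^\dagger$ is $*$-preserving if and only if $\Phi(f)=f^\dagger$. Since $f$ is assumed $*$-preserving we already have $\Phi(f^\dagger)=f$, so the whole statement will follow once we understand how $\Phi(f)$ and $f^\dagger$ are related; the natural move is to apply $\Phi$ a second time.

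The key step, and the main obstacle, is to compute the double bend $\Phi^2$. Diagrammatically, applying $\Phi$ twice drags the box $g$ once around a full closed loop, so that $\Phi^2(g)$ is $g$ with its two strands encircled by the cup--cap loop. In the tracial world this loop could be straightened and $\Phi^2$ would be the identity; here it cannot, precisely because of the cusp obstruction recorded in the balanced-symmetry relation (\ref{balancedsym}). Evaluating the loop by Lemma \ref{balanceloop} should give exactly conjugation by the left loop, i.e.
\[
\Phi^2(g)=\rho\circ g\circ\rho^{-1},\qquad \rho:=Q^{-1}(\cdot)Q .
\]
The delicate point is getting the direction right: the two closures differ by $\rho$ versus $\rho^{-1}=Q(\cdot)Q^{-1}$, and one must check against the explicit orientations of the arcs in (\ref{eqn:f*-pres}) that it is the left loop $\rho$ (and not the right loop) that appears. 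I would confirm this either by tracking the arcs through Lemma \ref{balanceloop}, or, as an algebraic cross-check, by identifying $\Phi$ with the transpose with respect to the bilinear form $\psi m$ and computing $\Phi^2$ from the modular identity $\psi(xy)=\psi(y\,QxQ^{-1})$.

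Granting the identity for $\Phi^2$, the conclusion is immediate. Applying $\Phi$ to $\Phi(f^\dagger)=f$ and using the double-bend formula gives $\Phi(f)=\Phi^2(f^\dagger)=\rho\,f^\dagger\rho^{-1}$. By the criterion (\ref{eqn:f'*-pres}), $f^\dagger$ is $*$-preserving if and only if $\Phi(f)=f^\dagger$, which now reads $\rho\,f^\dagger\rho^{-1}=f^\dagger$, i.e. $f^\dagger$ commutes with $\rho$. Finally, $\rho$ is conjugation by the positive element $Q$ and is self-adjoint on $L^2(B,\psi)$ (one checks $\langle\rho x,y\rangle=\langle x,\rho y\rangle$ directly), so $f^\dagger$ commutes with $\rho$ if and only if $f$ does. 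Hence $f^\dagger$ is $*$-preserving exactly when $f$ commutes with the left loop $Q^{-1}(\cdot)Q$, as claimed.
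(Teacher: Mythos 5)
Your proposal is correct, and it is organized genuinely differently from the paper's proof. The paper never forms $\Phi^2$: writing $\Psi$ for the \emph{opposite} bending (the one on the right-hand side of (\ref{eqn:f'*-pres}), so $\Psi=\Phi^{-1}$ by the snake equation), it combines the standing hypothesis with the criterion into the single equality $\Phi(f)=f^\dagger=\Psi(f)$, and then turns $\Phi(f)=\Psi(f)$ into commutation of $f$ itself with the loop by bending one string on each side, Lemma \ref{balanceloop} identifying the resulting loops with $Q^{-1}(\cdot)Q$ and $Q(\cdot)Q^{-1}$. Your route instead squares the bend, and your pivotal identity is true exactly as guessed, including the direction: $\Phi(g)$ is the transpose for the nondegenerate bilinear form $E(x,y)=\psi(xy)$, i.e. $E(\Phi(g)x,y)=E(x,gy)$, and applying the modular identity $\psi(xy)=\psi(y\,QxQ^{-1})$ twice yields $E(\Phi^2(g)x,y)=E(\sigma^{-1}g\sigma(x),y)$ with $\sigma=Q(\cdot)Q^{-1}$, hence $\Phi^2(g)=\rho g\rho^{-1}$ for $\rho=\sigma^{-1}=Q^{-1}(\cdot)Q$. (Your worry about $\rho$ versus $\rho^{-1}$ is immaterial anyway: commuting with $\rho$ is equivalent to commuting with $\rho^{-1}$.) The remaining steps are all valid, including the self-adjointness of $\rho$ on $L^2(B,\psi)$, which is the cyclic computation $\langle\rho x,y\rangle_\psi=\Tr(Qy^*Q^{-1}xQ)=\Tr(Q^2y^*Q^{-1}x)=\psi((Q^{-1}yQ)^*x)=\langle x,\rho y\rangle_\psi$. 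Comparing the two routes: the paper's stays inside the diagrammatic calculus and lands on commutation of $f$ directly, needing neither $\Phi^2$ nor any passage between $f$ and $f^\dagger$; yours isolates a clean, reusable algebraic fact (the square of the bend is conjugation by the modular-type automorphism) at the cost of one extra transfer step. Two things to tighten: first, your key formula is asserted (``should give'') rather than derived, so you should include the short transpose computation sketched above; second, you could avoid the transfer step altogether by applying $\Phi^{-1}$ rather than $\Phi$ to the hypothesis $\Phi(f^\dagger)=f$, which turns the criterion $\Phi(f)=f^\dagger$ into $\Phi^2(f)=f$, i.e. commutation of $f$ itself with $\rho$.
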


\begin{proof}
If $f^\dagger$ is also $*$-preserving, then (\ref{eqn:f'*-pres}) and the adjoint of (\ref{eqn:f*-pres}) shows
\[
\begin{sd}
\node[circle,draw] (f) {$f$};
\draw (f.south) arc(-180:0:\hsep/3) --++(0,1)
		(f.north) arc(0:180:\hsep/3) --++(0,-1);
\end{sd}
\overset{\textrm{(\ref{eqn:f'*-pres})}}{=} f^\dagger 
\overset{\textrm{(\ref{eqn:f*-pres})}}{=}
\begin{sd}
\node[circle,draw] (f) {$f$};
\draw (f.south) arc(0:-180:\hsep/3) --++(0,1)
		(f.north) arc(180:0:\hsep/3) --++(0,-1) ;
\end{sd}.
\]
By bending the bottom string counterclockwise and the top string clockwise, we obtain
\[
\begin{sd}
\node[circle,draw] (f) {$f$};
\draw (f) --++(0,-1/2)
		(f.north) to[out=90,in=90]++(-\loopdiam,1/4) 
		to[out=-90,in=-90]++(\loopdiam,1/4);
\end{sd}
=
\begin{sd}
\node[circle,draw] (f) {$f$};
\draw (f) --++(0,1/2)
		(f.south) to[out=-90,in=-90]++(-\loopdiam,-1/4) 
		to[out=90,in=90]++(\loopdiam,-1/4);
\end{sd}.
\]
Conversely if $f$ commutes with 
$\begin{sd}
	\coordinate (B1) ;
	\coordinate[above =1/2 of B1] (B1a) ;
	\draw (B1) to[out=90,in=90] ($(B1)!1/2!(B1a)+(-\loopdiam,0)$)
	to[out=-90,in=-90] (B1a);
\end{sd}$, then we can go back to
\[
\begin{sd}
\node[circle,draw] (f) {$f$};
\draw (f.south) arc(-180:0:\hsep/3) --++(0,1)
		(f.north) arc(0:180:\hsep/3) --++(0,-1);
\end{sd}
=
\begin{sd}
\node[circle,draw] (f) {$f$};
\draw (f.south) arc(0:-180:\hsep/3) --++(0,1)
		(f.north) arc(180:0:\hsep/3) --++(0,-1) ;
\end{sd} 
\overset{\textrm{(\ref{eqn:f*-pres})}}{=}
f^\dagger.
\] 
\end{proof}

In the case of $B=B(H)$ for a finite dimensional Hilbert space $H$, 
operators in $B(H) \cong H \otimes H^*$ can be expressed by strings of $H$ and $H^*$ under the identification
$H \otimes H^* \ni \ket{v} \otimes \bra{w} \leftrightarrow \ket{v}\bra{w} \in B(H)$.
This identification is formulated in string diagrams as 
\[
B(H) \ni T
\leftrightarrow
\begin{sd}
\coordinate (H1);
\coordinate[right=\hsep*2/3] (H2);
\node[circle,draw,below=1/2 of H1] (T) {$T$};
\draw[->-] (T) -- (H1);
\draw[-<-] (T.south) arc(-180:0:\hsep/3) -- (H2);
\end{sd}
\in H \otimes H^* .
\]
Recall that the strings of $H$ are oriented from bottom to top and those of $H^*$ from top to bottom.

\begin{prop}[{Musto, Reutter, Verdon \cite[Definition 2.5]{Musto2018compositional}}]\label{prop:HH*=B(H)}
By the identification above, the canonical operators of $(B(H),\tau=\tau_{B(H)})$ is formulated in string diagrams of $H \otimes H^*$ as follows:
\[
\id_H=\begin{sd}
	\draw[-<-]  (0,0) arc(-180:0:\hsep/4) ;
\end{sd}, \
m=\begin{sd}
	\coordinate (H1) at (-\hsep/2,0);  
	\coordinate (H2) at (\hsep/2,0)  ;
	\coordinate (H3) at (0,\hsep);
	\draw[->-] ([xshift=-0.2cm]H1) to[out=90,in=-90] ([xshift=-0.2cm]H3) ;
	\draw[->-] ([xshift=-0.2cm]H2) to[out=90,in=90] ([xshift=0.2cm]H1) ;
	\draw[-<-] ([xshift=0.2cm]H2) to[out=90,in=-90] ([xshift=0.2cm]H3) ;
\end{sd}, \
\tau=\frac{\Tr}{\abs{H}}=\frac{1}{\abs{H}}\begin{sd}
	\draw[->-]  (0,0) arc(180:0:\hsep/4) ;
\end{sd}, \
m^\dagger=\abs{H}\begin{sd}  
	\coordinate (H1) at (-\hsep/2,0);  
	\coordinate (H2) at (\hsep/2,0)  ;
	\coordinate (H3) at (0,-\hsep);
	\draw[-<-] ([xshift=-0.2cm]H1) to[out=-90,in=90] ([xshift=-0.2cm]H3) ;
	\draw[-<-] ([xshift=-0.2cm]H2) to[out=-90,in=-90] ([xshift=0.2cm]H1) ;
	\draw[->-] ([xshift=0.2cm]H2) to[out=-90,in=90] ([xshift=0.2cm]H3) ;
\end{sd}.
\]
\end{prop}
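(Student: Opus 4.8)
The plan is to verify the four displayed formulas directly, reading each string diagram of $H$ and $H^*$ as the concrete linear map it denotes (via the coupling operators \eqref{B(H)-unit} and their snake relations \eqref{Hsnake}) and comparing with the canonical operators of $(B(H),\tau)$ evaluated on rank-one operators $\ket{v}\bra{w}$. Since $B(H)=M_{\abs{H}}$ is a single matrix block and $\tau=\Tr/\abs{H}$, we are exactly in the situation of Lemma~\ref{lem:1psimm*} with $Q=\abs{H}^{-1}\id_H$; in fact one may simply substitute this $Q$ into that lemma and rewrite the matrix units $\tilde{e_{ij}}$ as $\ket{v_i}\bra{v_j}$, but it is cleaner to argue basis-freely using the identification $\ket{v}\otimes\bra{w}\leftrightarrow\ket{v}\bra{w}$.

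First I would dispose of the unit and the counit. The unit $1\mapsto\id_H=\sum_i\ket{v_i}\bra{v_i}$ corresponds under the identification to $\sum_i\ket{v_i}\otimes\bra{v_i}$, which is precisely the cup of \eqref{B(H)-unit}; this gives the first formula. For the counit, $\tau(\ket{v}\bra{w})=\abs{H}^{-1}\Tr(\ket{v}\bra{w})=\abs{H}^{-1}\braket{w|v}$, while the evaluation cap sends $\ket{v}\otimes\bra{w}\mapsto\braket{w|v}$; hence $\tau$ equals $\abs{H}^{-1}$ times that cap, which is exactly the stated normalization $\tau=\Tr/\abs{H}$.

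Next, for the multiplication I would evaluate the composition $(\ket{v}\bra{w})(\ket{x}\bra{y})=\braket{w|x}\,\ket{v}\bra{y}$ and match it termwise with the diagram: the two inner strands, carrying the $H^*$-leg $\bra{w}$ of the first factor and the $H$-leg $\ket{x}$ of the second, are joined by the evaluation cap and produce the scalar $\braket{w|x}$, while the outer legs $\ket{v}$ and $\bra{y}$ are carried to the two legs of the output. This reproduces $\braket{w|x}\,\ket{v}\otimes\bra{y}$, establishing the formula for $m$.

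The crux is the factor $\abs{H}$ in $m^\dagger$, and this is where I expect the only real difficulty. I would determine $m^\dagger$ from the defining adjoint relation $\braket{b\otimes c|m^\dagger a}_{\tau\otimes\tau}=\braket{m(b\otimes c)|a}_\tau=\braket{bc|a}_\tau$ on rank-one operators. The essential point is that the inner product $\braket{\cdot|\cdot}_\tau$ on $B(H)$ equals $\abs{H}^{-1}$ times the standard inner product on $H\otimes H^*$, so naively mirroring the diagram for $m$ would give the wrong normalization; the compensating scalar is exactly $\abs{H}$. Concretely, the claimed map sends $\ket{a}\bra{b}\mapsto\abs{H}\sum_i\ket{a}\bra{v_i}\otimes\ket{v_i}\bra{b}$, and checking the adjoint relation reduces, after the normalization factors cancel, to the completeness relation $\sum_i\ket{v_i}\bra{v_i}=\id_H$ (the very cup that represents the unit), which collapses the sum over the orthonormal basis and yields the desired equality. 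Everything apart from this normalization bookkeeping is a direct read-off of the cup/cap conventions.
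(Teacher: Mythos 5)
Your proposal is correct and follows essentially the same route as the paper's proof: read off the unit, counit, and multiplication directly on rank-one operators, and locate the entire subtlety of the factor $\abs{H}$ in $m^\dagger$ in the discrepancy between $\braket{\cdot|\cdot}_\tau$ and the standard inner product on $H\otimes H^*$ (which the paper identifies with $\braket{\cdot|\cdot}_{\Tr}$, so that the mirrored diagram is the $\Tr$-adjoint of $m$ and the $\tau$-adjoint is $\abs{H}$ times it). Your direct verification of the adjoint relation via the completeness relation $\sum_i\ket{v_i}\bra{v_i}=\id_H$ is just a hands-on rendering of that same rescaling argument.
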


\begin{proof}
The equality about $\id_H$ is by the identification.
Since the multiplication in $B(H)$ is the composition, the equality about $m$ directly follows from the snake equation (\ref{Hsnake}).
Let $\{v_i \}_i$ be an ONB for $H$. Then
\[
\begin{sd}
	\draw[->-]  (0,0) arc(180:0:\hsep/2) ;
	\node[circle,draw,fill=white](H1) at (0,0) {$v_i$} ;
	\node[circle,draw,fill=white](H2) at (\hsep,0) {$v_j^\dagger$} ;
\end{sd}
= \braket{v_j|v_i}=\delta_{ij}
\]
shows the equality about $\tau$. 
Note that $H \otimes H^*$ is equipped with the inner product
\begin{align*}
\braket{v_1 \otimes w_1^\dagger | v_0 \otimes w_0^\dagger}_{H \otimes H^*}
&=\braket{v_1| v_0}_H  \braket{w_1^\dagger | w_0^\dagger}_{H^*}
=\braket{v_1| v_0}_H  \braket{w_0| w_1}_{H}
\\
&=\Tr \left( \ket{w_1} \bra{v_1} \ket{v_0} \bra{w_0} \right)
=\Tr \left( (\ket{v_1} \bra{w_1})^\dagger (\ket{v_0} \bra{w_0}) \right)
\\
&=\left\langle \ (\ket{v_1} \bra{w_1}) \ \middle| \ (\ket{v_0} \bra{w_0}) \ \right\rangle_{\Tr},
\end{align*}
hence the diagram $\begin{sd}  
	\coordinate (H1) at (-\hsep/2,0);  
	\coordinate (H2) at (\hsep/2,0)  ;
	\coordinate (H3) at (0,-\hsep);
	\draw[-<-] ([xshift=-0.2cm]H1) to[out=-90,in=90] ([xshift=-0.2cm]H3) ;
	\draw[-<-] ([xshift=-0.2cm]H2) to[out=-90,in=-90] ([xshift=0.2cm]H1) ;
	\draw[->-] ([xshift=0.2cm]H2) to[out=-90,in=90] ([xshift=0.2cm]H3) ;
\end{sd}$ 
is the adjoint of $m$ with respect to $\Tr$. Therefore the adjoint of $m$ with respect to $\tau=\Tr/\abs{H}$ is as stated.
\end{proof}

Replacing $H$ with $L^2(B,\psi)\cong B$, we have the same result for strings of $B$. 

\begin{cor}\label{cor:B(B)}
By an identification  
\[
B(L^2(B,\psi)) \ni T
\leftrightarrow
\begin{sd}
\coordinate (H1);
\coordinate[right=\hsep*2/3] (H2);
\node[circle,draw,below=1/3 of H1] (T) {$T$};
\draw (T) -- (H1);
\draw (T.south) arc(-180:0:\hsep/3) -- (H2);
\end{sd}
\in B \otimes B,
\]
the canonical operators of $(B(L^2(B,\psi)),\tau_{B(B)}=\Tr_{B(B)}/\abs{B})$ is formulated in string diagrams of $B \otimes B$ as follows:
\[
\id_B=
\begin{sd}
	\draw  (0,0) arc(-180:0:\hsep/4) ;
\end{sd}, \
m_{B(B)}=
\begin{sd}
	\coordinate (H1) at (-\hsep/2,0);  
	\coordinate (H2) at (\hsep/2,0)  ;
	\coordinate (H3) at (0,\hsep);
	\draw ([xshift=-0.2cm]H1) to[out=90,in=-90] ([xshift=-0.2cm]H3) ;
	\draw ([xshift=-0.2cm]H2) to[out=90,in=90] ([xshift=0.2cm]H1) ;
	\draw ([xshift=0.2cm]H2) to[out=90,in=-90] ([xshift=0.2cm]H3) ;
\end{sd}, \
\tau_{B(B)}=\frac{1}{\abs{B}}
\begin{sd}
	\draw  (-\hsep/4,0) to[out=60,in=0] (0,\loopdiam*1.5)
		 to[out=-180,in=120] (\hsep/4,0);
\end{sd}, \
m_{B(B)}^\dagger
=\abs{B}
\begin{sd}  
	\coordinate (H1) at (-\hsep/2,0);  
	\coordinate (H2) at (\hsep/2,0)  ;
	\coordinate (H3) at (0,-\hsep);
	\draw ([xshift=-0.2cm]H1) to[out=-90,in=90] ([xshift=-0.2cm]H3) ;
	\draw ([xshift=-0.2cm]H2)  to[out=-120,in=180] 
			($([xshift=-0.2cm]H2)!0.5!([xshift=0.2cm]H1)-(0,\loopdiam*1.5)$)
		 to[out=0,in=-60]  ([xshift=0.2cm]H1) ;
	\draw ([xshift=0.2cm]H2) to[out=-90,in=90] ([xshift=0.2cm]H3) ;
\end{sd}.
\]
\end{cor}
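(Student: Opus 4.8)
The plan is to obtain this as a direct specialization of Proposition \ref{prop:HH*=B(H)}: apply that proposition to the Hilbert space $H = L^2(B,\psi)$ and then rewrite everything in terms of $B$-strings. Since $B(L^2(B,\psi))$ is literally $B(H)$ for this choice of $H$, and its canonical trace $\tau_{B(B)} = \Tr_{B(B)}/\abs{B}$ coincides with the trace $\frac{1}{\abs{H}}\Tr_{B(H)}$ of Proposition \ref{prop:HH*=B(H)} once we note $\dim H = \dim L^2(B,\psi) = \dim B = \abs{B}$, the four formulas for $\id$, $m$, $\tau$, and $m^\dagger$ should transport verbatim, with the scalars $\abs{H}$ and $1/\abs{H}$ becoming $\abs{B}$ and $1/\abs{B}$ exactly as displayed.

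Concretely, I would first recall from Proposition \ref{prop:HH*=B(H)} the diagrams for $\id_H$, $m$, $\tau$, and $m^\dagger$ in $H$/$H^*$-strings together with the identification $B(H) \cong H \otimes H^*$. Next I would invoke the Hilbert-space self-duality of $L^2(B,\psi)$ — concretely via the ONB $\{\tilde{e_{ij,s}}\}$ of Lemma \ref{lem:ONBinB}, which identifies $L^2(B,\psi)^* \cong L^2(B,\psi) = B$ — to relabel every $H$-string and every $H^*$-string as a single (unoriented) $B$-string. Under this relabeling the oriented coupling operators of (\ref{B(H)-unit}) turn into the unoriented cup and cap, so the orientations may simply be erased, producing the four diagrams of the statement and the identification $B(L^2(B,\psi)) \cong B \otimes B$. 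After this relabeling no computation remains, since each diagram is verified precisely by the verification already carried out for the corresponding $H$-diagram.

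The step requiring the most care — and the only genuine subtlety — is the orientation-dropping itself. One must be explicit that the self-duality turning $L^2(B,\psi)^*$ into $B$ here is the \emph{Hilbert-space} duality of the space $L^2(B,\psi)$ on which these operators act, and \emph{not} the Frobenius self-duality of $B$ governed by the balanced-symmetric and snake equations (\ref{balancedsym}), (\ref{snakeeq}); indeed those two self-dualities do not agree, as one sees already by comparing the coevaluation $1 \mapsto \sum \tilde{e_{ij,s}} \otimes \tilde{e_{ij,s}}$ used here with the Frobenius cup $m^\dagger 1$. In other words, the unoriented $B$-strings appearing in this corollary carry the \emph{outer} Hilbert structure of $B(L^2(B,\psi))$ rather than the \emph{inner} multiplicative structure of $B$, and since the corollary only expresses the canonical operators of $B(L^2(B,\psi))$ in terms of its own $B \otimes B$ identification, no inconsistency arises. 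Once this distinction is stated cleanly, the corollary follows immediately from Proposition \ref{prop:HH*=B(H)}.
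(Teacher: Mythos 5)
Your starting point --- specializing Proposition \ref{prop:HH*=B(H)} to $H=L^2(B,\psi)$ and transporting the four formulas along an identification $L^2(B,\psi)^*\cong B$ --- is the same as the paper's, but the identification you choose is the wrong one, and this is fatal rather than cosmetic. The identification the corollary uses (already pinned down by its displayed diagram, whose unoriented bent string means $m^\dagger 1$ in this paper's calculus) is the one in the paper's proof, namely $B\ni y \leftrightarrow \bra{y^*}\in L^2(B,\psi)^*$, which is precisely the \emph{Frobenius} self-duality: $\bra{y^*}=\psi(y\,\cdot\,)=\psi m(y\otimes\,\cdot\,)$. Moreover it is forced: writing $\phi:L^2(B,\psi)^*\to B$ for the identification, if the evaluation $H^*\otimes H\to\C$ is to become the unoriented cap $\psi m$ (this is what the middle cap of $m_{B(B)}$ denotes --- unoriented cups and caps in this paper always mean $m^\dagger 1$ and $\psi m$, not some ``outer'' relabeled pairing), then $\psi(\phi(\bra{w})\,v)=\braket{w|v}=\psi(w^*v)$ for all $v$ forces $\phi(\bra{w})=w^*$ by faithfulness of $\psi$. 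Under your ONB identification $\bra{\tilde{e}_{ij,s}}\mapsto\tilde{e}_{ij,s}$ the evaluation instead becomes the basis-dependent form $\tilde{e}_{kl,r}\otimes\tilde{e}_{ij,s}\mapsto\delta_{ik}\delta_{jl}\delta_{rs}$, which is not $\psi m$ (whose Kronecker pattern is $\delta_{jk}\delta_{il}$ with $Q$-weights) nor any other diagram in the statement; so after your relabeling none of the four formulas holds, and ``no computation remains'' is false. Your closing paragraph gets the subtlety exactly backwards: the unoriented strings of the corollary \emph{do} carry the Frobenius structure of $(B,\psi)$ --- that is the entire content of the statement and what makes it usable in later mixed diagrammatic computations --- and it is the Hilbert/ONB duality that must be discarded, not the Frobenius one.

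Relatedly, ``the orientations may simply be erased, producing the four diagrams of the statement'' misreads the statement itself. Two of the four diagrams are not plain erasures of the oriented ones: $\tau_{B(B)}$ and $m_{B(B)}^\dagger$ contain a twisted cap/cup (a balancing loop), while $\id_B$ and $m_{B(B)}$ are plain. This asymmetry is unavoidable and is where the actual work lies: with the forced identification $\bra{w}\leftrightarrow w^*$, the evaluation $H\otimes H^*\to\C$ becomes $x\otimes y\mapsto\braket{y^*|x}=\psi(yx)$, which by (\ref{balancedsym}) and Lemma \ref{balanceloop} is the cap with a balancing loop (equal to the plain cap $\psi m$ only when $\psi$ is tracial), and the coevaluation into $H^*\otimes H$ becomes $1\mapsto\sum_i b_i^*\otimes b_i$, the twisted cup; whereas the evaluation on $H^*\otimes H$ and the coevaluation $1\mapsto\sum_i b_i\otimes b_i^*=m^\dagger 1$ into $H\otimes H^*$ are plain. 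These two short computations, which distinguish the plain from the twisted couplings, are exactly the paper's proof of the corollary, and they are the steps missing from your proposal.
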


\begin{proof}
The statement directly follows from the previous proposition by the identification 
\begin{align}
B=L^2(B,\psi)\ni y=\ket{y} \leftrightarrow (y^*)^\dagger=\bra{y^*} \in L^2(B,\psi)^*		\label{eqn:L2*=L2}
\end{align}
because $\Tr(\ket{x} \bra{y^*}) =\braket{y^*|x}_\psi=\psi(yx)=
\begin{sd}
	\draw (-0.3,0)node[circle,draw](x){$x$} (0.3,0)node[circle,draw](y){$y$};
	\draw  (x) to[out=45,in=0] (0,1/2)
		 to[out=-180,in=135] (y);
\end{sd}$ for any $x,y  \in B$.
Since 
\[
\begin{sd}
	\draw[->-]  (0,0) arc(-180:0:\hsep/4) ;
\end{sd}
=
\sum_i b_i^\dagger \otimes b_i
\overset{\textrm{(\ref{eqn:L2*=L2})}}{\leftrightarrow} \sum_i b_i^* \otimes b_i
=\sum_i
\begin{sd}
	\draw (-\hsep/2,0) node[draw] (x){$b_i^\dagger$} 
			(\hsep/3,0) node[draw] (y){$b_i$};
	\draw (x)to[out=-60,in=-90](0,1/3) (y)--(\hsep/3,1/3);
\end{sd}
=
\begin{sd}
	\draw  (-\hsep/4,0) to[out=-60,in=0] (0,-\loopdiam*1.5)
		 to[out=-180,in=-120] (\hsep/4,0);
\end{sd}
\]
holds for an ONB $\{ b_i\}$ for $L^2(B,\psi)$, 
$m^\dagger$ with respect to $\tau_{B(B)}$ is as stated.
\end{proof}

The balancing loop in the trace is caused by the discrepancy between the inner products $\langle\cdot|\cdot\rangle_{\Tr_{B(B)}}$ and $\langle\cdot|\cdot\rangle_{\psi\otimes\psi}$ on $B(L^2(B,\psi))=B \otimes B$.
If we replace $\begin{sd}
	\draw  (-\hsep/4,0) to[out=60,in=0] (0,\loopdiam*1.5)
		 to[out=-180,in=120] (\hsep/4,0);
\end{sd}$
 with
$\begin{sd}
	\draw  (0,0) arc(180:0:\hsep/4) ;
\end{sd}$,
then we obtain a nontracial  $B(L^2(B,\psi))$.

\begin{cor}
By the same identification $B(L^2(B,\psi))=B \otimes B$ as in Corollary \ref{cor:B(B)}, 
$\tilde{\psi} \coloneqq \delta^{-2}\begin{sd}
	\draw  (0,0) arc(180:0:\hsep/4) ;
\end{sd}$ is a $\delta^2$-form on $B(L^2(B,\psi))$ with cannonical operators
\[
\id_B=\begin{sd}
	\draw  (0,0) arc(-180:0:\hsep/4) ;
\end{sd}, \
m_{B(B)}=\begin{sd}
	\coordinate (H1) at (-\hsep/2,0);  
	\coordinate (H2) at (\hsep/2,0)  ;
	\coordinate (H3) at (0,\hsep);
	\draw ([xshift=-0.2cm]H1) to[out=90,in=-90] ([xshift=-0.2cm]H3) ;
	\draw ([xshift=-0.2cm]H2) to[out=90,in=90] ([xshift=0.2cm]H1) ;
	\draw ([xshift=0.2cm]H2) to[out=90,in=-90] ([xshift=0.2cm]H3) ;
\end{sd}, \
\tilde{\psi}=\delta^{-2}\begin{sd}
	\draw  (0,0) arc(180:0:\hsep/4) ;
\end{sd}, \
m_{B(B)}^\dagger=\delta^2 \begin{sd}  
	\coordinate (H1) at (-\hsep/2,0);  
	\coordinate (H2) at (\hsep/2,0)  ;
	\coordinate (H3) at (0,-\hsep);
	\draw ([xshift=-0.2cm]H1) to[out=-90,in=90] ([xshift=-0.2cm]H3) ;
	\draw ([xshift=-0.2cm]H2) to[out=-90,in=-90] ([xshift=0.2cm]H1) ;
	\draw ([xshift=0.2cm]H2) to[out=-90,in=90] ([xshift=0.2cm]H3) ;
\end{sd}.
\]
\end{cor}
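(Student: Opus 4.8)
The plan is to leverage that the cup $\id_B$ and the multiplication $m_{B(B)}$ are intrinsic to the algebra $B(L^2(B,\psi))$---the composition of operators and its unit do not depend on any inner product---so they are literally the operators already computed in Corollary~\ref{cor:B(B)}. Only the counit and comultiplication, being $\dagger$-dependent, can differ. Hence it suffices to prove three things: that $\tilde\psi$ is a faithful state, that it is a $\delta^2$-form, and that $\delta^2$ times the displayed straight co-composition is the $\tilde\psi$-adjoint of $m_{B(B)}$.

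First I would identify $\tilde\psi$ explicitly. The straight cap is $\psi m$ on $B\otimes B$, so evaluating it on the image $\xi_T$ of $T$ under the identification of Corollary~\ref{cor:B(B)} and using the balanced-symmetric relation~(\ref{balancedsym}) together with Lemma~\ref{balanceloop} to absorb the $Q$-twist, one finds
\[
\tilde\psi(T)=\delta^{-2}\Tr_{B(B)}(\Ad_Q\,T),\qquad \Ad_Q(x)=QxQ^{-1},
\]
with $\Ad_Q$ the very balancing loop of Lemma~\ref{balanceloop}. Concretely, on the ONB of Lemma~\ref{lem:ONBinB} one checks $\tilde\psi(\ket{\tilde{e_{ij,s}}}\bra{\tilde{e_{kl,r}}})=\delta^{-2}(Q_s)_{ii}(Q_s^{-1})_{jj}\,\delta_{ij,s}^{kl,r}$, so $\Ad_Q$ acts diagonally on this basis with positive eigenvalues $(Q_s)_{ii}(Q_s^{-1})_{jj}$. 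Since $\Ad_Q$ is thus a positive invertible element of $B(L^2(B,\psi))$, the functional $\tilde\psi$ is faithful and positive; and $\tilde\psi(\id_B)=\delta^{-2}\sum_s\Tr_s(Q_s)\Tr_s(Q_s^{-1})=\Tr(Q)=1$ by Proposition~\ref{prop:deltaform}, so $\tilde\psi$ is a faithful state.

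Next I would apply Proposition~\ref{prop:deltaform} to the single matrix block $B(L^2(B,\psi))\cong M_{\abs{B}}$. Its $Q$-matrix for $\tilde\psi$ is $\tilde Q=\delta^{-2}\Ad_Q$, whose inverse has eigenvalues $\delta^2(Q_s)_{ii}^{-1}(Q_s)_{jj}$, whence
\[
\Tr_{B(B)}(\tilde Q^{-1})=\delta^2\sum_s\Tr_s(Q_s^{-1})\Tr_s(Q_s)=\delta^4\Tr(Q)=\delta^4,
\]
again using $\Tr_s(Q_s^{-1})=\delta^2$. Since $\delta^4=(\delta^2)^2$, Proposition~\ref{prop:deltaform} shows that $\tilde\psi$ is a $\delta^2$-form; equivalently $m_{B(B)}m_{B(B)}^\dagger=(\delta^2)^2\id$, which is the special condition~(\ref{special}). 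For the comultiplication, a short computation on the product ONB shows that the GNS inner product of $\tilde\psi$ equals $\delta^{-2}$ times $\langle\cdot|\cdot\rangle_{\psi\otimes\psi}$ under the identification; as these differ only by a positive scalar the adjoints coincide, so $m_{B(B)}^\dagger$ is the $\psi\otimes\psi$-adjoint of $m_{B(B)}$, for which the straight cap and cup are honest dual pairings and the normalization works out to the stated $\delta^2$.

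The step I expect to be the main obstacle is the explicit identification of $\tilde\psi$ and the attendant $Q$-bookkeeping: tracking the modular operator $\Ad_Q$ and the $Q^{\pm1/2}$ factors correctly, and keeping straight which inner product each $\dagger$ refers to---$\langle\cdot|\cdot\rangle_{\Tr_{B(B)}}$ for Corollary~\ref{cor:B(B)} versus a positive scalar multiple of $\langle\cdot|\cdot\rangle_{\psi\otimes\psi}$ here---since the entire difference between the two corollaries is exactly this bump/no-bump modular correction. Once the formula $\tilde\psi=\delta^{-2}\Tr_{B(B)}(\Ad_Q\,\cdot)$ is in hand, the remaining claims are immediate consequences of Proposition~\ref{prop:deltaform}.
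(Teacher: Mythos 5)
Your handling of the state itself is correct, and it takes a genuinely different (and legitimate) route from the paper: you identify the density matrix $\tilde Q=\delta^{-2}\Ad_Q$ explicitly, with $\Ad_Q=Q(\cdot)Q^{-1}$ the loop of Lemma \ref{balanceloop}, and then get faithfulness, the state property, and the $\delta^2$-form condition from Proposition \ref{prop:deltaform} via $\Tr_{B(B)}(\tilde Q^{-1})=\delta^2\sum_s\Tr_s(Q_s^{-1})\Tr_s(Q_s)=\delta^4$. The paper instead stays diagrammatic: it proves $\langle\cdot|\cdot\rangle_{\delta^2\tilde\psi}=\langle\cdot|\cdot\rangle_{\psi\otimes\psi}$ exactly as in Proposition \ref{prop:HH*=B(H)}, and then both the state property and the specialty $m_{B(B)}m_{B(B)}^\dagger=(\delta^2)^2\id$ drop out of the single fact that the closed circle $\psi m m^\dagger 1$ equals $\delta^2$.

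The genuine gap is in your last step, precisely at the normalization point you flagged as the main obstacle. From $\langle\cdot|\cdot\rangle_{\tilde\psi}=\delta^{-2}\langle\cdot|\cdot\rangle_{\psi\otimes\psi}$ you conclude that ``the adjoints coincide,'' so that the $\tilde\psi$-adjoint of $m_{B(B)}$ is its $\psi\otimes\psi$-adjoint. This is false for a map between \emph{different} tensor powers: $m_{B(B)}$ maps $B(L^2(B,\psi))\otimes B(L^2(B,\psi))$ to $B(L^2(B,\psi))$, and the rescaling multiplies the domain inner product $\langle\cdot|\cdot\rangle_{\tilde\psi\otimes\tilde\psi}$ by $\delta^{-4}$ while multiplying the codomain inner product by only $\delta^{-2}$; consequently the adjoint with respect to $\tilde\psi$ equals $\delta^{2}$ times the adjoint with respect to $\psi\otimes\psi$. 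Since the $\psi\otimes\psi$-adjoint of $m_{B(B)}=\id_B\otimes(\psi m)\otimes\id_B$ is $\id_B\otimes(m^\dagger 1)\otimes\id_B$ with no scalar (because $(\psi m)^\dagger=m^\dagger 1$), your argument as written produces the mirrored diagram \emph{without} the factor $\delta^2$, contradicting the statement. It also contradicts your own computation: with the bare adjoint one would have
\[
m_{B(B)}m_{B(B)}^\dagger=\id_B\otimes(\psi m\, m^\dagger 1)\otimes\id_B=\delta^2\,\id_{B(B)},
\]
making $\tilde\psi$ a $\delta$-form, not the $\delta^2$-form you correctly derived from Proposition \ref{prop:deltaform}. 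The repair is the scaling argument the paper borrows from the proof of Proposition \ref{prop:HH*=B(H)}: having established $\langle\cdot|\cdot\rangle_{\delta^2\tilde\psi}=\langle\cdot|\cdot\rangle_{\psi\otimes\psi}$, the $\tilde\psi$-adjoint is $\delta^2$ times the $\psi\otimes\psi$-adjoint, which is exactly the stated $m_{B(B)}^\dagger=\delta^2\,(\id_B\otimes m^\dagger 1\otimes\id_B)$.
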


\begin{proof}
The unit and the multiplication are those in Corollary \ref{cor:B(B)}. 
In the same way as Proposition \ref{prop:HH*=B(H)}, 
we have $\langle\cdot|\cdot\rangle_{\delta^2 \tilde{\psi}}=\langle\cdot|\cdot\rangle_{\psi\otimes\psi}$, and hence $\tilde{\psi}$ is faithful and $m_{B(B)}^\dagger$ with respect to $\tilde{\psi}$ is as stated.
Since $\begin{sd} 
	\draw (0,0)arc(0:360:\hsep/4);
\end{sd}=\delta^2$, $\tilde{\psi}$ is a state and
we have $m_{B(B)}m_{B(B)}^\dagger=(\delta^2)^2 \id_{B(B)}$.
\end{proof}

Note that $m_B^\dagger:B \to B\otimes B$ with $m_{B(B)}$ as above is a $*$-homomorphism that corresponds to the left regular representation of $B$ (cf. Vicary \cite[Lemma 3.19, 3.20]{Vicary2011categorical}).
If we identify $L^2(B,\psi)^*$ with the left tensorand $B$ instead of the right one, then $m_B^\dagger$ corresponds to the right regular representation.
The Frobenius equality (\ref{Frobeniuseq}) means that the left and right regular representations are $*$-homomorphisms.

\subsection{Quantum graphs}

Recall that a simple (i.e., without multiple edges) finite classical graph is a pair $(V,E)$ of finite vertex set $V$ and edge set $E\subseteq V \times V$, and the adjacency operator of $(V,E)$ is the matrix $A\in M_V(\C)\cong B(C(V))$ such that the $(v,w)$ entry $A_{v,w}=1$ if $(v,w)\in E$ and $A_{v,w}=0$ otherwise,
where $M_V(\C)$ denotes the matrix algebra whose raws and columns are indexed by $V$.
This notion of adjacency operator is generalized on quantum sets as follows.

\begin{dfn}[{Musto, Reutter, Verdon \cite[Definition 5.1]{Musto2018compositional}, Brannan et al. \cite[Definition 3.4]{Brannan2019bigalois}}]
We define a \emph{quantum adjacency operator} on a quantum set $(B,\psi)$ as an operator $A: B \to B$ satisfying \emph{Schur idempotence}
\begin{align}
	\begin{sd}
	\node[circle,draw] (G1) {$A$};
	\node[circle,draw,right=\hsep/2 of G1)] (G2) {$A$};
	\draw (G1) to[out=90,in=90] coordinate[pos=1/2] (m) (G2);
	\draw (G1) to[out=-90,in=-90] coordinate[pos=1/2] (m*) (G2);
	\draw (m)--++(0,\hsep/2);
	\draw (m*)--++(0,-\hsep/2);
	\end{sd}
	=
	\delta^2
	\begin{sd}
	\node[circle,draw] (G1) {$A$};
	\draw (G1)--++(0,\hsep);
	\draw (G1)--++(0,-\hsep);
	\end{sd},
	\textrm{ i.e., } 
	m(A \otimes A)m^\dagger = \delta^2 A,	\label{Schuridemp}
\end{align}
and then we call $\mathcal{G}=(B,\psi,A)$, or simply $A$, a quantum graph on $(B,\psi)$.

We say that a quantum graph $(B,\psi,A)$ is
\begin{itemize}
\item \emph{self-adjoint} if $A$ is self-adjoint $A^\dagger=A$;

\item \emph{self-transpose} if 
$\begin{sd}
	\node[circle,draw] (G) {$A$};
	\draw (G.north) to[out=90,in=90] ([xshift=-\hsep/2]G.north) --++(0,-3/4);
	\draw (G.south) to[out=-90,in=-90] ([xshift=\hsep/2]G.south) --++(0,3/4);
\end{sd}=\begin{sd}
	\node[circle,draw] (G1) {$A$};
	\draw (G1)--++(0,\hsep/2);
	\draw (G1)--++(0,-\hsep/2);
\end{sd}
\ (\overset{\textrm{(\ref{snakeeq})}}{\iff}
\begin{sd}
	\node[circle,draw] (G1) {$A$};
	\draw (G1)--++(0,\hsep/2);
	\draw (G1)--++(0,-\hsep/2);
\end{sd}
=
\begin{sd}
	\node[circle,draw] (G) {$A$};
	\draw (G.north) to[out=90,in=90] ([xshift=\hsep/2]G.north) --++(0,-3/4);
	\draw (G.south) to[out=-90,in=-90] ([xshift=-\hsep/2]G.south) --++(0,3/4);
\end{sd})$;

\item \emph{real} (\emph{$*$-preserving}) if $Ab^*=(Ab)^* \ \forall b\in B$
$\ (\overset{\textrm{Lemma \ref{lem:*-pres}}}{\iff} \begin{sd}
	\node[circle,draw] (G) {$A^\dagger$};
	\draw (G.north) to[out=90,in=90] ([xshift=-\hsep/2]G.north) --++(0,-3/4);
	\draw (G.south) to[out=-90,in=-90] ([xshift=\hsep/2]G.south) --++(0,3/4);
\end{sd}=\begin{sd}
	\node[circle,draw] (G1) {$A$};
	\draw (G1)--++(0,\hsep/2);
	\draw (G1)--++(0,-\hsep/2);
\end{sd})$; 

\item \emph{undirected} if $A$ is real and self-adjoint.

\item \emph{reflexive} if $\begin{sd}
	\node[circle,draw] (G1) {$A$};
	\draw ([xshift=\hsep/2]G1.south)--([xshift=\hsep/2]G1.north);
	\draw (G1) to[out=90,in=90] coordinate[pos=1/2] (m) 
([xshift=\hsep/2]G1.north);
	\draw (G1) to[out=-90,in=-90] coordinate[pos=1/2] (m*) ([xshift=\hsep/2]G1.south);
	\draw (m)--++(0,\hsep/3);
	\draw (m*)--++(0,-\hsep/3);
\end{sd}
=
\delta^2
\begin{sd}
	\draw (0,0)--++(0,1.5);
\end{sd}$, 
and \emph{irreflexive} if 
$\begin{sd}
	\node[circle,draw] (G1) {$A$};
	\draw ([xshift=\hsep/2]G1.south)--([xshift=\hsep/2]G1.north);
	\draw (G1) to[out=90,in=90] coordinate[pos=1/2] (m) ([xshift=\hsep/2]G1.north);
	\draw (G1) to[out=-90,in=-90] coordinate[pos=1/2] (m*) ([xshift=\hsep/2]G1.south);
	\draw (m)--++(0,\hsep/3);
	\draw (m*)--++(0,-\hsep/3);
\end{sd}
=0$.
\end{itemize}
\end{dfn}

\begin{rmk}
In the classical case $(\C^n,\tau)$, 
\emph{Schur product} $f \bullet g$ of operators $f,g \in M_n \cong B(\C^n)$ is defined as the entrywise product. In fact it is realized as 
\[
f \bullet g =m(f \otimes g)m^\dagger/\delta^2 .
\]
 That is why the condition (\ref{Schuridemp}) is called Schur idempotence. Since a matrix is Schur idempotent if and only if it is $\{0,1\}$-valued, quantum graphs $A$ on $(\C^n,\tau)$ are exactly equal to the adjacency operators of classical multiplicity-free graphs on $n$ vertices. 
Then $A$ is always real because $A$ is real-valued and $*$ is just a complex conjugate, and $A$ is undirected if and only if the graph $(V, E)$ is undirected, i.e., any edge $(v,w)\in E$ has its opposite $(w,v)\in E$. 
It seems natural to call the graph symmetric instead of self-transpose, but it is confusing with symmetricity (traciality) of a quantum set, so we use the term self-transpose. 
A classical graph is called reflexive if it has all self-loops $(v,v)$ for $v\in V$, and irreflexive if it has no self-loop. 
Thus the reflexivity is characterized by the Schur product of $A$ and $\id_{\C^n}$,
which outputs the diagonal entries $A_{v,v}$.
\end{rmk}

\begin{rmk}
The notion of quantum adjacency operator is first introduced by  
Musto, Reutter, Verdon \cite[Definition 5.1]{Musto2018compositional}, 
who defined undirected quantum graphs on tracial quantum sets. 
Following \cite{Musto2018compositional},  Brannan et al. \cite[Definition 3.4]{Brannan2019bigalois} defined
undirected quantum graphs on general quantum sets.
The weakest definition assigning only Schur idempotence appears in Brannan, Eifler, Voigt, Weber \cite[Definition 3.3]{Brannan2020quantum}.
\end{rmk}

\begin{lem}\label{lem:sa-undi-*pres}
Let $(B,\psi,A)$ be a quantum graph.
Every couple of the following three conditions imply the other.
Equivalently, all couples of the following are equivalent to each other.
\begin{description}
\item[$(1)$]
$A$ is self-adjoint;
\item[$(2)$]
$A$ is self-transpose;
\item[$(3)$]
$A$ is real.
\end{description}
In particular, $A$ is undirected if and only if (1), (2), and (3) hold. 
\end{lem}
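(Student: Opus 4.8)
The plan is to read the three conditions as the diagrammatic analogues of the matrix identities \emph{Hermitian}, \emph{symmetric}, and \emph{real}, and to exploit the fact that the corresponding three involutions on operators are not independent: each is the composite of the other two. Write $A^T$ for the transpose of $A$ obtained by bending its output wire up and to the left and its input wire down and to the right (the left-hand side of the self-transpose condition). Then the three conditions read
\[
(1)\ A^\dagger=A, \qquad (2)\ A^T=A, \qquad (3)\ (A^\dagger)^T=A,
\]
where the reformulation of realness as $(A^\dagger)^T=A$ is precisely Lemma \ref{lem:*-pres}: the wire-bending appearing there is the same operation $T$ as in the self-transpose condition, applied to $A^\dagger$ rather than to $A$. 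Thus (1) is fixedness under the adjoint $\dagger$, (2) fixedness under the transpose $T$, and (3) fixedness under the composite $T\circ\dagger$. Since $\dagger$ is conjugate-linear and $T$ is linear, this composite is the conjugate-linear \emph{real structure}, in perfect analogy with the matrix identity $A^\dagger=\overline{A^T}$.

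Two of the three implications are then immediate by substitution, using no property of $T$ beyond its definition. If (1) and (2) hold, then $(A^\dagger)^T=A^T=A$, which is (3). If (1) and (3) hold, then $A^T=(A^\dagger)^T=A$, which is (2); here the first equality uses $A^\dagger=A$ from (1).

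For the remaining implication I would suppose (2) and (3). Combining them gives $(A^\dagger)^T=A=A^T$, so $A^\dagger$ and $A$ have the same transpose, and to conclude $A^\dagger=A$ it suffices to know that $T$ is injective. This is the only genuine step: I would show that $T$ is a bijection on operators $B\to B$, with inverse the oppositely bent transpose $T'$ (bending the output up to the right and the input down to the left), the identities $T'\circ T=\id=T\circ T'$ following from the two forms of the snake equation (\ref{snakeeq}), since composing a bend with the reverse bend produces exactly the zigzags straightened there. I expect this to be the main obstacle, precisely because the paper has stressed that in the nontracial case wires may not be deformed through a cusp: one must check that the composite $T'\circ T$ straightens via an honest snake (matching cup and cap) rather than through a forbidden cusp. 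Note that $T^2$ itself need not be the identity in the nontracial case — by Lemma \ref{balanceloop} it introduces a conjugation by $Q$ — but this is harmless, as conjugations are bijective and only the injectivity of $T$ is used.
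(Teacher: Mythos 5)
Your proposal is correct and follows essentially the same route as the paper: the identical reformulation of (1), (2), (3) as fixed-point equations under the adjoint, the transpose $T$, and their (conjugate-linear) composite, with the implications $(1)(2)\Rightarrow(3)$ and $(1)(3)\Rightarrow(2)$ proved by the very same substitutions. The only difference is in $(2)(3)\Rightarrow(1)$, where the paper takes the Hilbert adjoint of the reality condition and then uses the $T'$-form of self-transposeness, whereas you cancel $T$ via $T'\circ T=\id$; both steps rest on the same snake equation (\ref{snakeeq}), and your care in checking that the cancellation is a genuine zigzag rather than a deformation through a cusp (and that $T^2\neq\id$, being conjugation by $Q$, is irrelevant) is sound.
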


\begin{proof}
$(1) (2)\implies (3)$ We have
\[
\begin{sd}
	\node[circle,draw] (A) {$A^\dagger$};
	\draw (A.north) to[out=90,in=90] ([xshift=-\hsep/2]A.north) --++(0,-2/3);
	\draw (A.south) to[out=-90,in=-90] ([xshift=\hsep/2]A.south) --++(0,2/3);
\end{sd}
\overset{(1)}{=}
\begin{sd}
	\node[circle,draw] (A) {$A$};
	\draw (A.north) to[out=90,in=90] ([xshift=-\hsep/2]A.north) --++(0,-2/3);
	\draw (A.south) to[out=-90,in=-90] ([xshift=\hsep/2]A.south) --++(0,2/3);
\end{sd}
\overset{(2)}{=}
\begin{sd}
	\node[circle,draw] (A1) {$A$};
	\draw (A1)--++(0,\hsep/2);
	\draw (A1)--++(0,-\hsep/2);
\end{sd}.
\]
Thus $A$ is real.

\noindent $(2) (3)\implies (1)$
By using the Hilbert adjoint of real condition, we have 
\[
\begin{sd}
	\node[circle,draw] (A1) {$A^\dagger$};
	\draw (A1)--++(0,\hsep/2);
	\draw (A1)--++(0,-\hsep/2);
\end{sd}
\overset{(3)}{=}
\begin{sd}
	\node[circle,draw] (A) {$A$};
	\draw (A.north) to[out=90,in=90] ([xshift=\hsep/2]A.north) --++(0,-2/3);
	\draw (A.south) to[out=-90,in=-90] ([xshift=-\hsep/2]A.south) --++(0,2/3);
\end{sd}
\overset{(2)}{=}
\begin{sd}
	\node[circle,draw] (A1) {$A$};
	\draw (A1)--++(0,\hsep/2);
	\draw (A1)--++(0,-\hsep/2);
\end{sd}.
\]
Thus $A$ is self-adjoint.

\noindent $(3) (1)\implies (2)$
We have
\[
\begin{sd}
	\node[circle,draw] (A) {$A$};
	\draw (A.north) to[out=90,in=90] ([xshift=-\hsep/2]A.north) --++(0,-2/3);
	\draw (A.south) to[out=-90,in=-90] ([xshift=\hsep/2]A.south) --++(0,2/3);
\end{sd}
\overset{(1)}{=}
\begin{sd}
	\node[circle,draw] (A) {$A^\dagger$};
	\draw (A.north) to[out=90,in=90] ([xshift=-\hsep/2]A.north) --++(0,-2/3);
	\draw (A.south) to[out=-90,in=-90] ([xshift=\hsep/2]A.south) --++(0,2/3);
\end{sd}
\overset{(3)}{=}
\begin{sd}
	\node[circle,draw] (A1) {$A$};
	\draw (A1)--++(0,\hsep/2);
	\draw (A1)--++(0,-\hsep/2);
\end{sd}.
\]
Thus $A$ is self-transpose.
\end{proof}

Recall that an operator $A:B \to B'$ between $C^*$-algebras is called positive if it preserves the positive cone consisting of positive semidefinite elements $A(B^+)\subset {B'}^+$, and called completely positive if its amplification $A\otimes \id_{M_n}:B\otimes M_n \cong M_n(B) \to B'\otimes M_n \cong M_n(B')$ is positive for arbitrary positive integer $n$.
We can deduce the following equivalence from Schur idempotence.

\begin{prop}
Let $\mathcal{G}=(B,\psi,A)$ be a quantum graph. TFAE:
\begin{description}
\item[$(1)$] $A$ is real;
\item[$(2)$] $A$ is positive;
\item[$(3)$] $A$ is completely positive.
\end{description}
\end{prop}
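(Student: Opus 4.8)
The plan is to prove $(3)\Rightarrow(2)\Rightarrow(1)$ by soft $C^*$-arguments that do not touch Schur idempotence, and then to close the cycle with the substantial implication $(1)\Rightarrow(3)$, where \eqnref{Schuridemp} is essential. For $(3)\Rightarrow(2)$ I simply specialize the definition of complete positivity to $n=1$, so that the amplification $A\otimes\id_{M_1}=A$ is positive. For $(2)\Rightarrow(1)$ I use that a positive map preserves self-adjointness: writing a self-adjoint $x\in B$ as $x=x_+-x_-$ with $x_\pm\ge 0$ via functional calculus, positivity gives $A(x)=A(x_+)-A(x_-)$ self-adjoint; decomposing a general $b\in B$ into its self-adjoint and skew-adjoint parts then yields $A(b^*)=A(b)^*$, i.e.\ $A$ is real in the sense of Lemma \ref{lem:*-pres}. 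As it must be, neither step uses \eqnref{Schuridemp}, since a real operator need not be positive without idempotence.

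The heart of the matter is $(1)\Rightarrow(3)$, which I would treat through the Choi correspondence $A\mapsto\hat A$ sending $A$ to (a $Q$-weighted version of) its Choi matrix in $B\otimes B$, arranged so that three translations hold for the \emph{same} element $\hat A$: (i) $A$ is completely positive iff $\hat A\ge 0$; (ii) $A$ is real iff $\hat A=\hat A^*$, since Hermiticity preservation corresponds to a Hermitian Choi matrix, compatibly with Lemma \ref{lem:*-pres}; and (iii) Schur idempotence \eqnref{Schuridemp} holds iff $\hat A$ is idempotent, $\hat A^2=\hat A$. Granting these, a real Schur-idempotent $A$ produces a self-adjoint idempotent $\hat A=\hat A^*=\hat A^2$, that is, an orthogonal projection; since $\hat A=\hat A^*\hat A\ge 0$, property (i) forces $A$ to be completely positive, closing the cycle.

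The hard part will be arranging (i)--(iii) for one and the same $\hat A$ in the nontracial $\delta$-form setting. Writing $B=\bigoplus_s M_{n_s}$ with matrix units $e_{ij,s}$, the naive Choi matrix $\hat A=\sum e_{ij,s}\otimes A(e_{ij,s})$ already realizes (i) and (ii) verbatim, and these are independent of $Q$; the difficulty is that by Lemma \ref{lem:1psimm*} the comultiplication $m^\dagger$ carries the weights $Q^{-1}$, so the Schur product $m(A\otimes C)m^\dagger/\delta^2$ corresponds not to the plain product of Choi matrices but to a $Q$-twisted product inserting $Q^{-1}$ in the first tensor leg, which spoils (iii) as stated. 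I would absorb this twist by the congruence $\rho(X)=Q^{-1/2}XQ^{-1/2}$ on the first leg and pass to $\hat A^{\mathrm{tw}}=(\rho\otimes\id)(\hat A)$: because $\rho$ is a $*$-isomorphism onto the ordinary matrix product and a congruence by the invertible self-adjoint $Q^{-1/2}$, it turns the twisted product into the honest one, restoring (iii), while preserving both the positive cone and the involution, hence keeping (i) and (ii). Diagrammatically this twist is precisely the balancing loop $Q^{\mp1}(\cdot)Q^{\pm1}$ of Lemma \ref{balanceloop} that distinguishes the nontracial identifications behind Corollary \ref{cor:B(B)}. Verifying that the three correspondences survive the twist \emph{simultaneously} is the one place that demands care, but since a congruence preserves positivity and self-adjointness, $\hat A^{\mathrm{tw}}$ is a genuine projection and the argument goes through.
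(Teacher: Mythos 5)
Your proposal is correct: the easy implications $(3)\Rightarrow(2)\Rightarrow(1)$ are handled exactly as in the paper, but your proof of the substantive implication $(1)\Rightarrow(3)$ takes a genuinely different route.

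The paper argues directly: it first reduces complete positivity to positivity by asserting that $A\otimes\id_{M_n}$ is again a real quantum graph on $(B\otimes M_n,\psi\otimes\tau_{M_n})$, and then inserts Schur idempotence $A=\delta^{-2}m(A\otimes A)m^\dagger$ into $A(x^*x)$, deforms the diagram, and resolves the middle identity wire by an ONB $\{b_i\}$ of $L^2(B,\psi)$ to obtain the explicit certificate
\begin{align*}
A(x^*x)=\delta^{-2}\sum_i A(x^*b_i)A(b_i^*x)=\delta^{-2}\sum_i A(b_i^*x)^*A(b_i^*x)\ \geq\ 0,
\end{align*}
using reality only in the last step. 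Your route instead encodes all three properties in the Choi matrix: with $m^\dagger$ as in Lemma \ref{lem:1psimm*} one computes, exactly as you predict, that the Schur product corresponds to $\widehat{A\bullet A'}=\delta^{-2}\,\hat A\,(Q^{-1}\otimes 1)\,\hat A'$, so conjugating the first leg by $Q^{-1/2}$ converts this twisted product into the ordinary one while preserving positivity and Hermiticity; a real Schur-idempotent $A$ then yields a self-adjoint idempotent, hence a positive element, hence $A$ is completely positive by Choi's theorem (which holds blockwise for $B=\bigoplus_s M_{n_s}$). One small normalization to fix when writing this up: your unnormalized twist $(\rho\otimes\id)(\hat A)$ satisfies $X^2=\delta^2X$ rather than $X^2=X$, so either put the factor $\delta^{-2}$ into the twist to get an honest projection, or note that a self-adjoint $X$ with $X^2=\delta^2X$ has spectrum in $\{0,\delta^2\}$ and is positive anyway; either way the conclusion is unaffected. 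As for what each approach buys: the paper's argument is self-contained within its diagrammatic calculus and produces an explicit Kraus-type positivity certificate, but it leans on the (asserted, not proved) fact that amplifications of quantum graphs are quantum graphs; yours outsources the analytic core to the standard Choi correspondence, avoids the amplification step entirely, and establishes along the way the structural dictionary between real Schur-idempotent operators and ($Q$-twisted) projections in $B\otimes B$ --- precisely the viewpoint the introduction credits to Gromada in the tracial $M_2$ case, and useful beyond this proposition, e.g.\ for classification.
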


\begin{proof}
$(3)\implies(2)$ Obvious by definition. $(2)\implies(1)$ Since the positive cone $B^+$ of $B$ spans the subspace $B^{sa}$ of self-adjoint operators, $A(B^+)\subset B^+$ implies $A(B^{sa})\subset B^{sa}$, i.e., 
$A$ is real.

$(1)\implies(3)$ 
Assume that $A$ is a real quantum graph on $(B,\psi)$.
Then $A \otimes \id_{M_n}$ is also a quantum graph on $(B\otimes M_n, \psi \otimes \tau_{M_n})$ for arbitrary $n$, and it is real since the involution is tensorandwise $(b\otimes x)^*=b^* \otimes x^*$ in $B\otimes M_n$.
Replacing $(B\otimes M_n, \psi \otimes \tau_{M_n}, A\otimes \id_{M_n})$ by $(B,\psi,A)$, it suffices to show that $A$ is positive. We take an arbitrary $x \in B$ and check that $A(x^* x)$ is positive semidefinite:
\begin{align*}
A(x^* x)
=
\begin{sd}  
	\coordinate (H3) at (0,-\hsep);  
	\coordinate (H1) at (0,0)  ;
	\path (H3) node[draw,below] (x){$x^*x$};
	\draw (H3) --(H1)node[midway,circle,draw,fill=white](A){$A$} ;
\end{sd}
=
\begin{sd}
	\coordinate (H1) at (-\hsep/2,0);  
	\coordinate (H2) at (\hsep/2,0)  ;
	\coordinate (H3) at (0,\hsep);
	\path (H1)node[draw,below](x'){$x^*$} (H2)node[draw,below](x){$x$} ;
	\draw (H2) to[out=90,in=90] coordinate[midway] (m) (H1) ;
	\draw (m)--(H3)node[midway,circle,draw,fill=white](A){$A$};
\end{sd}
=
\delta^{-2}
\begin{sd}
	\coordinate (H1) at (-\hsep/2,0);  
	\coordinate (H2) at (\hsep/2,0)  ;
	\coordinate (H3) at (0,8/5*\hsep);
	\path (H1)node[draw,below](x'){$x^*$} (H2)node[draw,below](x){$x$} 
		(-\hsep/3,\hsep)node[circle,draw](A1){$A$} 
		(\hsep/3,\hsep)node[circle,draw](A2){$A$};
	\draw (H2) to[out=90,in=90] coordinate[midway] (m1) (H1) ;
	\draw (A2.south) to[out=-90,in=-90] coordinate[midway] (m2) (A1.south) ;
	\draw (A2.north) to[out=90,in=90] coordinate[midway] (m3) (A1.north) ;
	\draw (m1)--(m2) (m3)--(H3);
\end{sd}
=
\delta^{-2}
\begin{sd}
	\coordinate (H1) at (-\hsep/2,0);  
	\coordinate (H2) at (\hsep/2,0)  ;
	\coordinate (H3) at (0,8/5*\hsep);
	\path (H1)node[draw,below](x'){$x^*$} (H2)node[draw,below](x){$x$} 
		(-\hsep/2,\hsep)node[circle,draw](A1){$A$} 
		(\hsep/2,\hsep)node[circle,draw](A2){$A$}
		($(x')!0.5!(A2)$)coordinate(id);
	\draw (H1)--++(0,\hsep/4) to[out=90,in=90] coordinate[midway] (m1) (id) ;
	\draw (A2.south)--++(0,-\hsep/4) to[out=-90,in=-90] coordinate[midway] (m2) (id) ;
	\draw (A2) to[out=100,in=80] coordinate[midway] (m3) (A1) ;
	\draw (m1)--(A1) (m2)--(x) (m3)--(H3);
\end{sd}.
\end{align*}
Decomposing the identity string in the middle of the diagram into 
$\id_B=\sum_i \ket{b_i}\bra{b_i}$ by an ONB $\{b_i\}_{i=1}^\abs{B}$ for $L^2(B,\psi)$, we obtain
\begin{align*}
A(x^*x)
&=
\delta^{-2} \sum_i
\begin{sd}
	\coordinate (H1) at (-\hsep*2/3,0);  
	\coordinate (H2) at (\hsep*2/3,0)  ;
	\coordinate (H3) at (0,8/5*\hsep);
	\path (H1)node[draw,below](x'){$x^*$}
		(H2)node[draw,below](x){$x$} 
		(-\hsep*2/3,\hsep)node[circle,draw](A1){$A$} 
		(\hsep*2/3,\hsep)node[circle,draw](A2){$A$}
		($(x')!0.5!(A2)+(0,\hsep*0.4)$)node[draw](b'){$b_i^\dagger$}
		($(x')!0.5!(A2)-(0,\hsep*0.4)$)node[draw](b){$b_i$};
	\draw (H1)--++(0,\hsep/4) to[out=90,in=90]
			 coordinate[pos=0.4](m1)(b) ;
	\draw (A2.south)--++(0,-\hsep/4) to[out=-90,in=-90]
			  coordinate[pos=0.4] (m2)(b') ;
	\draw (A2) to[out=120,in=60] coordinate[midway] (m3) (A1) ;
	\draw (m1)--(A1) (m2)--(x) (m3)--(H3);
\end{sd}
=
\delta^{-2} \sum_i
\begin{sd}
	\coordinate (H1) at (-\hsep/2,0);  
	\coordinate (H2) at (\hsep/2,0)  ;
	\coordinate (H3) at (0,\hsep*4/3);
	\path (H1)node[draw,below](x'){$x^* b_i$}
		(H2)node[draw,below](x){$b_i^* x$} 
		(-\hsep/2,\hsep/2)node[circle,draw](A1){$A$} 
		(\hsep/2,\hsep/2)node[circle,draw](A2){$A$};
	\draw (A2) to[out=90,in=90] coordinate[midway] (m3) (A1) ;
	\draw (x')--(A1) (A2)--(x) (m3)--(H3);
\end{sd}
\\
&=
\delta^{-2} \sum_i A(x^* b_i) A(b_i^* x)
=
\delta^{-2} \sum_i A(b_i^* x)^* A(b_i^* x) \geq0.
\end{align*}
Therefore $A$ is positive.
\end{proof}

Recall that the \emph{indegree} (resp, \emph{outdegree}) of a vertex of a classical directed graph is the number of edges into (resp. out of) the vertex, and the graph is called \emph{$d$-regular} if the indegrees and outdegrees of all vertices are equal to $d$. Note that indegree and outdegree coincide and are called the degree if the graph is undirected. Recall also that a classical graph is $d$-regular if and only if the adjacency operator $A$ and $A^\dagger$ have the constant function $1$ as an eigenvector of eigenvalue $d$. This notion can be generalized for quantum graphs:

\begin{dfn}
A quantum graph $\mathcal{G}=(B,\psi,A)$ is \emph{$d$-regular} if
\[
A1=d 1 \text{   and   } \psi A=d\psi
\]
are satisfied, and then $d$ is called the \emph{degree} of $\mathcal{G}$.
\end{dfn}

Here we exhibit typical examples of quantum graphs.
\begin{ex}[{cf. Brannan et al. \cite[Remark 3.6]{Brannan2019bigalois}}]
\begin{itemize}
\item
Let $(V,E)$ be a simple classical graph, and $A: C(V)\to C(V)$ the adjacency matrix. Then $(C(V), \tau,A)$ is a quantum graph. 
\item
Let $(B,\psi)$ be a quantum set. The (reflexive) \emph{complete graph} on $(B,\psi)$ is given by $A=\delta^2 \psi(\cdot)1$, which is an undirected reflexive $\delta^2$-regular quantum graph. Indeed the definition of the unit and counit shows
\begin{align*}
	\begin{sd}
	\node (G1) {$\phantom{A}$};
	\node[right=0cm of G1] (G2) {$\phantom{A}$};
	\draw (G1.north) arc(90:-270:\unode) to[out=90,in=90] 
			coordinate[pos=1/2] (m) (G2.north) arc(90:-270:\unode);
	\draw (G1.south) arc(-90:270:\unode) to[out=-90,in=-90] 
			coordinate[pos=1/2] (m*) (G2.south) arc(-90:270:\unode);
	\draw (m)--++(0,\hsep/4);
	\draw (m*)--++(0,-\hsep/4);
	\end{sd}
	&=
	\begin{sd}
	\node (G1) {$\phantom{A}$};
	\draw (G1.north) arc(90:-270:\unode)--++(0,\hsep/4);
	\draw (G1.south) arc(-90:270:\unode)--++(0,-\hsep/4);
	\end{sd};
	&
	\begin{sd}
	\node (G1) {$\phantom{A}$};
	\draw (G1.north) arc(90:-270:\unode) to[out=90,in=90] 
		([xshift=-\hsep/2]G1.north)--++(0,-\hsep/2);
	\draw (G1.south) arc(-90:270:\unode) to[out=-90,in=-90] 
		([xshift=\hsep/2]G1.south)--++(0,\hsep/2);
	\end{sd}
	&=
	\begin{sd}
	\node (G1) {$\phantom{A}$};
	\draw (G1.north) arc(90:-270:\unode)--++(0,\hsep/4);
	\draw (G1.south) arc(-90:270:\unode)--++(0,-\hsep/4);
	\end{sd};
	&
	\left(\begin{sd}
	\node (G1) {$\phantom{A}$};
	\draw (G1.north) arc(90:-270:\unode)--++(0,\hsep/4);
	\draw (G1.south) arc(-90:270:\unode)--++(0,-\hsep/4);
	\end{sd}\right)^\dagger
	&=
	\begin{sd}
	\node (G1) {$\phantom{A}$};
	\draw (G1.north) arc(90:-270:\unode)--++(0,\hsep/4);
	\draw (G1.south) arc(-90:270:\unode)--++(0,-\hsep/4);
	\end{sd};
	&
	\begin{sd}
	\node (G1) {$\phantom{A}$};
	\node[right=0cm of G1] (G2) {$\phantom{A}$};
	\draw (G1.north) arc(90:-270:\unode) to[out=90,in=90] 
			coordinate[pos=1/2] (m) (G2.north) --(G2.south);
	\draw (G1.south) arc(-90:270:\unode) to[out=-90,in=-90] 
			coordinate[pos=1/2] (m*) (G2.south) ;
	\draw (m)--++(0,\hsep/4);
	\draw (m*)--++(0,-\hsep/4);
	\end{sd}
	&=
	\begin{sd}
	\draw (0,0)--++(0,\hsep);
	\end{sd}.
\end{align*}
In classical caces $\abs{B}\tau(\cdot)1=\Tr(\cdot)1$ is the matrix with all entries one, which is the reflexive complete graph.
By Proposition \ref{prop:ref-irref}, the irreflexive quantum complete graph is 
$A=\delta^2 \psi(\cdot)1-\id_B$.

\item
Let $(B,\psi)$ be a quantum set. The \emph{trivial graph} on $(B,\psi)$ is given by $A=\id_B$, which is an undirected reflexive $1$-regular quantum graph.
This follows from the specialty (\ref{special}) and snake equation (\ref{snakeeq}).
The trivial graph is the reflexive complement (defined in Proposition \ref{prop:reflexcomple}) of the complete graph.
In classical cases, the trivial graph is the graph with only the self-loops.
Its irreflexive counterpart as in Proposition \ref{prop:ref-irref} is $A=0$.

\end{itemize}
\end{ex}

\begin{prop}\label{prop:ref-irref}
Irreflexive real quantum graphs $A_\mathrm{irref}$ on $(B,\psi)$ have one to one correspondence with reflexive real quantum graphs $A_\mathrm{ref}$ via
\[
A_\mathrm{irref}+\id_B=A_\mathrm{ref}.
\] 
Thus their spectra satisfy 
$\Spec(A_\mathrm{irref})+1=\Spec(A_\mathrm{ref})$.
\end{prop}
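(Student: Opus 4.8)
The plan is to treat the correspondence $A_\mathrm{irref}\leftrightarrow A_\mathrm{irref}+\id_B$ by verifying that each of the three defining properties---reality, the diagonal (ir)reflexivity condition, and Schur idempotence (\ref{Schuridemp})---is transported across the shift by $\id_B$. Since $A\mapsto A\pm\id_B$ is manifestly a bijection between operators $B\to B$, it suffices to show that it carries each class into the other in both directions. The spectral statement $\Spec(A_\mathrm{irref})+1=\Spec(A_\mathrm{ref})$ is then immediate from $A_\mathrm{ref}=A_\mathrm{irref}+\id_B$, so the content lies entirely in preserving the three structural properties.

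First I would dispose of reality. The trivial graph $\id_B$ is real, since bending it through the snake equation (\ref{snakeeq}) returns $\id_B$, so it satisfies the criterion (\ref{eqn:f*-pres}) of Lemma \ref{lem:*-pres}. Because that criterion is linear in the operator, $A\pm\id_B$ is real if and only if $A$ is, and reality is preserved in both directions.

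Next is the diagonal condition. Writing $\Phi(X):=m(X\otimes\id_B)m^\dagger$ for the left-leg map appearing in the definitions of reflexive and irreflexive, $\Phi$ is linear and $\Phi(\id_B)=mm^\dagger=\delta^2\id_B$ by specialty (\ref{special}). Hence $\Phi(A_\mathrm{irref})=0$ yields $\Phi(A_\mathrm{irref}+\id_B)=\delta^2\id_B$, i.e. reflexivity, while $\Phi(A_\mathrm{ref})=\delta^2\id_B$ yields $\Phi(A_\mathrm{ref}-\id_B)=0$, i.e. irreflexivity.

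The substantial step is Schur idempotence. Expanding $m\bigl((A\pm\id_B)\otimes(A\pm\id_B)\bigr)m^\dagger$ by bilinearity produces four terms: $m(A\otimes A)m^\dagger=\delta^2 A$ by (\ref{Schuridemp}), $m(\id_B\otimes\id_B)m^\dagger=\delta^2\id_B$ by (\ref{special}), the left cross term $m(A\otimes\id_B)m^\dagger=\Phi(A)$ which is $0$ or $\delta^2\id_B$ by the previous paragraph, and the right cross term $m(\id_B\otimes A)m^\dagger$. Everything therefore reduces to the following sub-lemma, which I expect to be the main obstacle: for a real $A$, the right-leg diagonal $m(\id_B\otimes A)m^\dagger$ coincides with $\Phi(A)$ whenever $\Phi(A)$ equals $0$ or $\delta^2\id_B$. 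This is not automatic, since the two bubbles genuinely differ for generic $A$. Using Lemma \ref{lem:1psimm*} one computes, block by block, that $\Phi(A)$ is always left multiplication by some $L=\bigoplus_s L_s\in B$, whereas $m(\id_B\otimes A)m^\dagger$ is right multiplication by some element of $B$; reality, i.e. $Ab^*=(Ab)^*$ as in (\ref{eqn:f*-pres}), forces that element to be exactly $L^*$. Since $L=0$ and $L=\delta^2 1_B$ are self-adjoint and central, left and right multiplication by them agree, giving $0$ respectively $\delta^2\id_B$ in both cases. Diagrammatically this is the statement that reality permits bending $A$ from the right leg to the left leg through a cusp at the cost of the conjugation $Q^{-1}(\cdot)Q$ of Lemma \ref{balanceloop} together with balanced symmetry (\ref{balancedsym}), and the two values $0,\delta^2\id_B$ are precisely those left invariant by that conjugation. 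Feeding this back, the cross terms combine so that $m\bigl((A\pm\id_B)\otimes(A\pm\id_B)\bigr)m^\dagger=\delta^2(A\pm\id_B)$, which establishes Schur idempotence and completes both directions of the correspondence.
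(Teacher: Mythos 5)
Your proposal is correct, and it shares the paper's skeleton: reality and the diagonal condition transfer by linearity together with specialty (\ref{special}), and Schur idempotence reduces, after the four-term expansion, to controlling the mixed term $m(\id_B\otimes A)m^\dagger$. Where you genuinely differ is in how that term is handled. The paper never leaves the string calculus: it rewrites $A$ on the right leg as the doubly bent $A^\dagger$ (reality, Lemma \ref{lem:*-pres}) and then deforms via associativity, coassociativity, the snake equation (\ref{snakeeq}) and the Frobenius equation (\ref{Frobeniuseq}) until the (adjoint of the) irreflexivity relation annihilates the diagram, treating the converse direction ``similarly''. You instead prove a structural sub-lemma: $m(A\otimes\id_B)m^\dagger$ is left multiplication by $L$, $m(\id_B\otimes A)m^\dagger$ is right multiplication by some $R$, and reality forces $R=L^*$. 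That sub-lemma is true, and besides your block-by-block route through Lemma \ref{lem:1psimm*} it has a coordinate-free proof: writing $m^\dagger(1)=\sum_k u_k\otimes v_k$, the Frobenius equation gives $m^\dagger(x)=\sum_k xu_k\otimes v_k=\sum_k u_k\otimes v_kx$, whence $L=\sum_k A(u_k)v_k$ and $R=\sum_k u_kA(v_k)$; the characterization $\braket{x\otimes y\,|\,m^\dagger 1}=\psi(y^*x^*)$ shows $\sum_k u_k\otimes v_k=\sum_k v_k^*\otimes u_k^*$, so reality yields $R=\sum_k v_k^*A(u_k)^*=L^*$. Your route buys uniformity (both directions follow at once) and an explanation of why precisely the values $0$ and $\delta^2 1_B$ work --- they are the self-adjoint central multipliers, for which left multiplication by $L$ and right multiplication by $L^*$ coincide --- while the paper's route buys economy, staying purely diagrammatic in keeping with its theme. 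The one soft spot is that you assert rather than carry out the verification that reality forces $R=L^*$; as the computation above shows this is routine, so it is a presentational rather than a mathematical gap.
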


\begin{rmk}
The correspondence between reflexive and irreflexive quantum graphs also holds for self-transpose quantum graphs. Its proof is the same except the replacement of real condition by self-transpose condition.
\end{rmk}

\begin{proof}
Let $A=A_\mathrm{irref}$ be an irreflexive real quantum graph on $(B,\psi)$. 
We show that $A_\mathrm{ref}=A+\id_B$ is a reflexive real quantum graph. 
The reality of $A$ and $\id_B$ shows that $A_\mathrm{ref}$ is also real by linearity.
The reflexivity follows by
\[
\begin{sd}
	\node[draw] (A1) {$A_\mathrm{ref}$};
	\draw ([xshift=\hsep]A1.south)--([xshift=\hsep]A1.north);
	\draw (A1) to[out=90,in=90] coordinate[pos=1/2] (m) ([xshift=\hsep]A1.north);
	\draw (A1) to[out=-90,in=-90] coordinate[pos=1/2] (m*) ([xshift=\hsep]A1.south);
	\draw (m)--++(0,\hsep/2);
	\draw (m*)--++(0,-\hsep/2);
\end{sd}
=
\begin{sd}
	\node[circle,draw] (A1) {$A$};
	\draw ([xshift=\hsep]A1.south)--([xshift=\hsep]A1.north);
	\draw (A1) to[out=90,in=90] coordinate[pos=1/2] (m) ([xshift=\hsep]A1.north);
	\draw (A1) to[out=-90,in=-90] coordinate[pos=1/2] (m*) ([xshift=\hsep]A1.south);
	\draw (m)--++(0,\hsep/2);
	\draw (m*)--++(0,-\hsep/2);
\end{sd}
+
\begin{sd}
	\node[circle] (A1) {$\phantom{A}$};
	\draw  (A1.south)--(A1.north)
	 ([xshift=\hsep]A1.south)--([xshift=\hsep]A1.north);
	\draw (A1) to[out=90,in=90] coordinate[pos=1/2] (m) ([xshift=\hsep]A1.north);
	\draw (A1) to[out=-90,in=-90] coordinate[pos=1/2] (m*) 
		([xshift=\hsep]A1.south);
	\draw (m)--++(0,\hsep/2);
	\draw (m*)--++(0,-\hsep/2);
\end{sd}
\overset{\textrm{(irreflexive)}}{=}
0+
\delta^2
\begin{sd}
	\draw (0,0)--++(0,2);
\end{sd}.
\]
 We have by the irreflexivity that
\begin{align*}
\begin{sd}
	\node[draw] (A1) {$A_\mathrm{ref}$};
	\node[draw,right=\hsep/2 of A1)] (A2) {$A_\mathrm{ref}$};
	\draw (A1) to[out=90,in=90] coordinate[pos=1/2] (m) (A2);
	\draw (A1) to[out=-90,in=-90] coordinate[pos=1/2] (m*) (A2);
	\draw (m)--++(0,\hsep/2);
	\draw (m*)--++(0,-\hsep/2);
\end{sd}
&=
\begin{sd}
	\node[circle,draw] (A1) {$A$};
	\node[circle,draw,right=\hsep/2 of A1)] (A2) {$A$};
	\draw (A1) to[out=90,in=90] coordinate[pos=1/2] (m) (A2);
	\draw (A1) to[out=-90,in=-90] coordinate[pos=1/2] (m*) (A2);
	\draw (m)--++(0,\hsep/2);
	\draw (m*)--++(0,-\hsep/2);
\end{sd}
+
\begin{sd}
	\node[circle,draw] (A1) {$A$};
	\draw ([xshift=\hsep]A1.south)--([xshift=\hsep]A1.north);
	\draw (A1) to[out=90,in=90] coordinate[pos=1/2] (m) ([xshift=\hsep]A1.north);
	\draw (A1) to[out=-90,in=-90] coordinate[pos=1/2] (m*) ([xshift=\hsep]A1.south);
	\draw (m)--++(0,\hsep/2);
	\draw (m*)--++(0,-\hsep/2);
\end{sd}
+
\begin{sd}
	\node[circle,draw] (A1) {$A$};
	\draw ([xshift=-\hsep]A1.south)--([xshift=-\hsep]A1.north);
	\draw (A1) to[out=90,in=90] coordinate[pos=1/2] (m) ([xshift=-\hsep]A1.north);
	\draw (A1) to[out=-90,in=-90] coordinate[pos=1/2] (m*) 
		([xshift=-\hsep]A1.south);
	\draw (m)--++(0,\hsep/2);
	\draw (m*)--++(0,-\hsep/2);
\end{sd}
+
\begin{sd}
	\node[circle] (A1) {$\phantom{A}$};
	\draw  (A1.south)--(A1.north)
	 ([xshift=\hsep]A1.south)--([xshift=\hsep]A1.north);
	\draw (A1) to[out=90,in=90] coordinate[pos=1/2] (m) ([xshift=\hsep]A1.north);
	\draw (A1) to[out=-90,in=-90] coordinate[pos=1/2] (m*) 
		([xshift=\hsep]A1.south);
	\draw (m)--++(0,\hsep/2);
	\draw (m*)--++(0,-\hsep/2);
\end{sd}
\\
&=
\delta^2
\begin{sd}
	\node[circle,draw] (A1) {$A$};
	\draw (A1)--++(0,\hsep);
	\draw (A1)--++(0,-\hsep);
\end{sd}
+
\begin{sd}
	\node[circle,draw] (A1) {$A$};
	\draw ([xshift=-\hsep]A1.south)--([xshift=-\hsep]A1.north);
	\draw (A1) to[out=90,in=90] coordinate[pos=1/2] (m) ([xshift=-\hsep]A1.north);
	\draw (A1) to[out=-90,in=-90] coordinate[pos=1/2] (m*) 
		([xshift=-\hsep]A1.south);
	\draw (m)--++(0,\hsep/2);
	\draw (m*)--++(0,-\hsep/2);
\end{sd}
+
\delta^2
\begin{sd}
	\draw (0,0)--++(0,2);
\end{sd}
=
\delta^2
\begin{sd}
	\node[draw] (A1) {$A_\mathrm{ref}$};
	\draw (A1)--++(0,\hsep);
	\draw (A1)--++(0,-\hsep);
\end{sd}
+
\begin{sd}
	\node[circle,draw] (A1) {$A$};
	\draw ([xshift=-\hsep]A1.south)--([xshift=-\hsep]A1.north);
	\draw (A1) to[out=90,in=90] coordinate[pos=1/2] (m) ([xshift=-\hsep]A1.north);
	\draw (A1) to[out=-90,in=-90] coordinate[pos=1/2] (m*) 
		([xshift=-\hsep]A1.south);
	\draw (m)--++(0,\hsep/2);
	\draw (m*)--++(0,-\hsep/2);
\end{sd}.
\end{align*}
Hence it suffices to show that the final term $m(\id_B \otimes A)m^\dagger$ is zero.
Indeed reality and irreflexivity implies that
\begin{align*}
\begin{sd}
	\node[circle,draw] (A1) {$A$};
	\draw ([xshift=-\hsep]A1.south)--([xshift=-\hsep]A1.north);
	\draw (A1) to[out=90,in=90] coordinate[pos=1/2] (m) ([xshift=-\hsep]A1.north);
	\draw (A1) to[out=-90,in=-90] coordinate[pos=1/2] (m*) 
		([xshift=-\hsep]A1.south);
	\draw (m)--++(0,\hsep/2);
	\draw (m*)--++(0,-\hsep/2);
\end{sd}
\overset{(\textrm{real})}{=}
\begin{sd}
	\node[circle,draw] (A1) {$A^\dagger$};
	\draw ([xshift=-\hsep]A1.south)--([xshift=-\hsep]A1.north);
	\draw ([xshift=-\hsep/2]A1.south)--
		([xshift=-\hsep/2]A1.north) to[out=90,in=90] 
		(A1)  to[out=-90,in=-90]
		([xshift=\hsep/2]A1.south)--([xshift=\hsep/2]A1.north);
	\draw	([xshift=\hsep/2]A1.north) to[out=90,in=90]  coordinate[pos=1/2] (m) 
		 ([xshift=-\hsep]A1.north);
	\draw ([xshift=-\hsep]A1.south) to[out=-90,in=-90] coordinate[pos=1/2] (m*) 
		([xshift=-\hsep/2]A1.south);
	\draw (m)--++(0,\hsep/2);
	\draw (m*)--++(0,-\hsep/2);
\end{sd}
=
\begin{sd}
	\node[circle,draw] (A1) {$A^\dagger$};
	\draw ([xshift=\hsep/2]A1.south)--([xshift=\hsep/2]A1.north);
	\draw	([xshift=\hsep/2]A1.north) to[out=90,in=90]  coordinate[pos=1/2] (m) 
		 (A1.north);
	\draw (A1.south) to[out=-90,in=-90] coordinate[pos=1/2] (m*) 
		([xshift=\hsep/2]A1.south);
	\draw (m)to[out=90,in=90]++(-\hsep*3/4,0)--++(0,-\hsep);
	\draw (m*)to[out=-90,in=-90]++(\hsep*3/4,0)--++(0,\hsep);
\end{sd}
\overset{(\textrm{irreflexive})}{=} 0,
\end{align*}
where the third equality follows from topological calculation using the coassociativity, associativity, snake equation (\ref{snakeeq}) and Frobenius equation (\ref{Frobeniuseq}).
Therefore $A_\mathrm{ref}=A_\mathrm{irref}+\id_B$ is a reflexive quantum graph. 
Similarly given reflexive quantum graph $A_\mathrm{ref}$, it follows that $A_\mathrm{irref}=A_\mathrm{ref}-\id_B$ is an irreflexive quantum graph. 
The equality of their spectra follows from
\[
\lambda \id_B - A_\mathrm{irref} = (\lambda+1) \id_B - A_\mathrm{ref} 
\quad \forall \lambda \in \C.
\]
\end{proof}

\begin{prop}\label{prop:reflexcomple}
Let $(B,\psi,A)$ be a real reflexive quantum graph. Then
\[
A^c\coloneqq \id_B + \delta^2 \psi(\cdot)1 -A
\]
is also a real reflexive quantum graph on $(B,\psi)$, the so-called reflexive complement of $A$.
\end{prop}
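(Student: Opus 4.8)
The plan is to verify, one at a time, the three properties in the definition of a real reflexive quantum graph for $A^c = \id_B + \delta^2\psi(\cdot)1 - A$. Throughout I write $J := \delta^2\psi(\cdot)1$ for the complete graph and abbreviate $m(f\otimes g)m^\dagger = \delta^2(f\bullet g)$, so that Schur idempotence (\ref{Schuridemp}) reads $A\bullet A = A$ and reflexivity reads $A\bullet\id_B = \id_B$. Reality is the easy property: $\id_B$ is real by inspection, $J$ is real (it is the complete graph of the Example, and $\psi(b^*)=\overline{\psi(b)}$), and $A$ is real by hypothesis; since reality is preserved under real-linear combinations, $A^c$ is real.

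Before the two remaining checks I would record the elementary Schur-product identities the computation needs. The counit axioms $(\psi\otimes\id)m^\dagger=\id=(\id\otimes\psi)m^\dagger$ show that $J$ is a two-sided Schur unit, $f\bullet J = f = J\bullet f$ for all $f$; specialty (\ref{special}) gives $\id_B\bullet\id_B=\id_B$; the complete graph is itself a quantum graph, so $J\bullet J = J$; and $A\bullet A = A$ is the Schur idempotence of $A$. Finally, taking the Hilbert adjoint of the reflexivity equation $m(A\otimes\id_B)m^\dagger=\delta^2\id_B$ and using $(m^\dagger)^\dagger=m$ together with $(\delta^2\id_B)^\dagger=\delta^2\id_B$ yields $m(A^\dagger\otimes\id_B)m^\dagger=\delta^2\id_B$ for free.

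Reflexivity of $A^c$ then follows by expanding $m(A^c\otimes\id_B)m^\dagger$ into its three summands and substituting $\id_B\bullet\id_B=\id_B$, $J\bullet\id_B=\id_B$, and $A\bullet\id_B=\id_B$, giving $\delta^2\id_B+\delta^2\id_B-\delta^2\id_B=\delta^2\id_B$. For Schur idempotence I would expand $m(A^c\otimes A^c)m^\dagger$ bilinearly into nine terms and evaluate each using the identities above. Collecting them leaves $2\delta^2\id_B+\delta^2 J-\delta^2 A-m(\id_B\otimes A)m^\dagger$, which coincides with the target $\delta^2 A^c=\delta^2\id_B+\delta^2 J-\delta^2 A$ exactly when $m(\id_B\otimes A)m^\dagger=\delta^2\id_B$, i.e. when the \emph{left} reflexivity $\id_B\bullet A=\id_B$ holds.

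Establishing this left reflexivity is the only nontrivial step, and it is precisely where reality is used. Imitating the topological argument inside the proof of Proposition \ref{prop:ref-irref}, where the very same self-loop diagram was treated for irreflexive graphs, I would apply Lemma \ref{lem:*-pres} to bend the legs of $A$ and then rearrange using the snake equation (\ref{snakeeq}), the Frobenius equation (\ref{Frobeniuseq}), associativity and coassociativity to transform $m(\id_B\otimes A)m^\dagger$ into $m(A^\dagger\otimes\id_B)m^\dagger$. By the adjoint of reflexivity recorded above, the latter equals $\delta^2\id_B$, which closes the argument. The main obstacle is thus purely diagrammatic: carrying out the cup-and-cap manipulation that converts the left self-loop of $A$ into the right self-loop of $A^\dagger$ while respecting the balanced (non-tracial) symmetry (\ref{balancedsym}), so that no deformation through a cusp is ever invoked.
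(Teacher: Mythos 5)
Your proposal is correct and takes essentially the same route as the paper: reality and reflexivity of $A^c$ follow by linearity, and Schur idempotence follows by expanding the Schur square, with $\delta^2\psi(\cdot)1$ acting as the two-sided Schur unit so that everything reduces to the mixed term $m(\id_B\otimes A)m^\dagger$. The only difference is packaging: you isolate this as ``left reflexivity'' and reprove it by the reality-plus-topology argument inside the proof of Proposition~\ref{prop:ref-irref} (where the endpoint of that manipulation is a bent copy of $m(A^\dagger\otimes\id_B)m^\dagger$, which still evaluates to $\delta^2\id_B$ since bending the identity is the identity by the snake equation), whereas the paper groups $A^c=(\id_B-A)+\delta^2\psi(\cdot)1$ and invokes Proposition~\ref{prop:ref-irref} itself, namely Schur idempotence of the irreflexive part $A-\id_B$, which encodes exactly the same identity.
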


\begin{proof}
Since $\id_B$, $\delta^2 \psi(\cdot)1$, and $A$ are real reflexive quantum graphs  on $(B,\psi)$, linearity shows that $A^c$ is also real and reflexive.
We have by distributing the unit and the counit that
\begin{align*}
\begin{sd}
	\node[draw,circle] (A1) {$A^c$};
	\node[draw,circle,right=\hsep/4 of A1)] (A2) {$A^c$};
	\draw (A1) to[out=90,in=90] coordinate[pos=1/2] (m) (A2);
	\draw (A1) to[out=-90,in=-90] coordinate[pos=1/2] (m*) (A2);
	\draw (m)--++(0,\hsep/3);
	\draw (m*)--++(0,-\hsep/3);
\end{sd}
&=
\begin{sd}
	\node[draw] (A1) {$\id-A$};
	\node[draw,right=\hsep/8 of A1)] (A2) {$\id-A$};
	\draw (A1) to[out=90,in=90] coordinate[pos=1/2] (m) (A2);
	\draw (A1) to[out=-90,in=-90] coordinate[pos=1/2] (m*) (A2);
	\draw (m)--++(0,\hsep/3);
	\draw (m*)--++(0,-\hsep/3);
\end{sd}
+
2\delta^2
\begin{sd}
	\node[draw] (A1) {$\id-A$};
	\draw (A1)--++(0,\hsep*2/3);
	\draw (A1)--++(0,-\hsep*2/3);
\end{sd}
+
\delta^4
\begin{sd}
	\node (A1) {$\phantom{A}$};
	\draw (A1.north) arc(90:-270:\unode)--++(0,\hsep/3);
	\draw (A1.south) arc(-90:270:\unode)--++(0,-\hsep/3);
\end{sd}.
\end{align*}
Since $A-\id_B$ is an irreflexive real quantum graph by Proposition \ref{prop:ref-irref}, we obtain 
\begin{align*}
&=
\delta^2
\begin{sd}
	\node[draw] (A1) {$\id-A$};
	\draw (A1)--++(0,\hsep/2);
	\draw (A1)--++(0,-\hsep/2);
\end{sd}
+
\delta^4
\begin{sd}
	\node (A1) {$\phantom{A}$};
	\draw (A1.north) arc(90:-270:\unode)--++(0,\hsep/4);
	\draw (A1.south) arc(-90:270:\unode)--++(0,-\hsep/4);
\end{sd}
=
\delta^2
\begin{sd}
	\node[draw,circle] (A1) {$A^c$};
	\draw (A1)--++(0,\hsep/2);
	\draw (A1)--++(0,-\hsep/2);
\end{sd}.
\end{align*}
\end{proof}

\subsection{Quantum isomorphisms}

\begin{dfn}[{Musto, Reutter, Verdon \cite[Definition 3.11, 4.3]{Musto2018compositional}}]\label{dfn:Qfcn}
A \emph{quantum function} $(H,P) : (B',\psi') \to (B,\psi)$ between quantum sets $(B,\psi)$ and $(B',\psi')$ is a pair 
$(H,P)$ of a finite dimensional Hilbert space $H$ and a linear operator 
$P: H \otimes B \to B' \otimes H$ denoted in string diagrams by
\[
P=\begin{sd}
\node (B) {$B$};
\node[right=\hsep of B] (H) {$H$};
\node[above=\hsep of B] (Ha) {$H$};
\node[right=\hsep of Ha] (B') {$B'$};
\draw (B) to[out=90,in=-90]node[circle,draw,fill=white,midway](P) {$P$} (B') ;
\draw[->-] (H) to[out=90,in=-20] (P);
\draw[->-] (P) to[out=160,in=-90] (Ha);
\end{sd}
\]
satisfying 
\begin{align}
\begin{sd}
\coordinate (B) ;
\coordinate[right=\hsep of B] (H) ;
\coordinate[above=\hsep of B] (Ha) ;
\coordinate[right=\hsep of Ha] (B') ;
\node[circle,draw,fill=white](P) at ($(B)!0.5!(B')$) {$P$};
\draw (B') to (P) ;
\draw (P) --++(-1/4,-1/4) arc(45:-315:\unode) ;
\draw[->-] (H) to (P);
\draw[->-] (P) to (Ha);
\end{sd}
&=
\begin{sd}
\coordinate (B) ;
\coordinate[right=\hsep of B] (H) ;
\coordinate[above=\hsep of B] (Ha) ;
\coordinate[right=\hsep of Ha] (B') ;
\draw (B')--++(0,-1/4) arc(90:-270:\unode);
\draw[->-] (H)--(Ha); 
\end{sd}
&
\begin{sd}
\coordinate (B2) ;
\coordinate[right=\hsep of B2] (B1) ;
\coordinate[right=\hsep of B1] (H) ;
\coordinate[above=\hsep of B2] (Ha) ;
\coordinate[right=\hsep*3/2 of Ha] (B') ;
\node[circle,draw,fill=white](P1) at ($(H)!1/3!(Ha)$) {$P$};
\node[circle,draw,fill=white](P2) at ($(H)!2/3!(Ha)$) {$P$};
\draw (B2) to (P2) (B1) to (P1);
\draw (P1) to[out=45,in=45]coordinate[midway](m) (P2);
\draw (m)--(B');
\draw[->-] (H) to (P1); \draw[->-] (P1) to (P2); \draw[->-] (P2) to (Ha) ;
\end{sd}
&=
\begin{sd}
\coordinate (B2) ;
\coordinate[right=\hsep of B2] (B1) ;
\coordinate[right=\hsep of B1] (H) ;
\coordinate[above=\hsep of B2] (Ha) ;
\coordinate[right=\hsep*3/2 of Ha] (B') ;
\node[circle,draw,fill=white](P1) at ($(H)!1/2!(Ha)$) {$P$};
\draw (B1) to[out=90,in=45]coordinate[midway](m) (B2);
\draw (m)--(P1) to (B');
\draw[->-] (H) to (P1); \draw[->-] (P1) to (Ha) ;
\end{sd}
&
\begin{sd}
\coordinate (B) ;
\coordinate[right=\hsep of B] (H) ;
\coordinate[above=\hsep of B] (Ha) ;
\coordinate[right=\hsep of Ha] (B') ;
\node[circle,draw,fill=white](P) at ($(B)!0.5!(B')$) {$P^\dagger$};
\draw (B) to[out=90,in=135] (P) to[out=-45,in=-90] (B');
\draw[->-] ($(B)!0.2!(H)$) to (P);
\draw[->-] (P) to ($(B')!0.2!(Ha)$);
\end{sd}
&=
\begin{sd}
\coordinate (B) ;
\coordinate[right=\hsep of B] (H) ;
\coordinate[above=\hsep of B] (Ha) ;
\coordinate[right=\hsep of Ha] (B') ;
\node[circle,draw,fill=white](P) at ($(B)!0.5!(B')$) {$P$};
\draw (B) to (P) to (B');
\draw[->-] (H) to (P);
\draw[->-] (P) to (Ha);
\end{sd},					\label{qfcn}
\end{align}
which respectively means that $P$ preserves the unit, multiplication, and involution.
A quantum function $(H,P)$ is called a \emph{quantum bijection} if it also satisfies
\begin{align}
\begin{sd}
\coordinate (B) ;
\coordinate[right=\hsep of B] (H) ;
\coordinate[above=\hsep of B] (Ha) ;
\coordinate[right=\hsep of Ha] (B') ;
\node[circle,draw,fill=white](P) at ($(B)!0.5!(B')$) {$P$};
\draw (B) to (P) ;
\draw (P) --++(1/4,1/4) arc(-135:225:\unode) ;
\draw[->-] (H) to (P);
\draw[->-] (P) to (Ha);
\end{sd}
&=
\begin{sd}
\coordinate (B) ;
\coordinate[right=\hsep of B] (H) ;
\coordinate[above=\hsep of B] (Ha) ;
\coordinate[right=\hsep of Ha] (B') ;
\draw (B)--++(0,1/4) arc(-90:270:\unode);
\draw[->-] (H)--(Ha); 
\end{sd}
&
\begin{sd}
\coordinate (B0) ;
\coordinate[right=\hsep*1/2 of B0] (B1) ;
\coordinate[right=\hsep*3/2 of B1] (H) ;
\coordinate[above=\hsep of B0] (Ha) ;
\coordinate[right=\hsep of Ha] (B'2) ;
\coordinate[right=\hsep of B'2] (B'1) ;
\node[circle,draw,fill=white](P1) at ($(H)!1/3!(Ha)$) {$P$};
\node[circle,draw,fill=white](P2) at ($(H)!2/3!(Ha)$) {$P$};
\draw (B'2) to (P2) (B'1) to (P1);
\draw (P1) to[out=-135,in=-135]coordinate[midway](m) (P2);
\draw (m)--(B1);
\draw[->-] (H) to (P1); \draw[->-] (P1) to (P2); \draw[->-] (P2) to (Ha) ;
\end{sd}
&=
\begin{sd}
\coordinate (B0) ;
\coordinate[right=\hsep*1/2 of B0] (B1) ;
\coordinate[right=\hsep*3/2 of B1] (H) ;
\coordinate[above=\hsep of B0] (Ha) ;
\coordinate[right=\hsep of Ha] (B'2) ;
\coordinate[right=\hsep of B'2] (B'1) ;
\node[circle,draw,fill=white](P1) at ($(H)!1/2!(Ha)$) {$P$};
\draw (B'2) to[out=-90,in=-135]coordinate[midway](m) (B'1);
\draw (m)--(P1) to (B1);
\draw[->-] (H) to (P1); \draw[->-] (P1) to (Ha) ;
\end{sd},					\label{qbij}
\end{align}
which respectively means that $P$ preserves the counit and comultiplication.
If $\abs{H}=\dim H=1$, then a quantum function (resp. quantum bijection) $(H,P)$ is called a classical function (resp. classical bijection).
\end{dfn}

\begin{rmk}
In the case of $\abs{H}=1$, we may forget the oriented strings of $H$.
Then (\ref{qfcn}) exactly says that
\begin{align*}
P(1)&=1,
& P(x) P(y) &= P(xy),
& P(x^*)^*&=P(x)  \quad \forall x,y \in B,
\end{align*} 
i.e., $P:B \to B'$ is a $*$-homomorphism. Similarly (\ref{qbij}) says that $P:B \to B'$ is a cohomomorphism. This is why $(H,P)$ is called classical if $\abs{H}=1$.
\end{rmk}

Note that the quantum function $(H,P):(B',\psi')\to (B,\psi)$ and `homomorphism' $P:B\otimes H\to H\otimes B'$ have opposite direction. This is based on the Gelfand duality, where a set function $f:X \to Y$ corresponds to a unital $*$-homomorphism $\cdot \circ f : C(Y) \to C(X)$.

\begin{rmk}\label{rmk:qisom/*hom}
Alternatively we may consider
\[
\tilde{P}=
\begin{sd}
\node[draw] (P) {$\tilde{P}$};
\draw (P.north)--++(0,0.5) (P.south)--++(0,-0.5);
\draw[->-] ([xshift=-0.2cm]P.north)--++(0,0.5) ;
\draw[-<-] ([xshift=0.2cm]P.north)--++(0,0.5);
\end{sd}
\coloneqq
\begin{sd}
\coordinate (B) ;
\coordinate[right=\hsep of B] (H) ;
\coordinate[above=\hsep*3/2 of B] (Ha) ;
\coordinate[right=\hsep of Ha] (B') ;
\node[circle,draw,fill=white](P) at ($(B)!0.5!(B')$) {$P$};
\draw (B) to (P) to (B');
\draw[->-] ([xshift=\hsep/3]B') to[out=-90,in=-45] (P);
\draw[->-] (P) to (Ha);
\end{sd}
: B \to H \otimes B' \otimes H^* \cong B' \otimes B(H)
\]
(cf. \cite[proof of Theorem 3.28]{Musto2018compositional}). Then $(H,P)$ is a quantum function if and only if 
$\tilde{P}: B \to  B' \otimes B(H)$ is a $*$-homomorphism.
Note that $H \otimes B' \otimes H^* \cong B' \otimes B(H)$ is equipped with the following operators by Proposition \ref{prop:HH*=B(H)}: 
\[
1'\otimes \id_H =\begin{sd}
	\draw (0,0)--++(0,1/4); \draw[fill=white] (0,0) circle (\unode) ;
	\draw[-<-]  (-\hsep/4,1/4)--(-\hsep/4,0) arc(-180:0:\hsep/4) 
		--(\hsep/4,1/4);
\end{sd}, \quad
m=\begin{sd}
	\coordinate (H1) at (-\hsep/2,0);  
	\coordinate (H2) at (\hsep/2,0)  ;
	\coordinate (H3) at (0,\hsep);
	\draw[->-] ([xshift=-0.2cm]H1) to[out=90,in=-90] ([xshift=-0.2cm]H3) ;
	\draw[->-] ([xshift=-0.2cm]H2) to[out=90,in=90] ([xshift=0.2cm]H1) ;
	\draw (H2) arc(0:180:\hsep/2) coordinate[midway] (m)  ;
	\draw (m)--(H3);
	\draw[-<-] ([xshift=0.2cm]H2) to[out=90,in=-90] ([xshift=0.2cm]H3) ;
\end{sd}, \quad
\psi' \otimes \frac{\Tr}{\abs{H}}=\frac{1}{\abs{H}}\begin{sd}	
	\draw (0,0)--++(0,-1/4); \draw[fill=white] (0,0) circle (\unode) ;
	\draw[->-]  (-\hsep/4,-1/4)--(-\hsep/4,0) arc(180:0:\hsep/4) 
		--(\hsep/4,-1/4);
\end{sd}, \quad
m^\dagger=\abs{H}\begin{sd}  
	\coordinate (H1) at (-\hsep/2,0);  
	\coordinate (H2) at (\hsep/2,0)  ;
	\coordinate (H3) at (0,-\hsep);
	\draw[-<-] ([xshift=-0.2cm]H1) to[out=-90,in=90] ([xshift=-0.2cm]H3) ;
	\draw[-<-] ([xshift=-0.2cm]H2) to[out=-90,in=-90] ([xshift=0.2cm]H1) ;
	\draw (H2) arc(0:-180:\hsep/2) coordinate[midway] (m)  (H1) ;
	\draw (m)--(H3);
	\draw[->-] ([xshift=0.2cm]H2) to[out=-90,in=90] ([xshift=0.2cm]H3) ;
\end{sd}.
\]
Thus indeed (\ref{qfcn}) formulates that $\tilde{P}$ is a $*$-homomorphism.
Although string diagrams like 
$\begin{sd}
	\draw[->-]  (0,0) arc(180:0:\hsep/4) ;
\end{sd}$
 do not work well for infinite dimensional $H$, 
the formulation in terms of $\tilde{P}$ is valid.
The formulation of quantum isomorphisms by Brannan et al. 
\cite[section 4]{Brannan2019bigalois} is derived from this viewpoint.
\end{rmk}

\begin{rmk}
By the snake equations (\ref{snakeeq}), 
the $*$-preserving condition in (\ref{qfcn}) has an equivalent formulation:
\begin{align}
\begin{sd}
\coordinate (B) ;
\coordinate[right=\hsep of B] (H) ;
\coordinate[above=\hsep of B] (Ha) ;
\coordinate[right=\hsep of Ha] (B') ;
\node[circle,draw,fill=white](P) at ($(B)!0.5!(B')$) {$P^\dagger$};
\draw (B) to[out=90,in=135] (P) to[out=-45,in=-90] (B');
\draw[->-] ($(B)!0.2!(H)$) to (P);
\draw[->-] (P) to ($(B')!0.2!(Ha)$);
\end{sd}
=
\begin{sd}
\coordinate (B) ;
\coordinate[right=\hsep of B] (H) ;
\coordinate[above=\hsep of B] (Ha) ;
\coordinate[right=\hsep of Ha] (B') ;
\node[circle,draw,fill=white](P) at ($(B)!0.5!(B')$) {$P$};
\draw (B) to (P) to (B');
\draw[->-] (H) to (P);
\draw[->-] (P) to (Ha);
\end{sd}
\iff
\begin{sd}
\coordinate (B) ;
\coordinate[left=\hsep of B] (H) ;
\coordinate[above=\hsep of B] (Ha) ;
\coordinate[left=\hsep of Ha] (B') ;
\node[circle,draw,fill=white](P) at ($(B)!0.5!(B')$) {$P^\dagger$};
\draw (B) to (P) to (B');
\draw[->-] (H) to (P);
\draw[->-] (P) to (Ha);
\end{sd}
=
\begin{sd}
\coordinate (B) ;
\coordinate[left=\hsep of B] (H) ;
\coordinate[above=\hsep of B] (Ha) ;
\coordinate[left=\hsep of Ha] (B') ;
\node[circle,draw,fill=white](P) at ($(B)!0.5!(B')$) {$P$};
\draw (B) to[out=90,in=45] (P) to[out=-135,in=-90] (B');
\draw[->-] ($(B)!0.2!(H)$) to (P);
\draw[->-] (P) to ($(B')!0.2!(Ha)$);
\end{sd}.				\label{*presqfcn}
\end{align}
\end{rmk}

\begin{dfn}[{Musto, Reutter, Verdon \cite[Definition 3.18]{Musto2018compositional}}]
Let $(H,P),(H',P') : (B',\psi') \to (B,\psi)$ be quantum functions. 
An \emph{intertwiner} $f:(H,P)\to(H',P')$ is an operator $f: H\to H'$ satisfying
\[
\begin{sd}
\coordinate (B) ;
\coordinate[right=\hsep*3/2 of B] (H) ;
\coordinate[above=\hsep*3/2 of B] (Ha) ;
\coordinate[right=\hsep*3/2 of Ha] (B') ;
\node[circle,draw,fill=white](P) at ($(B)!0.5!(B')$) {$P$};
\draw (B) to  (P); 
\draw (P) to (B');
\draw[->-] (H) to  (P);
\draw[->-] (P) to coordinate[midway] (G) (Ha);
\node[circle,draw,fill=white] at (G) {$f$};
\end{sd}
=
\begin{sd}
\coordinate (B) ;
\coordinate[right=\hsep*3/2 of B] (H) ;
\coordinate[above=\hsep*3/2 of B] (Ha) ;
\coordinate[right=\hsep*3/2 of Ha] (B') ;
\node[circle,draw,fill=white](P) at ($(B)!0.5!(B')$) {$P'$};
\draw (B) to (P); 
\draw (P) to (B');
\draw[->-] (H) to coordinate[midway] (G)  (P);
\draw[->-] (P) to  (Ha);
\node[circle,draw,fill=white] at (G) {$f$};
\end{sd}.
\]
The category $\QSet$ of quantum sets is defined as a $2$-category that
consists of
\begin{itemize}
\item \textbf{Objects:} quantum sets $(B,\psi)$;
\item \textbf{1-morphisms:} quantum functions $(H,P) : (B',\psi') \to (B,\psi)$;
\item \textbf{2-morphisms:} intertwiners $f:(H,P)\to(H',P')$.
\end{itemize}
Given quantum sets $(B,\psi), (B',\psi')$, we define the category $\QBij((B',\psi'), (B,\psi))$ as a category consisting of
\begin{itemize}
\item \textbf{Objects:} quantum bijections $(H,P) : (B',\psi') \to (B,\psi)$;
\item \textbf{Morphisms:} intertwiners $f:(H,P)\to(H',P')$.
\end{itemize}

\end{dfn}

\begin{lem}[{Tracial case by Musto, Reutter, Verdon \cite[Theorem 4.8]{Musto2018compositional}}]\label{lem:qbijunitary}
For a quantum function $(H,P) : (B',\psi') \to (B,\psi)$, TFAE: 
\begin{description}
\item[$(1)$]
$(H,P)$ is a quantum bijection;
\item[$(2)$]
$P$ is a unitary operator.
\end{description}
\end{lem}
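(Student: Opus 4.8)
The plan is to prove the two implications separately, the common engine being an \emph{adjoint duality}: the counit- and comultiplication-preservation diagrams of (\ref{qbij}) are exactly the vertical mirror images of the unit- and multiplication-preservation diagrams of (\ref{qfcn}). Since vertical mirroring of a string diagram is the Hilbert adjoint and replaces every node $P$ by $P^\dagger$, every $m$ by $m^\dagger$, and every unit $1$ by the counit $\psi$, this says precisely that \emph{$P$ preserves the unit (resp. multiplication) if and only if $P^\dagger$ preserves the counit (resp. comultiplication)}. First I would record this by taking $\dagger$ of the first two equalities in (\ref{qfcn}): writing $P_2\coloneqq(\id_{B'}\otimes P)(P\otimes\id_B)$ for the stacked product, the adjoints read $(\id_H\otimes\psi)P^\dagger=\psi'\otimes\id_H$ and $P_2^\dagger(m_{B'}^\dagger\otimes\id_H)=(\id_H\otimes m_B^\dagger)P^\dagger$.

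For $(2)\Rightarrow(1)$, assume $P$ is unitary, so $P^\dagger=P^{-1}$ and $P_2^\dagger=P_2^{-1}$. As $(H,P)$ is a quantum function it preserves the unit and multiplication, so the two adjoint identities above hold. Post-composing the first with $P$ and cancelling $P^{-1}P=\id$ yields $(\psi'\otimes\id_H)P=\id_H\otimes\psi$, which is counit preservation; pre-composing the second by $P_2$ and post-composing by $P$, then cancelling $P_2^{-1}P_2=\id$ and $P^{-1}P=\id$, yields $(m_{B'}^\dagger\otimes\id_H)P=P_2(\id_H\otimes m_B^\dagger)$, which is comultiplication preservation. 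Hence $(H,P)$ satisfies (\ref{qbij}) and is a quantum bijection.

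For $(1)\Rightarrow(2)$ — the substantive direction — assume $(H,P)$ is a quantum bijection; I must show $P^\dagger P=\id_{H\otimes B}$ and $PP^\dagger=\id_{B'\otimes H}$. The idea is to eliminate $P^\dagger$ using the $*$-preservation relation (\ref{*presqfcn}), which expresses $P^\dagger$ as $P$ with its $B$- and $B'$-legs bent around by the Frobenius cups and caps. Substituting this into $P^\dagger P$ turns it into a single diagram built from two copies of $P$ whose $B'$-legs are joined by a cap and whose $B$-legs by a cup; the two copies merge into one via multiplication preservation (for $PP^\dagger$ one uses comultiplication preservation instead), after which the leftover Frobenius bubble is removed by specialness (\ref{special}), $mm^\dagger=\delta^2\id$, and the dangling legs collapse by unit- and counit-preservation, leaving $\id$. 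As supporting control, the moment identities $X(\id_H\otimes 1_B)=\id_H\otimes 1_B$ and $(\id_H\otimes\psi)X=\id_H\otimes\psi$ for $X\coloneqq P^\dagger P$ already follow by combining the adjoint duality with (\ref{qbij}).

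The main obstacle is precisely this last diagram chase in the nontracial setting. Because topological deformation through a cusp is forbidden here — the balanced-symmetry relation (\ref{balancedsym}) replaces free isotopy — every bend produced by the $*$-preservation move must be routed through (\ref{balancedsym}) and Lemma \ref{balanceloop}, so the reduction acquires $Q$-conjugations $Q(\cdot)Q^{-1}$ that have to cancel in pairs; simultaneously the $H$-loops contribute the normalization $\abs{H}$ of Proposition \ref{prop:HH*=B(H)} and specialness contributes $\delta^2$, and one must check that these scalars balance to exactly $1$. Verifying that the $Q$-twists and the $\abs{H},\delta^2$ factors cancel — which is automatic in the tracial case of \cite[Theorem 4.8]{Musto2018compositional} but not here — is the crux of the argument.
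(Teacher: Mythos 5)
Your $(2)\Rightarrow(1)$ argument is correct, and it is essentially the paper's own argument in algebraic rather than diagrammatic dress: the paper also dualizes the unit/multiplication conditions and cancels using unitarity (``postcomposing $P^\dagger$ and taking the adjoint''). The genuine problem is in $(1)\Rightarrow(2)$, which you rightly call the substantive direction but never complete --- and the completion you sketch would fail. After substituting (\ref{*presqfcn}) into $P^\dagger P$ and merging the two copies of $P$ via multiplication preservation composed with $\psi'$, the leftover $B$-wiring is $(m_B\otimes\id_B)(\id_B\otimes m_B^\dagger 1_B)$: a multiplication and a comultiplication sitting on \emph{different} pairs of legs. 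This is the left-hand side of the Frobenius equation (\ref{Frobeniuseq}), so it equals $m_B^\dagger m_B(\id_B\otimes 1_B)=m_B^\dagger$, with no scalar whatsoever. It is \emph{not} the specialness bubble $mm^\dagger$; if you contract it by (\ref{special}) as you propose, you introduce a factor $\delta^2$ that nothing cancels, and you would ``prove'' $P^\dagger P=\delta^2\,\id$, which is false. The correct finish is: counit preservation from (\ref{qbij}) replaces $\psi' P$ by $\id_H\otimes\psi$, and the counit axiom $(\psi\otimes\id)m^\dagger=\id$ gives $P^\dagger P=\id$; dually, $PP^\dagger=\id$ uses comultiplication preservation, then unit preservation, then the snake equation (\ref{snakeeq}).

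The other ``crux'' you flag --- $Q$-conjugations from (\ref{balancedsym}) and Lemma \ref{balanceloop}, and $\abs{H}$ normalizations from Proposition \ref{prop:HH*=B(H)}, all needing to balance --- is a phantom. No wire is ever bent beyond the bends already present in (\ref{*presqfcn}), which is not a move you perform but an equivalent form of the $*$-preservation hypothesis on the quantum function; the $H$ strings pass through straight and are never closed into loops, so $\abs{H}$ never enters; and every subsequent step is one of the scalar-free identities (Frobenius, snake, unit/counit axioms, and the four preservation conditions), all of which hold verbatim in the nontracial setting. That is exactly why the paper's diagram chase needs no case distinction between tracial and nontracial. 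Your auxiliary ``moment identities'' for $X=P^\dagger P$ are correct but of course do not imply $X=\id$. So as written, the forward direction rests on a reduction that is both misidentified (specialness where the Frobenius equation is needed) and left unverified; that step is a real gap, not a routine check.
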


\begin{proof}
$(1)\implies(2)$
By the involution and multiplication preserving conditions in (\ref{qfcn})
and the counit preserving condition in (\ref{qbij}), we have
\begin{align*}
P^\dagger P 
=
\begin{sd}
\coordinate (B1) ;
\coordinate[right=\hsep of B1] (H1) ;
\coordinate[above=\hsep of B1] (H2) ;
\coordinate[right=\hsep of H2] (B2) ;
\coordinate[above=\hsep of H2] (B3);
\coordinate[right=\hsep of B3] (H3) ;
\node[circle,draw,fill=white](P1) at ($(B1)!0.5!(B2)$) {$P$};
\node[circle,draw,fill=white](P2) at ($(B3)!0.5!(B2)$) {$P^\dagger$};
\draw (B1) to (P1) ;
\draw (P1) to[out=45,in=-45] (P2) ;
\draw (P2) to (B3) ;
\draw[->-] (H1) to (P1);
\draw[->-] (P1) to[out=135,in=-135] (P2) ;
\draw[->-] (P2) to (H3) ;
\end{sd}
\overset{\textrm{(\ref{*presqfcn})}}{=}
\begin{sd}
\coordinate (B1) ;
\coordinate[right=\hsep of B1] (H1) ;
\coordinate[above=\hsep of B1] (H2) ;
\coordinate[right=\hsep of H2] (B2) ;
\coordinate[above=\hsep of H2] (B3);
\coordinate[right=\hsep of B3] (H3) ;
\node[circle,draw,fill=white](P1) at ($(H1)!1/3!(B3)$) {$P$};
\node[circle,draw,fill=white](P2) at ($(H1)!2/3!(B3)$) {$P$};
\draw (B1) to (P1) ;
\draw (P1) to[out=45,in=45] (P2) ;
\draw (P2) to[out=-135,in=-90] ([xshift=-0.3cm]B3) ;
\draw[->-] (H1) to (P1);
\draw[->-] (P1) to (P2) ;
\draw[->-] (P2) to (B3) ;
\end{sd}
\overset{\textrm{(\ref{qfcn})}}{=}
\begin{sd}
\coordinate (B1) ;
\coordinate[right=\hsep of B1] (H1) ;
\coordinate[above=\hsep*2 of B1] (H2);
\coordinate[right=\hsep of H2] (B2) ;
\node[circle,draw,fill=white](P1) at ($(H1)!1/2!(H2)$) {$P$};
\draw (P1)--++(-0.3,-0.3) coordinate(m) ;
\draw (P1)--++(1/4,1/4) arc(-135:225:\unode) ;
\draw (B1) to[out=45,in=0] (m) to[out=180,in=0]++(-1/6,-1/10)
	to[out=180,in=-90] ([xshift=-0.3cm,yshift=-\hsep]H2)-- ([xshift=-0.3cm]H2);
\draw[->-] (H1) to (P1);
\draw[->-] (P1) to (H2) ;
\end{sd}
\overset{\textrm{(\ref{qbij})}}{=}
\begin{sd}
\coordinate (B1) ;
\coordinate[right=\hsep of B1] (H1) ;
\coordinate[above=\hsep*2 of B1] (H2);
\coordinate[right=\hsep of H2] (B2) ;
\coordinate(P1) at ($(H1)!1/2!(H2)$);
\draw (P1)++(-0.3,-0.3) coordinate(m) ;
\draw (B1) to[out=45,in=0] (m) to[out=180,in=0]++(-1/6,-1/10)
	to[out=180,in=-90] ([xshift=-0.3cm,yshift=-\hsep]H2)-- ([xshift=-0.3cm]H2);
\draw[->-] (H1) to (H2) ;
\end{sd}
=
\begin{sd}
\draw (0,0)--++(0,\hsep);
\draw[->-] (\hsep/2,0)--++(0,\hsep);
\end{sd}
=\id_{B\otimes H}.
\end{align*}
Similarly by the involution and unit preserving conditions in (\ref{qfcn})
and the comultiplication preserving condition in (\ref{qbij}), we have
\begin{align*}
P P^\dagger 
=
\begin{sd}
\coordinate (B1) ;
\coordinate[right=\hsep of B1] (H1) ;
\coordinate[above=\hsep of B1] (H2) ;
\coordinate[right=\hsep of H2] (B2) ;
\coordinate[above=\hsep of H2] (B3);
\coordinate[right=\hsep of B3] (H3) ;
\node[circle,draw,fill=white](P1) at ($(B1)!0.5!(B2)$) {$P^\dagger$};
\node[circle,draw,fill=white](P2) at ($(B3)!0.5!(B2)$) {$P$};
\draw[->-] (B1) to (P1) ;
\draw[->-] (P1) to[out=45,in=-45] (P2) ;
\draw[->-] (P2) to (B3) ;
\draw (H1) to (P1);
\draw (P1) to[out=135,in=-135] (P2) ;
\draw (P2) to (H3) ;
\end{sd}
\overset{\textrm{(\ref{*presqfcn})}}{=}
\begin{sd}
\coordinate (B1) ;
\coordinate[right=\hsep of B1] (H1) ;
\coordinate[above=\hsep of B1] (H2) ;
\coordinate[right=\hsep of H2] (B2) ;
\coordinate[above=\hsep of H2] (B3);
\coordinate[right=\hsep of B3] (H3) ;
\node[circle,draw,fill=white](P1) at ($(H1)!1/3!(B3)$) {$P$};
\node[circle,draw,fill=white](P2) at ($(H1)!2/3!(B3)$) {$P$};
\draw (H3) to (P2) ;
\draw (P2) to[out=-135,in=-135] (P1) ;
\draw (P1) to[out=45,in=90] ([xshift=0.3cm]H1) ;
\draw[->-] (H1) to (P1);
\draw[->-] (P1) to (P2) ;
\draw[->-] (P2) to (B3) ;
\end{sd}
\overset{\textrm{(\ref{qbij})}}{=}
\begin{sd}
\coordinate (B1) ;
\coordinate[right=\hsep of B1] (H1) ;
\coordinate[above=\hsep*2 of B1] (H2);
\coordinate[right=\hsep of H2] (B2) ;
\node[circle,draw,fill=white](P1) at ($(H1)!1/2!(H2)$) {$P$};
\draw (P1)--++(0.3,0.3) coordinate(m) ;
\draw (P1)--++(-1/4,-1/4) arc(45:-315:\unode) ;
\draw (B2) to[out=-135,in=180] (m) to[out=0,in=180]++(1/6,1/10)
	to[out=0,in=90] ([xshift=0.3cm,yshift=\hsep]H1)-- ([xshift=0.3cm]H1);
\draw[->-] (H1) to (P1);
\draw[->-] (P1) to (H2) ;
\end{sd}
\overset{\textrm{(\ref{qfcn})}}{=}
\begin{sd}
\coordinate (B1) ;
\coordinate[right=\hsep of B1] (H1) ;
\coordinate[above=\hsep*2 of B1] (H2);
\coordinate[right=\hsep of H2] (B2) ;
\coordinate(P1) at ($(H1)!1/2!(H2)$) ;
\draw (P1)++(0.3,0.3) coordinate(m) ;
\draw (B2) to[out=-135,in=180] (m) to[out=0,in=180]++(1/6,1/10)
	to[out=0,in=90] ([xshift=0.3cm,yshift=\hsep]H1)-- ([xshift=0.3cm]H1);
\draw[->-] (H1) to (H2) ;
\end{sd}
=
\begin{sd}
\draw (0,0)--++(0,\hsep);
\draw[->-] (-\hsep/2,0)--++(0,\hsep);
\end{sd}
=\id_{H\otimes B'}.
\end{align*}
Therefore $P$ is unitary.

\noindent $(2)\implies(1)$
Since $P$ is a unitary quantum function,  we have
\begin{align*}
\begin{sd}
\coordinate (B1) ;
\coordinate[left=\hsep of B1] (H1) ;
\coordinate[above=\hsep of B1] (H2) ;
\coordinate[left=\hsep of H2] (B2) ;
\coordinate[above=\hsep of H2] (B3);
\coordinate[left=\hsep of B3] (H3) ;
\coordinate (P1) at ($(B1)!0.5!(B2)$) ;
\node[circle,draw,fill=white](P2) at ($(B3)!0.5!(B2)$) {$P$};
\draw  (P2) --++ (-1/3,-1/3) arc(45:-3155:\unode);
\draw (P2) to (B3) ;
\draw[->-] (H1) to (P1) to[out=45,in=-45] (P2) ;
\draw[->-] (P2) to (H3) ;
\end{sd}
\overset{\textrm{(\ref{qfcn})}}{=}
\begin{sd}
\draw (0,\hsep/2)arc(90:-270:\unode)--(0,\hsep);
\draw[->-] (-\hsep/2,0)--++(0,\hsep);
\end{sd}
\overset{(\textrm{unitary})}{=}
\begin{sd}
\coordinate (B1) ;
\coordinate[left=\hsep of B1] (H1) ;
\coordinate[above=\hsep of B1] (H2) ;
\coordinate[left=\hsep of H2] (B2) ;
\coordinate[above=\hsep of H2] (B3);
\coordinate[left=\hsep of B3] (H3) ;
\node[circle,draw,fill=white](P1) at ($(B1)!0.5!(B2)$) {$P^\dagger$};
\node[circle,draw,fill=white](P2) at ($(B3)!0.5!(B2)$) {$P$};
\draw (P1) --++ (1/3,-1/3) arc(135:-225:\unode);
\draw (P1) to[out=135,in=-135] (P2) ;
\draw (P2) to (B3) ;
\draw[->-] (H1) to (P1);
\draw[->-] (P1) to[out=45,in=-45] (P2) ;
\draw[->-] (P2) to (H3) ;
\end{sd}.
\end{align*}
By postcomposing $P^\dagger$ and taking the adjoint, we obtain  (\ref{qbij}):
\[
\begin{sd}
\coordinate (B) ;
\coordinate[right=\hsep of B] (H) ;
\coordinate[above=\hsep of B] (Ha) ;
\coordinate[right=\hsep of Ha] (B') ;
\draw (B)--++(0,1/4) arc(-90:270:\unode);
\draw[->-] (H)--(Ha); 
\end{sd}
=
\begin{sd}
\coordinate (B) ;
\coordinate[right=\hsep of B] (H) ;
\coordinate[above=\hsep of B] (Ha) ;
\coordinate[right=\hsep of Ha] (B') ;
\node[circle,draw,fill=white](P) at ($(B)!0.5!(B')$) {$P$};
\draw (B) to (P) ;
\draw (P) --++(1/4,1/4) arc(-135:225:\unode) ;
\draw[->-] (H) to (P);
\draw[->-] (P) to (Ha);
\end{sd}.
\] 
Next, we show the comultiplication preserving condition in (\ref{qbij}).
Considering the composition of $P$ and the adjoint of (\ref{qbij}), we have 
\[
\begin{sd}
\coordinate (B0) ;
\coordinate[right=\hsep*1/2 of B0] (B1) ;
\coordinate[right=\hsep*3/2 of B1] (H) ;
\coordinate[below=\hsep of B0] (Ha) ;
\coordinate[right=\hsep of Ha] (B'2) ;
\coordinate[right=\hsep of B'2] (B'1) ;
\node[circle,draw,fill=white](P1) at ($(H)!1/3!(Ha)$) {$P^\dagger$};
\node[circle,draw,fill=white](P2) at ($(H)!2/3!(Ha)$) {$P^\dagger$};
\coordinate[above=\hsep of B0] (Hb) ;
\coordinate[right=\hsep*3/2 of Hb] (B') ;
\node[circle,draw,fill=white](P3) at ($(H)!1/2!(Hb)$) {$P$};
\draw[->-] (P1) to[out=30,in=-30] (P3); \draw[->-] (P3) to (Hb) ;
\draw (B'2) to (P2) (B'1) to (P1) (P3) to (B');
\draw (P1) to[out=135,in=135]coordinate[midway](m) (P2);
\draw (m)to[out=135,in=-135](P3);
\draw[-<-] (P1) to (P2); \draw[-<-] (P2) to (Ha); 
\end{sd}
\overset{\textrm{(\ref{qfcn})}}{=}
\begin{sd}
\coordinate (B0) ;
\coordinate[right=\hsep*1/2 of B0] (B1) ;
\coordinate[right=\hsep*3/2 of B1] (H) ;
\coordinate[below=\hsep of B0] (Ha) ;
\coordinate[right=\hsep of Ha] (B'2) ;
\coordinate[right=\hsep of B'2] (B'1) ;
\node[circle,draw,fill=white](P1) at ($(H)!1/3!(Ha)$) {$P^\dagger$};
\node[circle,draw,fill=white](P2) at ($(H)!2/3!(Ha)$) {$P^\dagger$};
\node[circle,draw,fill=white](P3) at ($(H)!1/3!(Hb)$) {$P$};
\node[circle,draw,fill=white](P4) at ($(H)!2/3!(Hb)$) {$P$};
\draw (P2) to[out=135,in=-135] (P4) (P1) to[out=135,in=-135] (P3);
\draw (P3) to[out=45,in=45]coordinate[midway](m1) (P4);
\draw (m1)--(B');
\draw[->-] (P3) to (P4); \draw[->-] (P4) to (Hb) ;
\draw[->-] (P1) to[out=30,in=-30] (P3);
\draw (B'2) to (P2) (B'1) to (P1);
\draw[-<-] (P1) to (P2); \draw[-<-] (P2) to (Ha); 
\end{sd}
\overset{\textrm{(unitary)}}{=}
\begin{sd}
\coordinate (B1) ;
\coordinate[right=\hsep of B1] (H1) ;
\coordinate[above=\hsep of B1] (H2) ;
\coordinate[right=\hsep of H2] (B2) ;
\coordinate[above=\hsep of H2] (B3);
\coordinate[right=\hsep of B3] (H3) ;
\node[circle,draw,fill=white](P1) at ($(B1)!0.5!(B2)$) {$P^\dagger$};
\node[circle,draw,fill=white](P2) at ($(B3)!0.5!(B2)$) {$P$};
\draw[->-] (B1) to (P1) ;
\draw[->-] (P1) to[out=45,in=-45] (P2) ;
\draw[->-] (P2) to (B3) ;
\draw (H1) to (P1) (H3)--++(0,-0.2) coordinate (m);
\draw (P1) to[out=135,in=-135] (P2) ;
\draw (P2) to[out=45,in=180] (m) to[out=0,in=90] ([xshift=\hsep/2]H1) ;
\end{sd}
\overset{\textrm{(unitary)}}{=}
\begin{sd}
\draw (0,0)arc(180:0:\hsep/2) coordinate[midway](m);
\draw (m)--++(0,\hsep/2);
\draw[->-] (-\hsep/2,0)--++(0,\hsep);
\end{sd}.
\]
By postcomposing $P^\dagger$ and taking the adjoint again, we obtain (\ref{qbij}):
\[
\begin{sd}
\coordinate (B0) ;
\coordinate[right=\hsep*1/2 of B0] (B1) ;
\coordinate[right=\hsep*3/2 of B1] (H) ;
\coordinate[above=\hsep of B0] (Ha) ;
\coordinate[right=\hsep of Ha] (B'2) ;
\coordinate[right=\hsep of B'2] (B'1) ;
\node[circle,draw,fill=white](P1) at ($(H)!1/3!(Ha)$) {$P$};
\node[circle,draw,fill=white](P2) at ($(H)!2/3!(Ha)$) {$P$};
\draw (B'2) to (P2) (B'1) to (P1);
\draw (P1) to[out=-135,in=-135]coordinate[midway](m) (P2);
\draw (m)--(B1);
\draw[->-] (H) to (P1); \draw[->-] (P1) to (P2); \draw[->-] (P2) to (Ha) ;
\end{sd}
=
\begin{sd}
\coordinate (B0) ;
\coordinate[right=\hsep*1/2 of B0] (B1) ;
\coordinate[right=\hsep*3/2 of B1] (H) ;
\coordinate[above=\hsep of B0] (Ha) ;
\coordinate[right=\hsep of Ha] (B'2) ;
\coordinate[right=\hsep of B'2] (B'1) ;
\node[circle,draw,fill=white](P1) at ($(H)!1/2!(Ha)$) {$P$};
\draw (B'2) to[out=-90,in=-135]coordinate[midway](m) (B'1);
\draw (m)--(P1) to (B1);
\draw[->-] (H) to (P1); \draw[->-] (P1) to (Ha) ;
\end{sd}.
\]
Therefore $(H,P)$ is a quantum bijection.
\end{proof}

\begin{dfn}[{Musto, Reutter, Verdon \cite[Definition 5.11]{Musto2018compositional}}]\label{dfn:Qisom}
Let $\mathcal{G}=(B,\psi,A)$ and $\mathcal{G}'=(B',\psi',A')$ be quantum graphs. 
A \emph{quantum} (resp. \emph{classical}) \emph{isomorphism} 
$(H,P): \mathcal{G}' \to \mathcal{G}$ is a quantum (resp. classical) bijection $(H,P) : (B',\psi') \to (B,\psi)$ satisfying 
\begin{align}
\begin{sd}
\coordinate (B) ;
\coordinate[right=\hsep*3/2 of B] (H) ;
\coordinate[above=\hsep*3/2 of B] (Ha) ;
\coordinate[right=\hsep*3/2 of Ha] (B') ;
\node[circle,draw,fill=white](P) at ($(B)!0.5!(B')$) {$P$};
\draw (B) to coordinate[midway] (G) (P); 
\draw (P) to (B');
\draw[->-] (H) to  (P);
\node[circle,draw,fill=white] at (G) {$A$};
\draw[->-] (P) to (Ha);
\end{sd}
=
\begin{sd}
\coordinate (B) ;
\coordinate[right=\hsep*3/2 of B] (H) ;
\coordinate[above=\hsep*3/2 of B] (Ha) ;
\coordinate[right=\hsep*3/2 of Ha] (B') ;
\node[circle,draw,fill=white](P) at ($(B)!0.5!(B')$) {$P$};
\draw (B) to (P); 
\draw (P) to coordinate[midway] (G) (B');
\draw[->-] (H) to  (P);
\node[circle,draw,fill=white] at (G) {$A'$};
\draw[->-] (P) to (Ha);
\end{sd}.					\label{qgraphiso}
\end{align}
Quantum graphs $\mathcal{G}, \mathcal{G}'$ are said to be quantum (resp. classical) isomorphic if there is a nonzero quantum (resp. classical) isomorphism $(H,P): \mathcal{G}' \to \mathcal{G}$.
\end{dfn}

\begin{rmk}
Quantum isomorphism is denoted by $\cong_{q}$.
Recall that we assume $H$ to be finite-dimensional.
If quantum graphs are quantum isomorphic via possibly infinite dimensional $H$, then they are said to be \emph{$C^*$-algebraically quantum isomorphic} ($\cong_{C^*}$) in Brannan et al. \cite[Definition 4.4]{Brannan2019bigalois}. The authors of \cite{Brannan2019bigalois} also defined \emph{quantum commuting isomorphism} ($\cong_{qc}$), and \emph{algebraic quantum isomorphism} ($\cong_{A^*}$). For quantum graphs, $\cong_{q}, \cong_{qc}\Rightarrow \cong_{C^*} \Leftrightarrow \cong_{A^*}$  (\cite[Corollary 4.8]{Brannan2019bigalois}). 

Since quantum bijections are unitary, finiteness of $\abs{H}$ implies $\abs{B}=\abs{B'}$ for $(B,\psi,A)\cong_q (B',\psi',A')$.
It is shown in \cite[Example 4.13]{Brannan2019bigalois} that there are $C^*$-quantum isomorphic quantum graphs with distinct dimensions, hence our $\cong_q$ is strictly stronger than $\cong_{C^*}$.
\end{rmk}

\begin{dfn}
Given quantum graphs $\mathcal{G},\mathcal{G}'$, the category 
$\QIso(\mathcal{G}',\mathcal{G})$ of quantum isomorphisms is a category that consists of
\begin{itemize}
\item \textbf{Objects:} quantum isomorphisms $(H,P) : \mathcal{G}' \to \mathcal{G}$;
\item \textbf{Morphisms:} intertwiners $f:(H,P)\to(H',P')$.
\end{itemize}
We denote $\QIso(\mathcal{G},\mathcal{G})$ by $\QAut(\mathcal{G})$.
\end{dfn}

\begin{rmk}
Since tensoring with zero annihilates everything, any couple of quantum graphs have a trivial quantum isomorphism $0=(P=0,H=0)$. 
\end{rmk}

\begin{rmk}\label{rmk:autconst}
Let $\mathcal{G}=(B,\psi,A), \mathcal{G}'=(B',\psi',A')$ be quantum graphs, 
and $\{e_i\}_{i=1}^m, \{e'_k\}_{k=1}^n$ be ONB's for $L^2(B,\psi),L^2(B',\psi')$ with $\abs{B}=m, \abs{B'}=n$. 
Note that a quantum isomorphism $(H,P): \mathcal{G}' \to \mathcal{G}$ can be described by operators $P_i^k\in B(H)$ as follows:
\[
P_i^k=\begin{sd}
\coordinate (B) ;
\coordinate[right=\hsep*3/2 of B] (H) ;
\coordinate[above=\hsep*3/2 of B] (Ha) ;
\coordinate[right=\hsep*3/2 of Ha] (B') ;
\node[circle,draw,fill=white](P) at ($(B)!0.5!(B')$) {$P$};
\draw (P) --++(1/2,1/2)node[circle,draw,fill=white](f){${e'_k}^\dagger$}; 
\draw (P) --++(-1/2,-1/2)node[circle,draw,fill=white](e){$e_i$};
\draw[->-] (H) to  (P);
\draw[->-] (P) to (Ha);
\end{sd};
\qquad
P=\sum_{ik}
\begin{sd}
\coordinate (B) ;
\coordinate[right=\hsep*3/2 of B] (H) ;
\coordinate[above=\hsep*3/2 of B] (Ha) ;
\coordinate[right=\hsep*3/2 of Ha] (B') ;
\node[circle,draw,fill=white](P) at ($(B)!0.5!(B')$) {$P_i^k$};
\draw (B') --++(0,-1/2)node[circle,draw,fill=white](f){$e'_k$}; 
\draw (B) --++(0,1/2)node[circle,draw,fill=white](e){$e_i^\dagger$};
\draw[->-] (H) to  (P);
\draw[->-] (P) to (Ha);
\end{sd}.
\]
Then $\tilde{P}: B \to  B' \otimes B(H)$ as in Remark \ref{rmk:qisom/*hom} is explicitly described as $\tilde{P} e_i=\sum_k e'_k \otimes P_i^k$.
In this setting $\tilde{P}$ is a unital $*$-homomorphism since $P$ is a quantum function, 
and the matrix $(P_i^k)_{k,i} \in M_{n,m}(B(H))$ is unitary since $P$ is a quantum bijection (hence unitary by Lemma \ref{lem:qbijunitary}),
and $\tilde{P}A=(A' \otimes \id_{B(H)})\tilde{P}$ since $P$ is a quantum isomorphism.
Note that $m,n$ need not be equal if we allow infinite-dimensional $H$.
By considering universal such $P_i^k$'s, we reach the notion of the quantum automorphism group of a quantum graph as below and the bigalois extention between two quantum graphs introduced in \cite[Definition 4.1]{Brannan2019bigalois}, which we later use in section 4.
\end{rmk}

\begin{dfn}[{Woronowicz \cite[Definition 1.1]{Woronowicz1987compact}, \cite[Definition 1.1]{Woronowicz1998compact}}]
A \emph{compact quantum group} (CQG) is a pair $(\mathcal{A},\Delta)$ of a separable unital $C^*$-algebra $\mathcal{A}$ and a $*$-homomorphism $\Delta:\mathcal{A}\to \mathcal{A} \otimes \mathcal{A}$, so-called comultiplication, satisfying
\begin{description}
\item[(coassociativity)]
$(\Delta \otimes \id_\mathcal{A})\Delta=(\id_\mathcal{A} \otimes \Delta)\Delta$;
\item[(cancellation property)]
$(\mathcal{A} \otimes 1)\Delta(\mathcal{A})$ \, 
 and 
$(1 \otimes \mathcal{A})\Delta(\mathcal{A})$ \, 
 are dense in $\mathcal{A} \otimes \mathcal{A}$.
\end{description}
\end{dfn}

\begin{dfn}[{Brannan et al. \cite[Definition 3.7]{Brannan2019bigalois}}]
Let $\mathcal{G}=(B,\psi,A)$ be a quantum graph and fix an ONB $\{e_i\}_i$ for $L^2(B,\psi)$. 
The quantum automorphism group of $\mathcal{G}$ is 
a CQG $\Qut(\mathcal{G})=(\A,\Delta)$ defined as follows:
\begin{itemize}
\item 
The group algebra $\A$ is the universal unital $C^*$-algebra generated by the coefficients $u_i^k$ of a unitary $u=(u_i^k)_{k,i} \in M_n(\A)$ that makes the operator
\[
\rho:B \ni e_i \mapsto \sum_k e_k \otimes u_i^k \in B \otimes \A
\]
a unital $*$-homomorphism satisfying $\rho \ A=(A \otimes \id)\rho$. This $\rho$ and $u$ are called the fundamental representation;
\item
The comultiplication $\Delta: \A \to \A \otimes \A$ is defined as a $*$-homomorphism satisfying
\[
\Delta u_i^k=\sum_j u_j^k \otimes u_i^j.
\]
\end{itemize}
We have additional operators associated to $\A$, a counit $\epsilon$ and antipode $S$ defined as a $*$-homomorphism $\epsilon:\A \to \C$ and a homomorphism $S: \A \to \A^{op}$
satisfying
\[
\epsilon u_i^k=\delta_{ik}; \qquad S u_i^k ={u_k^i}^*.
\] 
Then $(\A,\Delta,\epsilon,S)$ satisfy 
\begin{align*}
(\Delta \otimes \id)\Delta&=(\id \otimes \Delta)\Delta;
\\
(\epsilon \otimes \id)\Delta&=\id=(\id \otimes \epsilon)\Delta;
\\
m(\epsilon \otimes \id)\Delta&=\epsilon(\cdot)1=m(\id \otimes \epsilon)\Delta.
\end{align*}
Such a quadruple $(\A,\Delta,\epsilon,S)$ is called a Hopf $*$-algebra. 

\end{dfn}

The authors of \cite{Musto2018compositional} investigated the relationship between the category $\QAut(\mathcal{G})$ and the quantum automorphism group $\Qut(\mathcal{G})$ for classical graphs, but they did not introduce the quantum automorphism group of quantum graphs. Here we show the straightforward generalization of the following theorem.

\begin{thm}[{Musto, Reutter, Verdon, \cite[Proposition 5.19]{Musto2018compositional}}]
If $\mathcal{G}$ is a classical graph on $(\C^n,\tau)$, then we have an isomorphism of categories
\[
\Rep_\mathrm{fin}(\A)\cong\QAut(\mathcal{G})
\]
where $\Rep_\mathrm{fin}(\A)$ is the category of finite dimensional $*$-representations of the $C^*$-algebra $\A$, and $\QAut(\mathcal{G})$ is the category of quantum automorphisms on $\mathcal{G}$.
\end{thm}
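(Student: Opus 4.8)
The plan is to exhibit an explicit pair of mutually inverse functors between $\Rep_\mathrm{fin}(\A)$ and $\QAut(\mathcal{G})=\QIso(\mathcal{G},\mathcal{G})$, both acting as the identity on morphisms (on the underlying operators $f$), so that the claimed isomorphism of categories reduces to two dictionary statements: a bijection of objects (finite-dimensional $*$-representations $\longleftrightarrow$ quantum automorphisms) and a matching of morphisms (representation intertwiners $\longleftrightarrow$ intertwiners of quantum functions). Throughout I fix the ONB $\{e_i\}_i$ for $L^2(B,\psi)$ used to define $\Qut(\mathcal{G})=(\A,\Delta)$ together with its fundamental representation $\rho$ and unitary $u=(u_i^k)\in M_n(\A)$.

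For the object correspondence, start from $\pi\in\Rep_\mathrm{fin}(\A)$ on a finite-dimensional $H$ and set $P_i^k\coloneqq\pi(u_i^k)\in B(H)$, reassembled into $P$ via $\tilde P e_i=\sum_k e_k\otimes P_i^k$ as in Remark \ref{rmk:autconst}, so that $\tilde P=(\id_B\otimes\pi)\rho$. Applying the unital $*$-homomorphism $\id\otimes\pi$ to the three features defining $u$ — that $(u_i^k)$ is unitary, that $\rho$ is a unital $*$-homomorphism, and that $\rho A=(A\otimes\id)\rho$ — yields respectively that $(P_i^k)$ is unitary, that $\tilde P$ is a unital $*$-homomorphism, and that $\tilde P A=(A\otimes\id)\tilde P$. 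By Remark \ref{rmk:qisom/*hom} this makes $(H,P)$ a quantum function; by Lemma \ref{lem:qbijunitary} unitarity upgrades it to a quantum bijection; and the last intertwining relation is exactly \eqref{qgraphiso}, so $(H,P)\in\QAut(\mathcal{G})$. Conversely, Remark \ref{rmk:autconst} attaches to any quantum automorphism $(H,P)$ a unitary $(P_i^k)\in M_n(B(H))$ satisfying precisely the relations imposed on $u$, whence the universal property of $\A$ produces a unique $*$-homomorphism $\pi\colon\A\to B(H)$ with $\pi(u_i^k)=P_i^k$. These assignments are visibly inverse to each other on objects.

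The morphism correspondence is the computational heart. I will show that $f\colon H\to H'$ is an intertwiner $(H,P)\to(H',P')$ if and only if $fP_i^k={P'}_i^k f$ for all $i,k$: capping the defining intertwiner square with $e_i$ on the bottom $B$-leg and $e_k^\dagger$ on the top $B$-leg collapses the identity $(\id_B\otimes f)P=P'(f\otimes\id_B)$ to exactly these coefficient equations, using $P(\xi\otimes e_i)=\sum_k e_k\otimes P_i^k\xi$. It then remains to identify $\{f:fP_i^k={P'}_i^kf\ \forall i,k\}$ with the representation intertwiners $\{f:f\pi(a)=\pi'(a)f\ \forall a\in\A\}$. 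One inclusion is immediate by restriction to generators. For the other, the set $S=\{a\in\A:f\pi(a)=\pi'(a)f\}$ is a norm-closed subalgebra containing every $u_i^k$; to see it is $*$-closed I invoke unitarity of $u$: writing $\mathbf P=(P_i^k)$ and $\mathbf P'=({P'}_i^k)$, the relation $(\id\otimes f)\mathbf P=\mathbf P'(\id\otimes f)$ conjugated by the unitaries $\mathbf P,\mathbf P'$ gives $(\id\otimes f)\mathbf P^*=\mathbf P'^*(\id\otimes f)$, i.e.\ $f(P_i^k)^*=({P'}_i^k)^*f$, so each $(u_i^k)^*$ lies in $S$ as well and hence $S=\A$.

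Since both functors are the identity on morphisms, preservation of identities and composites and the mutual-inverse property are automatic, completing $\Rep_\mathrm{fin}(\A)\cong\QAut(\mathcal{G})$. The step I expect to demand the most care is this morphism dictionary: while the coefficient extraction is routine, the passage from ``$f$ intertwines the generators'' to ``$f$ intertwines all of $\A$'' genuinely requires the unitarity of $u$ to force $*$-closedness of $S$, and this is the one place on the morphism level where being a quantum \emph{bijection}, rather than merely a quantum function, is used. For the stated classical case $\mathcal{G}$ on $(\C^n,\tau)$ nothing in the argument simplifies or breaks; the same proof runs verbatim for an arbitrary quantum graph $(B,\psi,A)$, which is the announced generalization.
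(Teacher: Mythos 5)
Your proposal is correct and follows essentially the same route as the paper's proof of Theorem \ref{thm:repcat=qaut} (of which the stated classical case is a special instance): objects correspond via the universal property of $\A$ together with the dictionary of Remark \ref{rmk:autconst}, and morphisms correspond by extracting the coefficient relations $fP_i^k={P'}_i^k f$ from the intertwiner diagram. The one place you go beyond the paper is in justifying the implication from ``$f$ intertwines the generators $u_i^k$'' to ``$f$ intertwines all of $\A$'': the paper asserts this equivalence outright, whereas you correctly observe that the intertwining set is a norm-closed subalgebra that need not be $*$-closed a priori, and you use unitarity of $u$ (via $F\mathbf{P}=\mathbf{P}'F\Rightarrow\mathbf{P}'^{*}F=F\mathbf{P}^{*}$) to close this genuine, if small, gap.
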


\begin{thm}\label{thm:repcat=qaut}
If $G$ is a quantum graph on $(B,\psi)$, then we have an isomorphism of categories
\[
\Rep_\mathrm{fin}(\A)\cong\QAut(\mathcal{G})
\]
where $\Rep_\mathrm{fin}(\A)$ is the category of finite dimensional $*$-representations of the $C^*$-algebra $\A$, and $\QAut(\mathcal{G})$ is the category of quantum automorphisms on $\mathcal{G}$.
\end{thm}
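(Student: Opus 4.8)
The plan is to use the explicit description of quantum automorphisms from Remark \ref{rmk:autconst} to build a functor $F:\QAut(\mathcal{G})\to\Rep_\mathrm{fin}(\A)$ together with an inverse, both acting as the identity on the underlying Hilbert space. Fix the ONB $\{e_i\}$ for $L^2(B,\psi)$ used to define $\A$. By Remark \ref{rmk:autconst}, a quantum automorphism $(H,P)$ is the same data as a tuple of operators $P_i^k\in B(H)$ such that $\tilde{P}:B\to B\otimes B(H)$, $\tilde{P}e_i=\sum_k e_k\otimes P_i^k$, is a unital $*$-homomorphism, the matrix $(P_i^k)_{k,i}$ is unitary (here Lemma \ref{lem:qbijunitary} converts the counit- and comultiplication-preserving conditions (\ref{qbij}) into unitarity of $P$, hence of the matrix $(P_i^k)$), and $\tilde{P}A=(A\otimes\id_{B(H)})\tilde{P}$, which is exactly the graph-preservation condition (\ref{qgraphiso}). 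These are precisely the relations imposed on the coefficients of the fundamental representation generating $\A$.

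First I would define $F$ on objects: given $(H,P)$, the universal property of $\A$ yields a unique $*$-homomorphism $\pi_P:\A\to B(H)$ with $\pi_P(u_i^k)=P_i^k$, since the $P_i^k$ satisfy all the defining relations of $\A$. Conversely, for a finite dimensional $*$-representation $\pi:\A\to B(H)$, setting $P_i^k\coloneqq\pi(u_i^k)$ produces operators satisfying the same relations (a $*$-representation preserves them), hence a quantum automorphism $(H,P_\pi)$ by the reverse direction of the dictionary above. The two assignments are mutually inverse on objects because $\{e_i\}$ is fixed, so $P$ is determined by the $P_i^k$, and because $\{u_i^k\}$ generate $\A$ as a $C^*$-algebra, so $\pi$ is determined by the values $\pi(u_i^k)$.

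Next comes the morphism correspondence. An intertwiner $f:(H,P)\to(H',P')$ in $\QAut(\mathcal{G})$ is an operator $f:H\to H'$ making the intertwiner diagram commute; reading it against the ONB, I expect it to be equivalent to $fP_i^k={P'}_i^k f$ for all $i,k$. This is precisely the condition that $f$ intertwine $\pi_P$ and $\pi_{P'}$, since the $u_i^k$ generate $\A$ and so $f\pi_P(a)=\pi_{P'}(a)f$ for all $a\in\A$ follows from the generator case by linearity and multiplicativity. Thus $F$ is a bijection on each hom-set, and since it is also a bijection on objects with the explicit inverse above, it is an isomorphism of categories; functoriality is immediate because $F$ leaves the operator $f$ unchanged.

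The main obstacle I anticipate is the object-level bookkeeping: verifying that conditions (\ref{qfcn}) and (\ref{qbij}) defining a quantum bijection, together with (\ref{qgraphiso}), translate under the fixed ONB exactly into the relations defining $\A$, with no extra or missing relations. The delicate point is that unitarity of $(P_i^k)$ must capture \emph{both} the counit- and comultiplication-preserving conditions, which is exactly where Lemma \ref{lem:qbijunitary} does the work. Once this dictionary is pinned down, both the object and morphism correspondences follow formally from the universal property of $\A$; notably no step uses commutativity of $B$, so the passage from the classical case is essentially automatic.
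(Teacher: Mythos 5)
Your proposal is correct and follows essentially the same route as the paper's proof: both use the dictionary of Remark \ref{rmk:autconst} (with Lemma \ref{lem:qbijunitary} supplying unitarity of $(P_i^k)$) to match quantum automorphisms with $*$-representations via the universal property of $\A$, and both identify intertwiners on each side by checking the condition on the generators $u_i^k$. The mutual inverses $P\mapsto\pi_P$ and $\pi\mapsto P_\pi$, and the generator-level argument for hom-sets, are exactly the steps the paper carries out.
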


\begin{proof}
As is explained in Remark \ref{rmk:autconst}, the CQG algebra $\A$ is generated by the universal coefficients of a unitary $u=(u_i^k)$ that satisfies exactly the same relation as the unitary $P=(P_i^k)$ of a quantum automorphism $(H,P)$ on $\mathcal{G}=(B,\psi,A)$. 
Therefore 
given a quantum isomorphism $(H,P)$ in $\QAut(\mathcal{G})$, the universality of $\A$ shows the existence of a $*$-representation $\pi_P:\A\ni u_i^k \mapsto P_i^k \in B(H)$. 
Conversely  a $*$-representation $\pi:\A\to B(H)$ defines operators
$P_i^k=\pi(u_i^k)$, which induces a quantum automorphism $P_\pi=\sum_j \ket{e_k} P_i^k \bra{e_i}$.
By construction, it is trivial that $P_{\pi_P}=P$ and $\pi_{P_\pi}=\pi$.
For quantum automorphisms $(H,P),(H',P')$, 
an operator $f:H \to H'$ is an intertwiner $(H,P)\to(H',P')$ in $\QAut(\mathcal{G})$
$\iff$ $(f\otimes\id_B)P=P'(\id_B \otimes f)$ 
$\iff$ $f \pi(u_i^k)=f P_i^k={P'}_i^k f=\pi'(u_i^k) f \ (\forall i,k)$
$\iff$ $f \pi(\cdot)=\pi'(\cdot) f$
$\iff$ $f$ is an intertwiner $\pi \to \pi'$ in $\Rep_\mathrm{fin}(\A)$.
Therefore the intertwiners also coincide. 
\end{proof}

Since finiteness of $\abs{H}$ is not used in the proof of Theorem \ref{thm:repcat=qaut}, if we allow `$\QAut(\mathcal{G})$' to include infinite dimensional quantum isomorphisms as in Remark \ref{rmk:autconst},
then $\Rep(\A)\cong\text{`}\QAut(\mathcal{G})\text{'}$ is obtained.

\section{Quantum graphs on $M_2$}

\subsection{Tracial quantum graphs}

Let $(B=\bigoplus_s M_{n_s}, \tau)$ be a quantum set with the unique tracial $\sqrt{\abs{B}}$-form $\tau=\frac{1}{\abs{B}}\bigoplus_s n_s \Tr_s$.
We always assume that \emph{quantum graphs} are undirected in this chapter.

Let $A=(A_{ij,s}^{kl,r})_{i,j\leq n_s,s}^{k,l\leq n_r,r}$ be a reflexive quantum graph on $(B, \tau)$ parametrized as 
\[
A_{ij,s}^{kl,r}= \braket{\tilde{e_{kl,r}} | A \tilde{e_{ij,s}}}
\textrm{ , i.e., }
A=\sum_{ijsklr} \ket{\tilde{e_{kl,r}}} A_{ij,s}^{kl,r} \bra{\tilde{e_{ij,s}}}
\]
where $\left\{ \widetilde{e_{ij,s}} = \sqrt{\frac{\abs{B}}{n_s}} e_{ij,s} \right\}$ is an ONB for $L^2(B,\tau)$.
Thus $A$ is a self-adjoint ($\ol{A_{ij,s}^{kl,r}}=A^{ij,s}_{kl,r}$) operator satisfying the following:
\begin{align}
	\text{Schur idempotent}
	&\iff \frac{1}{\sqrt{n_s n_r}}\sum_{u, v} A_{iu,s}^{kv,r} A_{uj,s}^{vl,r} = A_{ij,s}^{kl,r};
	\label{Bmat-idempotent}
	\\
	\text{reflexive}
	&\iff \frac{1}{n_s}\sum_u A_{iu,s}^{ku,s} = \delta_{ik} ;
	\label{Bmat-reflexive}
	\\
	\text{undirected (self-transpose)}
	&\iff A_{ij,s}^{kl,r} = A_{lk,r}^{ji,s} ,
	\label{Bmat-*pres}
\end{align}
where the RHS of these equivalences are quantified by $\forall i,j,s,k,l,r$. 

Note that these relations are independent for different pairs $(r,s)$ and $(r',s')$.

\subsection{Tracial quantum graphs on $M_2$}

Let $A=(A_{ij}^{kl})_{i,j=1,2}^{k,l=1,2}$ be a quantum adjacency matrix on $(M_2, \Tr/2)$ with respect to the orthonormal basis
$\left\{ \widetilde{e_{ij}} = \sqrt{2} e_{ij} \right\}$. 
Then
\begin{align}
\frac{1}{2}\left( A_{i1}^{k1} A_{1j}^{1l}+A_{i1}^{k2} A_{1j}^{2l}+A_{i2}^{k1} A_{2j}^{1l}+A_{i2}^{k2} A_{2j}^{2l} \right) 
&= A_{ij}^{kl} \quad\forall i,j,k,l=1,2;
\label{M2-relational}
\\
\frac{1}{2}\left( A_{i1}^{k1}+A_{i2}^{k2} \right) &= \delta_{ik} \quad\forall i,k=1,2;
\label{M2-reflexive}
\\
\ol{A^{ij}_{kl}}=A_{ij}^{kl} &= A_{lk}^{ji} \quad\forall i,j,k,l=1,2.  
\label{M2-*pres}
\end{align}

By the latter two conditions (\ref{M2-reflexive})(\ref{M2-*pres}), $A$ is of the following form where $x,p\in \R$ and $y,z \in\C$: 
\[
\begin{pmatrix}
A^{11}_{11} & A^{11}_{12} & A^{11}_{21} & A^{11}_{22} \\
A^{12}_{11} & A^{12}_{12} & A^{12}_{21} & A^{12}_{22} \\
A^{21}_{11} & A^{21}_{12} & A^{21}_{21} & A^{21}_{22} \\
A^{22}_{11} & A^{22}_{12} & A^{22}_{21} & A^{22}_{22} 
\end{pmatrix}
=
\begin{pmatrix}
p & \ol{y} & y & x \\
y & 2-p & z & -y \\
\ol{y} & \ol{z} & 2-p & -\ol{y} \\
x & -\ol{y} & -y & p 
\end{pmatrix}
\]

Regularity $A1=d1$ holds for some $d\in \R$ if and only if
\begin{align*}
& d1 = A(e_{11} + e_{22})
\\
&= (A_{11}^{11}+A_{22}^{11})e_{11} + (A_{11}^{12}+A_{22}^{12})e_{12} 
   + (A_{11}^{21}+A_{22}^{21})e_{21} + (A_{11}^{22}+A_{22}^{22})e_{22}
\\
&= (p+x)e_{11} + (\overline{y}-\ol{y})e_{12} + (y-y)e_{21} + (x+p)e_{22}
=(p+x)1,
\end{align*}
i.e., this is automatically $p+x=d$-regular.

If $y=0$, then we have $\Spec(A)=\{p\pm x, 2-p \pm \abs{z}\}$.

\begin{thm}\label{thm:QgraphM2}
A reflexive quantum graph $A$ on $(M_2,\tau)$ is classical (and quantum) isomorphic to exactly one of the following $d$-regular quantum graphs.
\begin{description}
\item[$d=1)$] Trivial graph $A_1=\id_B={
\begin{pmatrix}
1 &  &  &  \\
 & 1 &  &  \\
 &  & 1 &  \\
 &  &  & 1
\end{pmatrix}}$, $\Spec(A_1)=\{1,1,1,1\}$.

\item[$d=2)$] $A_2={
\begin{pmatrix}
2 &  &  &  \\
 & 0 &  &  \\
 &  & 0 &  \\
 &  &  & 2 
\end{pmatrix}}$, $\Spec(A_2)=\{2,2,0,0\}$.

\item[$d=3)$] $A_3={
\begin{pmatrix}
1 &  &  & 2 \\
 & 1 &  &  \\
 &  & 1 &  \\
2 &  &  & 1
\end{pmatrix}}$, $\Spec(A_3)=\{3,1,1,-1\}$.

\item[$d=4)$] Complete graph $A_4=4\tau(\cdot)1={
\begin{pmatrix}
2 &  &  & 2 \\
 & 0 &  &  \\
 &  & 0 &  \\
2 &  &  & 2 
\end{pmatrix}}$, $\Spec(A_4)=\{4,0,0,0\}$.

\end{description}
\end{thm}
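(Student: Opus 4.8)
The plan is to reduce an arbitrary reflexive (undirected) quantum graph $A$ on $(M_2,\tau)$ to diagonal form by a classical isomorphism, and then to read off the finitely many diagonal solutions from the defining equations. Recall that a classical isomorphism of quantum graphs on $(M_2,\tau)$ is implemented by a $*$-automorphism $P=\Ad(U)$ of $M_2$ (all are inner, and automatically preserve the unique trace), so that (\ref{qgraphiso}) says that $A$ and $A'$ are related by conjugation by $\Ad(U)$ as operators on $L^2(M_2,\tau)$. Hence it suffices to classify such $A$ up to this conjugation action, and to check that the resulting normal forms are pairwise non-isomorphic.

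First I would set up the right basis. Since $A$ is reflexive and self-transpose, the computation preceding the theorem gives $A1=d1$ with $d=p+x$, and self-adjointness $A^\dagger=A$ then yields $\psi A=d\psi$, so $A$ is $d$-regular. Orthogonally decompose $M_2=\C1\oplus V$ in $L^2(M_2,\tau)$, where $V=1^\perp$ is the space of traceless matrices; self-adjointness forces $AV\subseteq V$, and reality ($*$-preservation) forces $A$ to preserve the real subspace $V_\R$ of traceless self-adjoint matrices, on which $A$ is a real symmetric operator. The Pauli matrices $\{1,\sigma_1,\sigma_2,\sigma_3\}$ form an ONB of $L^2(M_2,\tau)$ (as $\tau(\sigma_\mu\sigma_\nu)=\delta_{\mu\nu}$), and $\{\sigma_1,\sigma_2,\sigma_3\}$ is an orthonormal real basis of $V_\R$ on which $\Ad(U)$, $U\in PU(2)\cong SO(3)$, acts as the full rotation group. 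By the spectral theorem I can therefore choose $U$ so that $A$ becomes diagonal, $A=\mathrm{diag}(d,\lambda_1,\lambda_2,\lambda_3)$, i.e. $A1=d1$ and $A\sigma_i=\lambda_i\sigma_i$ with $\lambda_i\in\R$; in the original coordinates this is exactly the case $y=0$, $z\in\R$.

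Next I would impose the two remaining conditions on this diagonal $A$. With $\delta^2=\abs{M_2}=4$ and the comultiplication written in the Pauli ONB as $m^\dagger\sigma_\nu=\sum_{\alpha,\beta}c^{\alpha\beta}_\nu\,\sigma_\alpha\otimes\sigma_\beta$, $c^{\alpha\beta}_\nu=\tfrac12\Tr(\sigma_\beta\sigma_\alpha\sigma_\nu)$, the Pauli product rule $\sigma_i\sigma_j=\delta_{ij}1+i\sum_k\epsilon_{ijk}\sigma_k$ makes the evaluation of $m(A\otimes A)m^\dagger$ and $m(A\otimes\id)m^\dagger$ on the basis elementary. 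The off-diagonal entries vanish because $\Tr(\sigma_\alpha\sigma_\alpha\sigma_i)=0$ and by antisymmetry of $\epsilon_{ijk}$, and the diagonal entries turn Schur idempotence $m(A\otimes A)m^\dagger=4A$ into
\[
d^2+\lambda_1^2+\lambda_2^2+\lambda_3^2=4d,\qquad d\lambda_i+\lambda_j\lambda_k=2\lambda_i\quad(\{i,j,k\}=\{1,2,3\}),
\]
while reflexivity $m(A\otimes\id)m^\dagger=4\,\id$ reduces to the single linear relation $d+\lambda_1+\lambda_2+\lambda_3=4$. A short manipulation shows the quadratic relation is implied by the others, leaving the system $\lambda_j\lambda_k=(2-d)\lambda_i$ together with $\lambda_1+\lambda_2+\lambda_3=4-d$.

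Finally I would solve this system. Subtracting the equations pairwise gives $(\lambda_i-\lambda_j)\bigl(\lambda_k+(2-d)\bigr)=0$, so a short case analysis on how many of the $\lambda_i$ coincide yields precisely four solutions (up to permuting the $\lambda_i$, which is realized by a rotation): $(d;\lambda_1,\lambda_2,\lambda_3)\in\{(1;1,1,1),(2;2,0,0),(3;1,1,-1),(4;0,0,0)\}$. Translating back through $d=p+x$, $\lambda_3=p-x$, $\lambda_{1,2}=2-p\pm z$ identifies these with $A_1,A_2,A_3,A_4$ respectively, and their spectra $\{d,\lambda_1,\lambda_2,\lambda_3\}$ are the four stated multisets, which are pairwise distinct. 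Since isomorphic quantum graphs are isospectral (conjugation in the classical case, and the intertwining $\tilde P A=(A'\otimes\id)\tilde P$ by an injective unital $*$-homomorphism $\tilde P$ in the quantum case), no two of the four are isomorphic, which gives the ``exactly one''. I expect the main obstacle to be the bookkeeping in the Pauli-basis evaluation of $m(A\otimes A)m^\dagger$ and $m(A\otimes\id)m^\dagger$---keeping the normalizations ($\delta^2=4$ and the factors in $c^{\alpha\beta}_\nu$) straight and verifying that every off-diagonal Schur entry vanishes---rather than the final algebra, which is immediate once the equations are in hand.
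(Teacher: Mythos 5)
Your proposal is correct, and it takes a genuinely different route from the paper. The paper never diagonalizes first: it keeps the full parametrization of $A$ by $(p,x,y,z)$ coming from reflexivity and undirectedness, solves the Schur-idempotence system (\ref{M2-1})--(\ref{M2-5}) in complete generality---obtaining, besides the diagonal ($y=0$) solutions, two continuous families $A^{(2)}_{p,\theta}$, $A^{(3)}_{p,\theta}$ with $y\neq 0$---and then kills these families by exhibiting explicit inner automorphisms: $\ad(u_\theta)$ removes the phase $\theta$ (Lemma \ref{lem:utheta}), $\ad(v_p)$ removes the parameter $p$, and the reflexive-complement identities (\ref{M2-c}), (\ref{M2-ad/c}) transfer the $d=3$ case to $d=2$. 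You instead dispose of the off-diagonal part at the outset: regularity, self-adjointness and reality show $A$ preserves the $3$-dimensional Euclidean space $V_\R$ of traceless self-adjoint matrices, the spectral theorem gives an orthonormal eigenbasis there, and surjectivity of $\Ad\colon SU(2)\to SO(V_\R)\cong SO(3)$ rotates that eigenbasis onto the Pauli frame, so up to classical isomorphism $A$ is diagonal from the start and only the eigenvalue system $\lambda_j\lambda_k=(2-d)\lambda_i$, $\sum_i\lambda_i=4-d$ remains. I checked the details: your structure constants $c^{\alpha\beta}_\nu$, the vanishing of the off-diagonal Schur entries, the redundancy of the quadratic equation (indeed $\sum_{i<j}\lambda_i\lambda_j=(2-d)(4-d)$ turns $d^2+\sum_i\lambda_i^2=4d$ into an identity, paralleling the paper's observation that (\ref{M2-1}) is automatic given (\ref{M2-2})), and the case analysis (all $\lambda_i$ equal gives $d\in\{1,4\}$, exactly two equal gives $d\in\{2,3\}$, all distinct is impossible) are all correct, as is the translation back via $d=p+x$, $\lambda_3=p-x$, $\lambda_{1,2}=2-p\pm z$. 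What each approach buys: yours is shorter and more conceptual, making visible that undirected reflexive quantum graphs on $M_2$ are exactly certain real symmetric operators on $\R^3$---essentially the Lie-algebra/Bloch-sphere viewpoint the paper attributes to Gromada---while the paper's brute-force solution exhibits the explicit solution families and intertwining unitaries, information your spectral-theorem argument treats abstractly. Two points you should make explicit in a write-up: that conjugation by $\Ad(U)$ preserves all four defining conditions (it commutes with $m$, $m^\dagger$, $1$, $\tau$), so the diagonalized operator is again a reflexive undirected quantum graph classically isomorphic to $A$; and that your justification of spectral invariance under quantum isomorphism (injectivity of the unital $*$-homomorphism $\tilde P$ on the simple algebra $M_2$ applied to $\tilde P\,q(A)=(q(A')\otimes\id)\tilde P$) is sound---this is exactly the fact the paper invokes without proof.
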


\begin{proof}
By Schur idempotence (\ref{M2-relational}), we get the following equations:
\begin{align}
2p&=p^2+\abs{y}^2+(2-p)^2+\abs{y}^2        \label{M2-1}
\\
2(2-p)&=p(2-p)-\abs{y}^2+p(2-p)-\abs{y}^2
\iff (p-1)(2-p)=\abs{y}^2                           \label{M2-2}
\\
2x&=\abs{y}^2+x^2+\abs{y}^2+\abs{z}^2     \label{M2-3}
\\
2z&=y^2+xz+y^2+zx  \iff (1-x)z=y^2                              \label{M2-4}
\\
2y&=py+yx-y(2-p)+\ol{y}z                   \label{M2-5}
\end{align}
By (\ref{M2-2}) and (\ref{M2-4}), we get $p\in[1,2]$ and $(p-1)(2-p)=\abs{y}^2=\abs{1-x}\abs{z}$. Hence
\begin{align*}
(\ref{M2-1})
&\iff (p-1+2-p)^2=1^2=1 \quad\textrm{(automatic)} 
\\
(\ref{M2-3})
&\iff (\abs{1-x}+\abs{z})^2=1 \iff \abs{1-x}+\abs{z}=1
\end{align*}

\noindent\textbf{[0]}
 If $y=0$, (\ref{M2-5}) is automatic, $(\ref{M2-2})\iff p=1$ or $2$, 
and $(\ref{M2-4})\iff (1-x)z=0$.
\begin{itemize}
\item
If $x=1$, then $\abs{z}=1$ by (\ref{M2-3}).
\item
If $z=0$, then $\abs{1-x}=1$ by (\ref{M2-3}), hence $x=0,2$
\end{itemize}
Therefore we have the following table under $y=0$:
\[
	\begin{array}{|c|c|c|c|c|}
	p & x & z & d=p+x & \Spec(A)=\{p\pm x, 2-p \pm \abs{z}\} \\ \hline\hline
	1 & 0 & 0 & 1 & \{1,1,1,1\} \\ \hline
	1 & 1 & \T & 2 & \{2,0,2,0\} \\ \hline
	1 & 2 & 0 & 3 & \{3,-1,1,1\} \\ \hline
	2 & 0 & 0 & 2 & \{2,2,0,0\} \\ \hline
	2 & 1 & \T & 3 & \{3,1,1,-1\} \\ \hline
	2 & 2 & 0 & 4 & \{4,0,0,0\} 
	\end{array}
\]
where $\T=\{z\in\C \mid \abs{z}=1\}$.

\noindent\textbf{[1]}
If $y\neq0$,  (\ref{M2-4}) implies $z\neq0$ and hence
\begin{align}
	(\ref{M2-5})
	&\iff (x-2(2-p))y+\ol{y}z=0 \overset{y\neq0}{\iff} (x-2(2-p))y^2+\abs{y}^2 z=0
	\nonumber \\ 
	&\iff ((x-2(2-p))(1-x)+(p-1)(2-p))z=0
	\nonumber \\ 
	&\iff (x-(3-p))(x-(2-p))=0
	\iff x=3-p \textrm{ or } 2-p  \label{M2-5'}
\end{align}
By (\ref{M2-2}), we may put $y= \theta \sqrt{(p-1)(2-p)}$ 
for some $\theta\in\T$. 
Then (\ref{M2-4}) implies 
\[
	z=\frac{y^2}{1-x}= \theta^2 \frac{(p-1)(2-p)}{1-x}.
\]

If $x=2-p$ in (\ref{M2-5'}), then $d=p+x=2$ and 
\[
	z=\theta^2 \frac{(p-1)(2-p)}{p-1}= \theta^2 (2-p)= \theta^2 (4-d-p),
\]
which satisfies (\ref{M2-3}): $\abs{1-x}+\abs{z}= (p-1)+(2-p)=1$, hence all conditions are satisfied.

If $x=3-p$ in (\ref{M2-5'}), then $d=p+x=3$ and 
\[
	z=\theta^2 \frac{(p-1)(2-p)}{p-2}= \theta^2 (1-p) = \theta^2 (4-d-p),
\]
which satisfies (\ref{M2-3}): $\abs{1-x}+\abs{z}= (2-p)+(p-1)=1$, hence all conditions are satisfied.

Therefore we obtain two families of quantum graphs for each $d =2,3$ parametrized by 
 $(p,\theta) \in (1,2)\times\T$ under $\abs{y}=\sqrt{(p-1)(2-p)}\neq0$:  
\begin{align*}
	&d=2) & A^{(2)}_{p,\theta}&=
	\begin{pmatrix}
	p & \ol{\theta} \abs{y} & \theta \abs{y} & 2-p \\
	\theta \abs{y} & 2-p & \theta^2 (2-p) & -\theta \abs{y} \\
	\ol{\theta} \abs{y} & \ol{\theta}^2 (2-p) & 2-p & -\ol{\theta} \abs{y} \\
	2-p & -\ol{\theta} \abs{y} & -\theta \abs{y} & p 
	\end{pmatrix}
	\\
	&d=3) & A^{(3)}_{p,\theta}&=
	\begin{pmatrix}
	p & \ol{\theta} \abs{y} & \theta \abs{y} & 3-p \\
	\theta \abs{y} & 2-p & \theta^2 (1-p) & -\theta \abs{y} \\
	\ol{\theta} \abs{y} & \ol{\theta}^2 (1-p) & 2-p & -\ol{\theta} \abs{y} \\
	3-p & -\ol{\theta} \abs{y} & -\theta \abs{y} & p 
	\end{pmatrix}
\end{align*}
If we take the limits $p\to 1$ or $2$, these graphs converges to $y=0$ cases above.
Hence we may include them as $p \in [1,2]$.

Those graphs $\{A^{(d)}_{p,\theta} \mid \theta\in\T\}$ arising from the sign $\theta$ are mutually isomorphic via inner automorphism of $M_2$ by $u_\theta=\begin{pmatrix}1 & 0 \\ 0 & \theta \end{pmatrix}$:

\begin{lem}\label{lem:utheta}
It follows that
\begin{align}
A^{(d)}_{p,\theta}=\ad(u_\theta^*)A^{(d)}_{p,1}\ad(u_\theta).			\label{M2-utheta}
\end{align}
\end{lem}

\begin{proof}
The adjoint action 
$\ad(u_\theta) 
\begin{pmatrix}
a & b \\ c& d
\end{pmatrix}
= u_\theta \begin{pmatrix}
a & b \\ c& d
\end{pmatrix} u_\theta^*
=\begin{pmatrix}
a & \ol{\theta} b \\ \theta c& d
\end{pmatrix}$ 
has a diagonal unitary matrix expression
\[
	\ad(u_\theta)=
	\begin{pmatrix}
	1 &  &  & 0 \\
	 & \ol{\theta} &  &  \\
	 &  & \theta &  \\
	0 &  &  & 1 
	\end{pmatrix}
\]
with respect to the ONB 
$( \widetilde{e_{11}},\widetilde{e_{12}},\widetilde{e_{21}},\widetilde{e_{22}} )$.
Hence 
\[
	\ad(u_\theta^*) A \ad(u_\theta)=
	\begin{pmatrix}
	1 &  &  & 0 \\
	 & \theta &  &  \\
	 &  & \ol{\theta} &  \\
	0 &  &  & 1 
	\end{pmatrix}
	A
	\begin{pmatrix}
	1 &  &  & 0 \\
	 & \ol{\theta} &  &  \\
	 &  & \theta &  \\
	0 &  &  & 1 
	\end{pmatrix}
\]
 is the entrywise product of $A$ and 
\[
	\begin{pmatrix}
	1 & \ol{\theta} & \theta & 1 \\
	\theta & 1 & \theta^2 & \theta \\
	\ol{\theta} & \ol{\theta}^2 & 1 & \ol{\theta} \\
	1 & \ol{\theta} & \theta & 1 
	\end{pmatrix}.
\] 
Therefore we have 
$A^{(d)}_{p,\theta}=\ad(u_\theta^*)A^{(d)}_{p,1}\ad(u_\theta)$.
\end{proof}

Moreover those graphs $\{A^{(d)}_{p,1} \mid p\in[1,2]\}$ are also mutually isomorphic via inner automorphism of $M_2$ by 
$\displaystyle v_p=\frac{1}{\sqrt{2}}\begin{pmatrix}
\sqrt{1+\sqrt{2-p}}
& \sqrt{1-\sqrt{2-p}} 
\\ -\sqrt{1-\sqrt{2-p}}
& \sqrt{1+\sqrt{2-p}}
\end{pmatrix}$:

\begin{lem}
It follows that
\begin{align}
	\ad(v_p^*) A^{(3)}_{p,1} \ad(v_p) = A^{(3)}_{1,1} 
	\quad \textrm{  and  } \quad
	\ad(v_p) A^{(2)}_{3-p,1} \ad(v_p^*) = A^{(2)}_{2,1} . \label{M2_Ap}
\end{align}
\end{lem}

\begin{proof}
$\ad(v_p)$ has a unitary matrix expression
\[
	\ad(v_p)
	= \frac{1}{2}
	\begin{pmatrix}
	1+\sqrt{2-p} & \sqrt{p-1} & \sqrt{p-1} & 1-\sqrt{2-p} \\
	-\sqrt{p-1} & 1+\sqrt{2-p} & -(1-\sqrt{2-p}) & \sqrt{p-1} \\
	-\sqrt{p-1} & -(1-\sqrt{2-p}) & 1+\sqrt{2-p} & \sqrt{p-1} \\
	1-\sqrt{2-p} & -\sqrt{p-1} & -\sqrt{p-1} & 1+\sqrt{2-p} 
	\end{pmatrix}
\]
with respect to the ONB 
$( \widetilde{e_{11}},\widetilde{e_{12}},\widetilde{e_{21}},\widetilde{e_{22}} )$,
and we can directly compute (\ref{M2_Ap}). 

Abstractly $v_p$ is a unitary matrix such that $\ad(v_p)$ maps the eigenvector 
$\begin{pmatrix} -1 & 0 \\ 0 & 1 \end{pmatrix}$ 
for the eigenvalue $-1$ of $A^{(3)}_{1,1}$ to the eigenvector 
$\begin{pmatrix} -\sqrt{2-p} & \sqrt{p-1} \\ \sqrt{p-1} & \sqrt{2-p} \end{pmatrix}$ 
for the eigenvalue $-1$ of $A^{(3)}_{p,1}$. 
Since $\Spec(A^{(3)}_{p,1})=\{3,1,1,-1\}$ and $\ad(v_p)$ also preserves the eigenvector $1_{M_2}$ for the eigenvalue $3$, the orthogonality of eigenspaces implies 
\begin{align}
	\ad(v_p^*) A^{(3)}_{p,1} \ad(v_p) = A^{(3)}_{1,1}. \label{M2-vp3}
\end{align}

By the correspondence between a quantum graph $A$ and its reflexive complement 
as in Proposition \ref{prop:reflexcomple}
\[ 
	A^c \coloneqq \id_B +\abs{B}\tau(\cdot)1_B-A,
\] 
we do not need the latter equality in (\ref{M2_Ap}) for the proof of Theorem \ref{thm:QgraphM2} because the complement preserves isomorphism classes of quantum graphs, and the graphs of degree $2$ and $3$ are mutual complements. 
But here we show (\ref{M2_Ap}) explicitly. In this case
\begin{align}
	&(A^{(d)}_{p,\theta})^c
	=\id +4\tau(\cdot)1-A^{(d)}_{p,\theta}
	\nonumber \\
	&=
	\begin{pmatrix}
	1 &  &  & 0 \\
	 & 1 &  &  \\
	 &  & 1 &  \\
	0 &  &  & 1 
	\end{pmatrix}
	+
	\begin{pmatrix}
	2 &  &  & 2 \\
	 & 0 &  &  \\
	 &  & 0 &  \\
	2 &  &  & 2 
	\end{pmatrix}
	-
	\begin{pmatrix}
	p & \ol{\theta} \abs{y} & \theta \abs{y} & d-p \\
	\theta \abs{y} & 2-p & \theta^2 (4-d-p) & -\theta \abs{y} \\
	\ol{\theta} \abs{y} & \ol{\theta}^2 (4-d-p) & 2-p & -\ol{\theta} \abs{y} \\
	d-p & -\ol{\theta} \abs{y} & -\theta \abs{y} & p 
	\end{pmatrix}
	\nonumber \\
	&=
	\begin{pmatrix}
	3-p & -\ol{\theta} \abs{y} & -\theta \abs{y} & 2-d+p \\
	-\theta \abs{y} & p-1 & -\theta^2 (4-d-p) & \theta \abs{y} \\
	-\ol{\theta} \abs{y} & -\ol{\theta}^2 (4-d-p) & p-1 & \ol{\theta} \abs{y} \\
	2-d+p & \ol{\theta} \abs{y} & \theta \abs{y} & 3-p 
	\end{pmatrix}
	\nonumber \\
	&= A^{(5-d)}_{3-p,-\theta},			\label{M2-c}
\end{align}
where the last equality follows from $p-1=2-(3-p)$, $2-d+p=(5-d)-(3-p)$, and $-(4-d-p)=4-(5-d)-(3-p)$.
Note that the complement and the conjugation by $\ad(u)$ for unitary $u\in B$ commute as
\begin{align}
	\ad(u^*) A^c \ad(u) 
	&= \ad(u^*)\ad(u) +\abs{B}\tau(u\cdot u^*) u^*1 u- \ad(u^*) A \ad(u)
	\nonumber \\
	&= \id + \abs{B}\tau(\cdot)1 -\ad(u^*) A \ad(u)
	\nonumber \\
	&=\left( \ad(u^*) A \ad(u) \right)^c,		\label{M2-ad/c}
\end{align}
thereby
\begin{align*}
	 \ad(v_p) A^{(2)}_{3-p,1} \ad(v_p^*)
	&\overset{\textrm{(\ref{M2-c})}}{=} \ad(v_p) \left( A^{(3)}_{p,-1} \right)^c \ad(v_p^*) 
	\\
	&\overset{\textrm{(\ref{M2-ad/c})}}{=} \left( \ad(v_p) A^{(3)}_{p,-1} \ad(v_p^*) \right)^c
	\\&
	\overset{\textrm{(\ref{M2-utheta})}}{=} 
		\left( \ad(v_p u_{-1}^*) A^{(3)}_{p,1} \ad(u_{-1} v_p^*) \right)^c
\end{align*}
and since $u_{-1} v_p^*
=\frac{1}{\sqrt{2}}\begin{pmatrix}
\sqrt{1+\sqrt{2-p}}
& -\sqrt{1-\sqrt{2-p}} 
\\ -\sqrt{1-\sqrt{2-p}}
& -\sqrt{1+\sqrt{2-p}}
\end{pmatrix}
=v_p u_{-1}$, 
\begin{align*}
	\phantom{\ad(v_p) A^{(2)}_{3-p,1} \ad(v_p^*)}
	&\overset{\phantom{(5.15)}}{=}\left( \ad(u_{-1}^* v_p^*) A^{(3)}_{p,1} \ad(v_p u_{-1}) \right)^c
	\\&
	\overset{\textrm{(\ref{M2-vp3})}}{=} \left( \ad(u_{-1}^*) A^{(3)}_{1,1} \ad(u_{-1}) \right)^c
	\\
	&\overset{\textrm{(\ref{M2-utheta})}}{=} \left( A^{(3)}_{1,-1} \right)^c
	\overset{\textrm{(\ref{M2-c})}}{=} A^{(2)}_{2,1}.
\end{align*}
\end{proof}

Therefore up to inner automorphism, there is a unique reflexive quantum graph on $M_2$ for every degree $d\in \{1,2,3,4\}$.
Since an inner automorphism is a classical isomorphism (1-dimensional quantum isomorphism) and $\Spec(G)$ is invariant under quantum isomorphism,
the complete system of representatives for the classical and quantum isomorphism classes of quantum graphs on $M_2$ is given by the following.
\[
\begin{array}{|c|c|c|c|c|}
p & x & z=y & d=p+x & \Spec(G)=\{p\pm x, 2-p \pm \abs{z}\} \\ \hline\hline
1 & 0 & 0 & 1 & \{1,1,1,1\} \\ \hline
2 & 0 & 0 & 2 & \{2,2,0,0\} \\ \hline
1 & 2 & 0 & 3 & \{3,-1,1,1\} \\ \hline
2 & 2 & 0 & 4 & \{4,0,0,0\} 
\end{array}
\]
Recall that $G$ above are of the form 
\[
G=
\begin{pmatrix}
p & \ol{y} & y & x \\
y & 2-p & z & -y \\
\ol{y} & \ol{z} & 2-p & -\ol{y} \\
x & -\ol{y} & -y & p 
\end{pmatrix}
=
\begin{pmatrix}
p & 0 & 0 & x \\
0 & 2-p & 0 & 0 \\
0 & 0 & 2-p & 0 \\
x & 0 & 0 & p 
\end{pmatrix},
\]
therefore the table indicates the quantum graphs in the statement.
\end{proof}

Therefore reflexive quantum graph on $M_2$ can be $d$-regular for $d\in \{1,2,3,4\}$.
Hence irreflexive quantum graph on $M_2$ can be $d$-regular for $d\in \{0,1,2,3\}$.

\subsection{Nontracial quantum graphs on $M_2$}

By unitary diagonalization, a faithful state on $M_2$ is a unitary conjugate of one of the Powers states $\omega_q=\Tr(Q \, \cdot)$ 
where $q \in (0,1]$ and 
$\displaystyle 
Q=\frac{1}{1+q^2}\begin{pmatrix} 1 & 0 \\ 0 & q^2 \end{pmatrix}$.

Note that $\omega_1=\tau_{M_2}$, hence we may assume $q \in (0,1)$.

\begin{lem}
The Powers state $\omega_q$ is a $\delta=q+q^{-1}$-form on $M_2$. 
Hence $(M_2,\omega_q)$ is a quantum set.
\end{lem}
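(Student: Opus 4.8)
The plan is to invoke Proposition \ref{prop:deltaform}, which reduces the claim to a single trace computation since $M_2$ is simple and thus has only one direct summand. First I would record that $\omega_q$ is indeed a faithful state: the matrix $Q$ is positive definite for $q\in(0,1)$, and $\Tr(Q)=\tfrac{1}{1+q^2}(1+q^2)=1$, so by the characterization stated before Lemma \ref{lem:ONBinB} (positive definite $Q$ with $\Tr(Q)=1$) the functional $\omega_q=\Tr(Q\,\cdot)$ is a faithful state on $M_2$. This is exactly what is needed for $(M_2,\omega_q)$ to be a candidate quantum set, the remaining point being the $\delta$-form condition.

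By Proposition \ref{prop:deltaform}, $\omega_q$ is a $\delta$-form precisely when $\Tr(Q^{-1})=\delta^2$. I would compute the inverse
\[
Q^{-1}=(1+q^2)\begin{pmatrix} 1 & 0 \\ 0 & q^{-2} \end{pmatrix},
\]
whence
\[
\Tr(Q^{-1})=(1+q^2)(1+q^{-2})=q^2+2+q^{-2}=(q+q^{-1})^2.
\]
Setting $\delta=q+q^{-1}$ therefore meets the criterion, so $\omega_q$ is a $\delta$-form and $(M_2,\omega_q)$ is a quantum set by definition.

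There is no genuine obstacle here; the argument is a direct verification once Proposition \ref{prop:deltaform} is in hand. The only subtlety worth flagging is the normalization prefactor $\tfrac{1}{1+q^2}$ in $Q$: it is precisely this factor that forces $\Tr(Q)=1$ (giving faithfulness as a \emph{state}) while simultaneously producing the clean value $\Tr(Q^{-1})=(q+q^{-1})^2$, so that $\delta=q+q^{-1}$ emerges without any further adjustment.
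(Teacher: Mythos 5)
Your proof is correct and takes essentially the same route as the paper: both reduce the claim to Proposition \ref{prop:deltaform} and compute $\Tr(Q^{-1})=(1+q^2)(1+q^{-2})=(q+q^{-1})^2$. The only difference is that you also spell out why $\omega_q$ is a faithful state, which the paper leaves implicit; this is a harmless (and reasonable) addition.
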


\begin{proof}
It follows from Proposition \ref{prop:deltaform} that
\[
\delta^2=\Tr(Q^{-1})=(1+q^2)(1+q^{-2})=(q+q^{-1})^2.
\]
\end{proof}

Note that we have $Q=\begin{pmatrix}
q\delta & 0 \\ 0 & q^{-1}\delta
\end{pmatrix}^{-1}$.

Let $e_{ij}$ be the $(i,j)$ matrix unit in $M_2$.
By Lemma \ref{lem:ONBinB}, 
$\{\tilde{e_{ij}} \coloneqq e_{ij}Q^{-1/2} 
\}_{ij}$ forms an ONB 
for $L^2(M_2,\omega_q)$. Explicitly these are
\begin{align*}
\tilde{e_{11}} &=\sqrt{1+q^2} e_{11} = \sqrt{q\delta} e_{11};
&
\tilde{e_{12}} &=\sqrt{1+q^{-2}} e_{12} = \sqrt{q^{-1}\delta} e_{12};
\\
\tilde{e_{21}} &=\sqrt{1+q^2} e_{21} = \sqrt{q\delta} e_{21};
&
\tilde{e_{22}} &=\sqrt{1+q^{-2}} e_{22} = \sqrt{q^{-1}\delta} e_{22}.
\end{align*}
Then we have
\begin{align*}
1 &= (q\delta)^{-1/2} \tilde{e_{11}} + (q^{-1}\delta)^{-1/2} \tilde{e_{22}};
&
m(\tilde{e_{ij}}\otimes \tilde{e_{kl}}) &= Q^{-1/2}_{jk} \tilde{e_{il}};
\\
m^\dagger \tilde{e_{ij}} 
&= (q\delta)^{1/2} \tilde{e_{i1}}\otimes \tilde{e_{1j}} 
	+ (q^{-1}\delta)^{1/2} \tilde{e_{i2}}\otimes \tilde{e_{2j}};
&
\omega_q(\tilde{e_{ij}}) &=Q^{1/2}_{ij}.
\end{align*}

For a quantum graph $(M_2,\omega_q,A)$, we put 
$A_{ij}^{kl}=\braket{\tilde{e_{kl}} | A \tilde{e_{ij}}}$ and
\[
A=\begin{pmatrix}
A^{11}_{11} & A^{11}_{12} & A^{11}_{21} & A^{11}_{22} \\
A^{12}_{11} & A^{12}_{12} & A^{12}_{21} & A^{12}_{22} \\
A^{21}_{11} & A^{21}_{12} & A^{21}_{21} & A^{21}_{22} \\
A^{22}_{11} & A^{22}_{12} & A^{22}_{21} & A^{22}_{22} 
\end{pmatrix}
.
\]
Rewriting the diagramatic definitions as equations of the coefficients,
this operator $A$ is:
\begin{description}
\item[a)]
self-adjoint if and only if $A_{ij}^{kl}=\ol{A_{kl}^{ij}}$.

\item[b)]
real if and only if $A_{ij}^{kl}
= \ol{A_{ji}^{lk}} Q_{ii}^{1/2} Q_{jj}^{-1/2} Q_{kk}^{-1/2} Q_{ll}^{1/2}$.

\item[c)]
Schur idempotent if and only if $\delta^2 A_{ij}^{kl} 
= \sum_{u,v} Q_{uu}^{-1/2}Q_{vv}^{-1/2} A_{iu}^{kv} A_{uj}^{vl}$

\phantom{Schur idempotent } 
$= q\delta A_{i1}^{k1} A_{1j}^{1l}
+ \delta A_{i2}^{k1} A_{2j}^{1l}
+ \delta A_{i1}^{k2} A_{1j}^{2l}
+ q^{-1}\delta A_{i2}^{k2} A_{2j}^{2l}$.

\item[d)]
reflexive if and only if $\delta^2 \delta_{i}^{k}
= \sum_{u} Q_{uu}^{-1} A_{iu}^{ku}= q\delta A_{i1}^{k1} + q^{-1}\delta A_{i2}^{k2}$.
\end{description}

\begin{thm}\label{thm:QgraphM2q}
An undirected reflexive quantum graph $(M_2,\omega_q,A)$ with $q\in (0,1),\delta=q+q^{-1}$ is exactly one of the following.
\begin{description}
\item[1)]
The trivial quantum graph $A_1=\id_B=\begin{pmatrix}
1 &  &  & 0 \\
 & 1 &  &  \\
 &  & 1 &  \\
0 &  &  & 1
\end{pmatrix}$, which is $1$-regular with 
$\Spec(A)=\{1,1,1,1\}$.

\item[2)]
$A_2=\begin{pmatrix}
q^{-1}\delta &  &  &  \\
 & 0 &  &  \\
 &  & 0 &  \\
 &  &  & q\delta
\end{pmatrix}$, which is irregular with 
$\Spec(A)=\{q^{-1}\delta,q\delta,0,0\}$.

\item[3)]
$A_3=\begin{pmatrix}
1 &  &  & \delta \\
 & 1 &  &  \\
 &  & 1 &  \\
\delta &  &  & 1
\end{pmatrix}$, which is irregular with
$\Spec(A)=\{1+\delta,1,1,1-\delta\}$.

\item[4)]
The complete quantum graph $A_4=\delta^2 \omega_q(\cdot)1=\begin{pmatrix}
q^{-1}\delta &  &  &  \delta \\
 & 0 &  &  \\
 &  & 0 &  \\
\delta &  &  & q\delta
\end{pmatrix}$, 
which is $\delta^2$-regular with 
$\Spec(A)=\{\delta^2,0,0,0\}$.
\end{description}
\end{thm}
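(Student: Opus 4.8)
The plan is to follow the strategy of the proof of Theorem \ref{thm:QgraphM2}: translate the defining conditions into relations among the $16$ coefficients $A_{ij}^{kl}$, use the linear conditions to cut down the free parameters, and then solve the quadratic Schur idempotence relation by hand.

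First I would impose the linear conditions. By Lemma \ref{lem:sa-undi-*pres} an undirected graph is self-adjoint together with self-transpose, so I combine self-adjointness (a) $A_{ij}^{kl}=\ol{A_{kl}^{ij}}$ with the self-transpose form of (b), namely $A_{ij}^{kl}=A_{lk}^{ji}\,Q_{ii}^{1/2}Q_{jj}^{-1/2}Q_{kk}^{-1/2}Q_{ll}^{1/2}$ (obtained by substituting (a) into the real condition). Writing $Q_{ii}^{1/2}=r_i$ with $r_2/r_1=q$, these two relations already force $A_{11}^{22}=A_{22}^{11}$ to equal a real scalar $x$, identify the pure off-diagonal coefficients in pairs up to explicit powers of $q$, and make $A_{12}^{12}=A_{21}^{21}$ real. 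Then the four reflexivity equations coming from (d),
\[
qA_{11}^{11}+q^{-1}A_{12}^{12}=\delta,\quad qA_{21}^{21}+q^{-1}A_{22}^{22}=\delta,\quad qA_{11}^{21}+q^{-1}A_{12}^{22}=0,\quad qA_{21}^{11}+q^{-1}A_{22}^{12}=0,
\]
eliminate four further coefficients. The outcome should be a $q$-weighted analogue of the tracial normal form, depending on one real ``diagonal'' parameter $p$, the real parameter $x$, and one or two complex ``twist'' parameters $y,z$.

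Next I would substitute this normal form into Schur idempotence (c), i.e. (\ref{Schuridemp}). As in the tracial case this produces a short system of polynomial equations; reading it entry by entry I expect a diagonal equation fixing $p$ and $x$ (the $q$-deformed version of (\ref{M2-1})--(\ref{M2-3})), together with equations of the shape $(\text{affine in }p,x)\cdot z=y^2$ and a phase equation analogous to (\ref{M2-5}). I would then run the case analysis on whether the twist parameter vanishes. The key point, and the main difference from the tracial setting, is that the asymmetric $q$-weights destroy the balance that produced the continuous families $A^{(d)}_{p,\theta}$; I expect the $y\neq0$ branch to become inconsistent for $q\neq1$, so that only $y=z=0$ survives and the solution set is the four honest matrices listed rather than mere isomorphism-class representatives.

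Finally I would shorten the bookkeeping using the reflexive complement of Proposition \ref{prop:reflexcomple}: a direct check gives $A_1^c=A_4$ (since $A_1=\id_B$ and $A_4=\delta^2\omega_q(\cdot)1$) and $A_2^c=A_3$, so the complement is an involution pairing the four candidate solutions. It therefore suffices to solve Schur idempotence on, say, the branch $x=A_{11}^{22}=0$, recover $A_1$ and $A_2$, and then obtain $A_4$ and $A_3$ by complementation, after which the spectra and the (ir)regularity are read off directly from the explicit matrices. The main obstacle I anticipate is purely computational: keeping the asymmetric $q$-factors coming from (b) straight throughout the first two steps, since unlike the tracial case they do not cancel and they propagate into every coefficient of the Schur relation.
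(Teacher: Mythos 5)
Your overall strategy is the same as the paper's: translate the axioms into relations among the sixteen coefficients $A_{ij}^{kl}$ with respect to the ONB $\{\tilde{e_{ij}}\}$ and solve them. However, there is a genuine error in your intermediate claim about the normal form, and it causes you to misplace the key mechanism of the whole proof. You assert that after imposing self-adjointness (a) and reality (b) the operator still depends on ``one or two complex twist parameters $y,z$,'' which are then to be eliminated by a case analysis inside the Schur idempotence relation, where you only \emph{expect} the $y\neq 0$ branch to become inconsistent for $q\neq 1$. In fact the linear conditions alone already force every off-diagonal parameter to vanish when $q\in(0,1)$: the alternating chain (a),(b),(a),(b) does not merely pair coefficients up to powers of $q$, it closes up on the coefficient you started from with a nontrivial factor. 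Concretely,
\[
y \coloneqq A_{12}^{11} \overset{(\text{a})}{=} \ol{A_{11}^{12}} \overset{(\text{b})}{=} q\,A_{11}^{21} \overset{(\text{a})}{=} q\,\ol{A_{21}^{11}} \overset{(\text{b})}{=} q^2 A_{12}^{11}=q^2y,
\]
and similarly $z \coloneqq A_{12}^{21}=q^2 z$ and $y'\coloneqq A_{21}^{22}=q^{-2}y'$, whence $y=y'=z=0$ outright. This is exactly where the rigidity that destroys the tracial families $A^{(d)}_{p,\theta}$ lives; Schur idempotence plays no role in killing the twist parameters. As written, your argument hinges on an unverified hope at the quadratic stage, and your normal form is strictly larger than the family of undirected operators: had the Schur relations failed to exclude $y,z\neq0$ (something you never establish), your classification would have admitted ``solutions'' that are self-adjoint but not real, hence not undirected.

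Once the normal form is corrected to the four real parameters $(p,t,p',x)$ sitting on the diagonal and the corner, the rest of your plan is sound and essentially the paper's: reflexivity gives $\delta=qp+q^{-1}t=qt+q^{-1}p'$, and the Schur relations reduce to $(\delta-qp)(1-p)=0$ together with the decoupled corner equation $\delta x=x^2$, yielding exactly the four matrices with their spectra and (ir)regularity read off directly. Your complementation shortcut is also legitimate: $A_1^c=A_4$, $A_2^c=A_3$, and since $\id_B+\delta^2\omega_q(\cdot)1-A$ is again self-adjoint and real, Proposition \ref{prop:reflexcomple} makes the reflexive complement an involution on undirected reflexive quantum graphs interchanging the branches $x=0$ and $x=\delta$. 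The paper simply has no need of it, because the equation $\delta x=x^2$ for the corner parameter is independent of the equations for $(p,t,p')$.
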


\begin{proof}
By (a) and (b) we put
\begin{align*}
	p &= A_{11}^{11} \overset{(\text{a})}{=} \ol{A_{11}^{11}};
\quad	t = A_{12}^{12} \overset{(\text{a})}{=} \ol{A_{12}^{12}} \overset{(\text{b})}{=} A_{21}^{21};
\\	p' &= A_{22}^{22} \overset{(\text{a})}{=} \ol{A_{22}^{22}};
\quad	x = A_{22}^{11} \overset{(\text{b})}{=} \ol{A_{22}^{11}} \overset{(\text{a})}{=} {A_{11}^{22}};
\\	y &= A_{12}^{11} \overset{(\text{a})}{=} \ol{A_{11}^{12}} \overset{(\text{b})}{=} qA_{11}^{21}
	   \overset{(\text{a})}{=} q\ol{A_{21}^{11}} \overset{(\text{b})}{=} q^2A_{12}^{11};
\\	y' &= A_{21}^{22} \overset{(\text{a})}{=} \ol{A_{22}^{21}} \overset{(\text{b})}{=} q^{-1}A_{22}^{12}
	   \overset{(\text{a})}{=} q^{-1}\ol{A_{12}^{22}} \overset{(\text{b})}{=} q^{-2}A_{21}^{22};
\\	z &= A_{12}^{21} \overset{(\text{a})}{=} \ol{A_{21}^{12}} \overset{(\text{b})}{=} q^{2}A_{12}^{21}.
\end{align*}
Then $y=q^2 y, y'=q^{-2} y', z=q^2 z$ and $0<q<1$ imply $y=y'=z=0$.
Thus 
\[
A=\begin{pmatrix}
A^{11}_{11} & A^{11}_{12} & A^{11}_{21} & A^{11}_{22} \\
A^{12}_{11} & A^{12}_{12} & A^{12}_{21} & A^{12}_{22} \\
A^{21}_{11} & A^{21}_{12} & A^{21}_{21} & A^{21}_{22} \\
A^{22}_{11} & A^{22}_{12} & A^{22}_{21} & A^{22}_{22} 
\end{pmatrix}
=
\begin{pmatrix}
p & 0 & 0 &  x \\
0 & t & 0 & 0 \\
0 & 0 & t & 0 \\
x & 0 & 0 & p'
\end{pmatrix}
\]
where $p,t,p',x \in \R$.
By (d): $\delta \delta_{i}^{k} = q A_{i1}^{k1} + q^{-1} A_{i2}^{k2}$, 
we have
\begin{align}
\delta = q p + q^{-1} t; \quad \delta = q t + q^{-1} p'. \label{eqn:M2wreflex}
\end{align}
By (c$_{ij}^{kl}$): 
$\delta A_{ij}^{kl} 
= q A_{i1}^{k1} A_{1j}^{1l}
+  A_{i2}^{k1} A_{2j}^{1l}
+  A_{i1}^{k2} A_{1j}^{2l}
+ q^{-1} A_{i2}^{k2} A_{2j}^{2l}$,
we obtain
\begin{align*}
	(\text{c}_{11}^{11})&\quad  \delta p = q p^2 + q^{-1} t^2
&	(\text{c}_{12}^{12})&\quad  \delta t = q pt + q^{-1} tp'
\\	(\text{c}_{22}^{22})&\quad  \delta p' = q t^2 + q^{-1} {p'}^2
&	(\text{c}_{22}^{11})&\quad  \delta x = x^2
\end{align*}
Substituting (\ref{eqn:M2wreflex}) for $t$ in (c$_{11}^{11}$),
\begin{align*}
	\delta p = q p^2 + q (\delta - q p)^2
	=q(p^2+ \delta^2 - 2q\delta p +q^2 p^2)
\\	\delta^2 -(q^{-1}+ 2q)\delta p +(1+q^2) p^2=0.
\end{align*}
Since $1+q^2=q\delta$, division by $\delta$ deduces 
\[
\delta-(\delta+q)p+qp^2=(\delta-qp)(1-p)=0.
\] 
Thus (\ref{eqn:M2wreflex}) implies
\begin{align}
(p,t,p')=(1,1,1), (q^{-1}\delta, 0, q\delta). \label{eqn:M2wptp}
\end{align}
These solutions also satisfy (c$_{12}^{12}$) and (c$_{22}^{22}$).
Independently (c$_{22}^{11}$) shows $x=0,\delta$.
Therefore undirected quantum graphs are the four graphs in the statement:
\[
(p,t,p',x)=A_1(1,1,1,0), \ A_2(q^{-1}\delta, 0, q\delta,0), \
	A_3(1,1,1,\delta), \  A_4(q^{-1}\delta, 0, q\delta,\delta).
\]

Now $A$ is $d$-regular if and only if 
$(q\delta)^{1/2}1_{M_2}=q \tilde{e_{11}} + \tilde{e_{22}}$ is an eigenvector of eigenvalue $d$ for $A$:
\[
A\begin{pmatrix}q \\ 0 \\ 0 \\ 1\end{pmatrix}
=\begin{pmatrix} pq+x \\ 0 \\ 0 \\ xq+p'\end{pmatrix}
=d\begin{pmatrix}q \\ 0 \\ 0\\ 1\end{pmatrix},
\]
i.e., $d=p+q^{-1}x=xq+p'$. Thus $A_1$ is $1$-regular, $A_4$ is $\delta^2$-regular, and $A_2, A_3$ are irregular.
\end{proof}

\section{Quantum automorphism groups of quantum graphs on $M_2$}

\subsection{Quantum automorphism groups of tracial $(M_2,\tau,A)$}

For each $d=1,2,3,4$, let $\mathcal{G}_d=(M_2,\tau,A_d)$ be the $d$-regular quantum graph as in Theorem \ref{thm:QgraphM2}.

\begin{thm}\label{thm:QutM2d=14}
The quantum automorphism group of the $1$-regular (trivial) or $4$-regular (complete) quantum graphs on $(M_2,\tau)$ is the special orthogonal group $SO(3)$:
\[
\Qut(\mathcal{G}_1)=\Qut(\mathcal{G}_4) \cong SO(3).
\]
\end{thm}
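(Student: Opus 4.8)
The plan is to first collapse both computations onto the quantum automorphism group of the underlying quantum set $(M_2,\tau)$, and then to identify that group with $SO(3)$ by looking at how a coaction acts on the traceless part of $M_2$.

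\emph{Reduction to the quantum set.} For any object of $\QAut(\mathcal{G})$ the coaction $\rho\colon M_2\to M_2\otimes\A$ is a unital $\ast$-homomorphism whose fundamental matrix $u$ is unitary; by Lemma \ref{lem:qbijunitary} this makes $\rho$ a quantum bijection, so $\rho$ preserves the counit, i.e. $(\tau\otimes\id)\rho=\tau(\cdot)1_\A$. For the trivial graph $A_1=\id_B$ the intertwining relation $\rho A_1=(A_1\otimes\id)\rho$ is vacuous. For the complete graph $A_4=\delta^2\tau(\cdot)1$ a one-line expansion in Sweedler notation gives $(A_4\otimes\id)\rho(x)=\delta^2\,1\otimes(\tau\otimes\id)\rho(x)$ and $\rho(A_4x)=\delta^2\,1\otimes\tau(x)1_\A$, so $\rho A_4=(A_4\otimes\id)\rho$ is \emph{equivalent} to $\tau$-preservation, which already holds. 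Hence the defining relations of $\Qut(\mathcal{G}_1)$ and $\Qut(\mathcal{G}_4)$ coincide with those of the quantum automorphism group of $(M_2,\tau)$, and in particular $\Qut(\mathcal{G}_1)=\Qut(\mathcal{G}_4)$.

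\emph{Pauli set-up.} Write $M_2=\C1\oplus V$ with $V=\Span_\R\{\sigma_1,\sigma_2,\sigma_3\}$ the traceless self-adjoint part, where $\sigma_a$ are the Pauli matrices; then $\tau(\sigma_a\sigma_b)=\delta_{ab}$ and $\tau(\sigma_a)=0$, so $\{\sigma_a\}$ is orthonormal in $L^2(M_2,\tau)$ and orthogonal to $1$. Since $\rho$ is unital, $\ast$-preserving and $\tau$-preserving it maps $V$ into $V\otimes\A$, say $\rho(\sigma_a)=\sum_b\sigma_b\otimes o_{ba}$ with $o_{ba}=o_{ba}^\ast\in\A$; set $O=(o_{ab})\in M_3(\A)$. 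Expanding multiplicativity $\rho(\sigma_a)\rho(\sigma_b)=\rho(\sigma_a\sigma_b)$ against $\sigma_a\sigma_b=\delta_{ab}1+i\sum_c\epsilon_{abc}\sigma_c$ and matching the $1$-component and the $\sigma_e$-component yields
\begin{align*}
\sum_c o_{ca}o_{cb}=\delta_{ab}1,\qquad
\sum_{c,d}\epsilon_{cde}\,o_{ca}o_{db}=\sum_c\epsilon_{abc}\,o_{ec}.
\end{align*}
As the entries are self-adjoint the first relation is $O^\ast O=1$, and since $O$ is a square matrix over a unital $C^\ast$-algebra this forces $OO^\ast=1$ as well, so both rows and columns of $O$ are orthonormal.

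\emph{Commutativity and identification.} The heart of the matter, and the step I expect to be the main obstacle, is to show that these relations force $\A$ to be \emph{commutative} -- that $M_2$ carries no genuine quantum symmetry. The case $a=b$ of the second relation already gives $[o_{ca},o_{da}]=0$, so entries in a common column commute; the strategy is to feed the orthogonality relations back into the general cross-product relation to propagate this to all pairs of entries, and this is the computation that must be pushed through carefully. Granting commutativity, $\A$ becomes the universal commutative unital $C^\ast$-algebra generated by the self-adjoint entries of a $3\times3$ matrix subject to orthogonality together with the cross-product relation; over commuting real coordinates these say precisely that $O$ is orthogonal with $(Oe_a)\times(Oe_b)=O(e_a\times e_b)$, i.e. $\det O=1$, so the character space is exactly $SO(3)$ and $\A\cong C(SO(3))$ by Gelfand duality. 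That every point of $SO(3)$ is genuinely realised is clear from Skolem--Noether: $\Aut(M_2)=PU(2)\cong SO(3)$ acts on $V$ by orientation-preserving rotations. Finally the comultiplication $\Delta u_i^k=\sum_j u_j^k\otimes u_i^j$ restricts to the matrix comultiplication of $O$, which under this identification is the group law of $SO(3)$. Hence $\Qut(\mathcal{G}_1)=\Qut(\mathcal{G}_4)\cong SO(3)$.
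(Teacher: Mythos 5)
Your reduction to the quantum symmetry group of the quantum set $(M_2,\tau)$ is correct and is exactly how the paper begins: for the trivial graph the intertwining relation is vacuous, and for the complete graph it is equivalent to counit preservation, so both quantum automorphism groups coincide with $\Qut(M_2,\tau)$. The problem is what comes after. The entire mathematical content of the theorem is that $\Qut(M_2,\tau)$ is \emph{classical}, i.e.\ that the universal algebra $\A$ is commutative, and this is precisely the step you flag as ``the main obstacle'' and then skip (``Granting commutativity, \dots''). Nothing in your Pauli-matrix relations makes this granting legitimate: the cross-product relation with $a=b$ only gives commutation of entries within a single column of $O$, and the propagation to arbitrary pairs of entries is the hard computation. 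This cannot be deferred as routine, because the analogous statement is \emph{false} for $(M_n,\tau)$ with $n\geq 3$, where the quantum symmetry group is a genuinely noncommutative compact quantum group; any proof must therefore use the $2\times 2$ structure in an essential, nonformal way. The paper sidesteps this by invoking So{\l}tan's computation \cite[Theorem 5.2]{Soltan2010quantum} (building on Wang's definition of the quantum symmetry group), which supplies exactly the presentation of $C(SO(3))$ and the identification you are missing. As it stands, your argument proves only that $\Qut(\mathcal{G}_1)=\Qut(\mathcal{G}_4)=\Qut(M_2,\tau)$ together with a surjection $\A\twoheadrightarrow C(SO(3))$ (every rotation is realised via $\Aut(M_2)\cong SO(3)$); it does not prove that this surjection is injective, which is the theorem.

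A secondary error: you claim $O^\ast O=1$ forces $OO^\ast=1$ ``since $O$ is a square matrix over a unital $C^\ast$-algebra.'' That is false in general --- an isometry in a unital $C^\ast$-algebra need not be a unitary (take all three diagonal entries of $O$ equal to a unilateral shift). The conclusion is still available here, but for a different reason: unitarity of the fundamental matrix $u$ is part of the definition of $\Qut(\mathcal{G})$, and since $\rho$ fixes $1$ and maps the orthocomplement $V_{\C}$ into $V_{\C}\otimes\A$, the matrix $u$ is block diagonal, $u=1\oplus O$, whence $O$ is unitary. This point is easily repaired; the commutativity gap above is not.
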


\begin{proof}
Since the quantum automorphism group of trivial and complete graphs are the quantum symmetry group $\Qut(M_2,\tau)=(A_{aut}(M_2),\Delta)$ defined by Wang \cite[Proposition 4.2]{Wang1998quantum}, the statement follows directly from the computation by So{\l}tan \cite[Theorem 5.2]{Soltan2010quantum}.

Concretely \cite[Theorem 5.2]{Soltan2010quantum} shows
\begin{align*}
&C(\Qut(M_2,\tau)) \cong C(SO(3)) \\
&= C^*\left\langle S,T,R \text{ normal, commuting} \middle|
 ST=-R^2, \abs{S}+\abs{T}=1 \right\rangle
\end{align*}
with compatible comultiplication. This is the universal coefficient algebra with fundamental representation with respect to $(\tilde{e_{11}},\tilde{e_{12}},\tilde{e_{21}},\tilde{e_{22}})$:
\[
	u=(u_{ij}^{kl})=
	\begin{pmatrix}
	1-K & -R & -R^* & K \\
	C & S & T^* & -C \\
	C^* & T & S^* & -C^* \\
	K & R & R^* & 1-K
	\end{pmatrix}
\]
where $K=R^*R+T^*T, C=SR^*-RT$. The generators $S,T,R$ correspond to the coordinate functions $s,t,r$ of $SO(3)$ subgroup of $SU(3)$ as follows:
\[
	v^* SO(3) v =
	\left\{
	\begin{pmatrix}
	s & \ol{t} & \sqrt{2}(r \ol{t}-s\ol{r}) \\
	t & \ol{s} & \sqrt{2}(t \ol{r}-r\ol{s}) \\
	\sqrt{2}r & \sqrt{2}\ol{r} & \abs{s}^2-\abs{t}^2
	\end{pmatrix}
	\middle|
	\begin{matrix}st=-r^2 \\ \abs{s}+\abs{t}=1 \end{matrix}
	\right\}
	\subset SU(3)
\]
where $\displaystyle v=\frac{1}{\sqrt{2}}\begin{pmatrix}
	1 & 1 & 0 \\
	-i & i & 0 \\
	0 & 0 & \sqrt{2}
\end{pmatrix} \in U(3)$ and 
\[
	v
	\begin{pmatrix}
	s & \ol{t} & \sqrt{2}(r \ol{t}-s\ol{r}) \\
	t & \ol{s} & \sqrt{2}(t \ol{r}-r\ol{s}) \\
	\sqrt{2}r & \sqrt{2}\ol{r} & \abs{s}^2-\abs{t}^2
	\end{pmatrix}
	v^*
	=
	\begin{pmatrix}
	\Re(s+t) & -\Im(s+t) & 2\Re(r \ol{t}-s\ol{r}) \\
	\Im(s-t) & \Re(s-t) & 2\Im(r \ol{t}-s\ol{r}) \\
	2\Re r & 2\Im{r} & \abs{s}^2-\abs{t}^2
	\end{pmatrix}.
\]
Since $A_1=\id, A_4 = 4\tau(\cdot)1$ commute with $u$, we have 
$\Qut(\mathcal{G}_1)=\Qut(\mathcal{G}_4)=SO(3)$.
\end{proof}

\begin{thm}\label{thm:QutM2d=23}
The quantum automorphism group of the $2$-regular or $3$-regular quantum graphs on $(M_2,\tau)$ is a subgroup of $SO(3)$ that is isomorphic to the orthogonal group $O(2)$:
\begin{align*}
\Qut(\mathcal{G}_2) &= \Qut(\mathcal{G}_3)
\cong O(2)
\end{align*}
\end{thm}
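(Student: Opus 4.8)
The plan is to realize $\Qut(\mathcal{G}_2)$ and $\Qut(\mathcal{G}_3)$ as a common quotient of $\Qut(M_2,\tau)\cong SO(3)$ from Theorem~\ref{thm:QutM2d=14}, and to read off the quotient explicitly. First I would dispose of the equality $\Qut(\mathcal{G}_2)=\Qut(\mathcal{G}_3)$ using that $\mathcal{G}_2,\mathcal{G}_3$ are reflexive complements: indeed $A_3=A_2^c=\id_B+A_4-A_2$ with $A_4=4\tau(\cdot)1$. For the fundamental representation $u$ this gives $[u,A_3]=[u,A_4]-[u,A_2]$, and since $u$ commutes with both $\id_B$ and $A_4=4\tau(\cdot)1$ (as already used in the proof of Theorem~\ref{thm:QutM2d=14}), the defining intertwining relation $[u,A_3]=0$ is equivalent to $[u,A_2]=0$. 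Hence the two universal coefficient algebras coincide, and it suffices to compute $\Qut(\mathcal{G}_2)$.

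Next I would impose $[u,A_2]=0$ on the fundamental representation of $\Qut(M_2,\tau)\cong SO(3)$,
\[
u=\begin{pmatrix}
1-K & -R & -R^* & K \\
C & S & T^* & -C \\
C^* & T & S^* & -C^* \\
K & R & R^* & 1-K
\end{pmatrix},
\qquad K=R^*R+T^*T,\quad C=SR^*-RT .
\]
Since $A_2=\mathrm{diag}(2,0,0,2)$ in the ONB $(\tilde{e_{11}},\tilde{e_{12}},\tilde{e_{21}},\tilde{e_{22}})$, with eigenvalue $2$ on the indices $\{1,4\}$ and eigenvalue $0$ on $\{2,3\}$, the commutation $[u,A_2]=0$ forces precisely the off-block entries of $u$ to vanish, i.e. $R=0$ (whence $C=SR^*-RT=0$ and $K=T^*T$ automatically), and imposes no further relation on the within-block entries beyond those already present in $C(SO(3))$.

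Therefore $C(\Qut(\mathcal{G}_2))$ is the quotient of $C(SO(3))=C^*\langle S,T,R\text{ normal, commuting}\mid ST=-R^2,\ \abs{S}+\abs{T}=1\rangle$ by $R=0$, namely the commutative algebra $C^*\langle S,T\text{ normal, commuting}\mid ST=0,\ \abs{S}+\abs{T}=1\rangle$. By Gelfand duality this is $C(X)$ with $X=\{(s,t)\in\C^2 : st=0,\ \abs{s}+\abs{t}=1\}$; the condition $st=0$ with $\abs{s}+\abs{t}=1$ forces exactly one coordinate to vanish and the other to be unimodular, so $X=\T\sqcup\T$, matching the two connected components of $O(2)$.

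Finally I would pin down the group law, not merely the underlying space, by carrying the identification through the explicit $SO(3)\subset SU(3)$ coordinates of Theorem~\ref{thm:QutM2d=14} with $r=0$, giving the rotation matrix
\[
\begin{pmatrix}
\Re(s+t) & -\Im(s+t) & 0 \\
\Im(s-t) & \Re(s-t) & 0 \\
0 & 0 & \abs{s}^2-\abs{t}^2
\end{pmatrix}.
\]
On the component $t=0,\abs{s}=1$ the upper-left block is the planar rotation $\begin{pmatrix}\Re s & -\Im s\\ \Im s & \Re s\end{pmatrix}$ with lower-right entry $\abs{s}^2=1$, while on $s=0,\abs{t}=1$ it is the planar reflection $\begin{pmatrix}\Re t & -\Im t\\ -\Im t & -\Re t\end{pmatrix}$ with lower-right entry $-\abs{t}^2=-1$. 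Thus $X$ is exactly the subgroup $\{\mathrm{diag}(g,\det g):g\in O(2)\}\subset SO(3)$, and since the comultiplication on $C(\Qut(\mathcal{G}_2))$ is the restriction of matrix multiplication on $SO(3)$, this is a group isomorphism, yielding $\Qut(\mathcal{G}_2)=\Qut(\mathcal{G}_3)\cong O(2)$. I expect the main obstacle to be this last step: confirming that the quotient inherits the genuine $O(2)$ group structure (with its two components glued as in $O(2)$) rather than merely the disconnected space $\T\sqcup\T$, which is what the explicit embedding into $SO(3)$ is needed to settle.
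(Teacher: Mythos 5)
Your proposal is correct and takes essentially the same route as the paper's proof: the reflexive-complement argument gives $\Qut(\mathcal{G}_2)=\Qut(\mathcal{G}_3)$, imposing $[u,A_2]=0$ on the fundamental representation of $C(SO(3))$ forces exactly $C=R=0$ (with $R=0$ the only new relation since $C=SR^*-RT$), and the quotient is identified with the subgroup $\left\{\mathrm{diag}(x,\det x):x\in O(2)\right\}$ of $SO(3)$ via conjugation by $v$. The only differences are cosmetic: you kill the off-block entries by an eigenspace argument where the paper writes out the matrix products, and you spell out the Gelfand-duality step $\T\sqcup\T$ and the group-law verification that the paper leaves implicit.
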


\begin{proof}
Since $\mathcal{G}_3=\mathcal{G}_2^c$, we have $\Qut(\mathcal{G}_2)=\Qut(\mathcal{G}_3)$. 
It suffices to compute $\quotient{C(SO(3))}{\langle A_2 u=u A_2 \rangle}$.
Then $A_2 u=u A_2$ implies
\begin{align*}
	\begin{pmatrix}
	1-K & -R & -R^* & K \\
	0 & 0 & 0 & 0 \\
	0 & 0 & 0 & 0 \\
	K & R & R^* & 1-K
	\end{pmatrix}
	&=
	\begin{pmatrix}
	1-K & 0 & 0 & K \\
	C & 0 & 0 & -C \\
	C^* & 0 & 0 & -C^* \\
	K & 0 & 0 & 1-K
	\end{pmatrix},
\end{align*}
hence $C=R=0$. 
Since $C=SR^*-RT$, the additional relation is $R=0$.
Then $ST=-R^2=0$, and
 $\abs{S}+\abs{T}=1$ implies $\abs{S}=1$ or $\abs{T}=1$. 
Therefore it follows that
\begin{align*}
	\Qut(\mathcal{G}_2)
	&\cong
	\left\{
		\begin{pmatrix}
		t & 0 & 0 \\
		0 & \ol{t} & 0 \\
		0 & 0 & 1
		\end{pmatrix}, 
		\begin{pmatrix}
		0 & \ol{t} & 0 \\
		t & 0 & 0 \\
		0 & 0 & -1
		\end{pmatrix} 
	\middle| \abs{t}=1
	\right\}
\\
	&\overset{(\ad v)}{\cong} 
	\left\{
		\begin{pmatrix}
		x & 0 \\
		0 & \det x
		\end{pmatrix}
	\middle| x \in O(2)
	\right\}
	\cong O(2).
\end{align*}
where $v$ is as in the proof of Theorem \ref{thm:QutM2d=14}.
\end{proof}

\subsection{Quantum automorphism groups of nontracial $(M_2,\omega_q,A)$}

For each $d=1,2,3,4$, let $\mathcal{G}_d$ be the quantum graph on $(M_2,\omega_q)$ for $q \in (0,1)$ with adjacency operator $A_d$ as in Theorem \ref{thm:QgraphM2q}.

\begin{thm}\label{thm:QutM2q14}
The quantum automorphism groups of the trivial and complete graphs $\mathcal{G}_1, \mathcal{G}_4$ are the quantum special orthogonal group $SO_q(3)$:
\[
\Qut(\mathcal{G}_1)=\Qut(\mathcal{G}_4) \cong SO_q(3).
\]
\end{thm}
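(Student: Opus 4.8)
My plan is to reduce the statement to the computation of the quantum symmetry group of the underlying quantum set $(M_2,\omega_q)$, and then to identify that group with $SO_q(3)$, in parallel with the tracial argument of Theorem~\ref{thm:QutM2d=14}. The first point is that both graph relations are vacuous. For $\Qut(M_2,\omega_q)$ the fundamental representation $\rho$ is a unital $*$-homomorphism whose coefficient matrix $u=(u_{ij}^{kl})$ is unitary, so by Lemma~\ref{lem:qbijunitary} it is a quantum bijection and in particular preserves the counit $\omega_q$ and the comultiplication. It therefore commutes with $\id_B$ trivially, and with $A_4=\delta^2\,\omega_q(\cdot)1$ as well, since $A_4$ is $\delta^2$ times the composite of the counit $\omega_q$ followed by the unit $1$, both of which are preserved by $\rho$. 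Hence the condition $\rho A_d=(A_d\otimes\id)\rho$ imposes no extra relation for $d=1,4$, giving
\[
\Qut(\mathcal{G}_1)=\Qut(\mathcal{G}_4)=\Qut(M_2,\omega_q)=(A_{aut}(M_2,\omega_q),\Delta),
\]
the quantum symmetry group of Wang~\cite{Wang1998quantum}.

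To compute $\Qut(M_2,\omega_q)$ I would write $u=(u_{ij}^{kl})$ in the ONB $\{\tilde{e_{ij}}\}$ of Lemma~\ref{lem:ONBinB} and translate the unitality, multiplicativity and $*$-preservation conditions in (\ref{qfcn}), together with the unitarity of $u$, into explicit polynomial relations among the generators, using the structure maps $m,m^\dagger,\omega_q$ of $(M_2,\omega_q)$ computed in Lemma~\ref{lem:1psimm*}. The one feature distinguishing this from the tracial case of Theorem~\ref{thm:QutM2d=14} is that the weight matrix $Q=\frac{1}{1+q^2}\mathrm{diag}(1,q^2)$ enters through the $*$-structure, so the relations obtained are the $q$-deformations of those cutting out $SO(3)$.

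I would then identify the resulting Hopf $*$-algebra with $C(SO_q(3))$. The conceptual input is that $\omega_q$ is exactly the state on $M_2$ left invariant by the adjoint coaction of $SU_q(2)$: the matrix $Q$ above is the $Q$-matrix of the fundamental corepresentation of $SU_q(2)$. Thus the adjoint corepresentation of $SU_q(2)$ on $M_2\cong\C^2\otimes(\C^2)^*$ is a state-preserving coaction, and since the central $\Z/2$ of $SU_q(2)$ acts trivially it factors through $PSU_q(2)=SO_q(3)$. This produces an inclusion $SO_q(3)\hookrightarrow\Qut(M_2,\omega_q)$, i.e.\ a surjection $A_{aut}(M_2,\omega_q)\twoheadrightarrow C(SO_q(3))$.

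The main obstacle is the reverse inclusion: showing that $\Qut(M_2,\omega_q)$ carries no quantum symmetry beyond $SO_q(3)$, which is the $q$-analogue of So{\l}tan's theorem~\cite[Theorem 5.2]{Soltan2010quantum} used in the tracial case. Concretely one must verify that, after the change of basis diagonalizing the relations from the previous step, the generators satisfy precisely the defining relations of $SO_q(3)$ (the even part of $SU_q(2)$) and generate nothing larger. I expect this to reduce to a size comparison: since $\Rep(SU_q(2))$ has the same fusion rules as $\Rep(SU(2))$, the candidate algebra has the right dimension in each corepresentation degree, so it remains only to check that the explicit $q$-deformed relations force equality rather than a strict enlargement.
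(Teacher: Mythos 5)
Your reduction step is sound and matches the paper: for $A_1=\id_B$ the graph relation is vacuous, and for $A_4=\delta^2\omega_q(\cdot)1$ it follows from preservation of the unit and counit, so $\Qut(\mathcal{G}_1)=\Qut(\mathcal{G}_4)=\Qut(M_2,\omega_q)$, Wang's quantum symmetry group. The gap is in everything after that. Once the reduction is made, the identification $\Qut(M_2,\omega_q)\cong SO_q(3)$ \emph{is} the theorem, and your proposal does not prove it: you construct only the easy half (the adjoint coaction of $SU_q(2)$ preserves $\omega_q$ and factors through $PSU_q(2)=SO_q(3)$, giving a surjection $A_{aut}(M_2,\omega_q)\twoheadrightarrow C(SO_q(3))$), while the maximality --- that the universal algebra carries no relations-compatible quotient beyond $C(SO_q(3))$ --- is exactly what you flag as ``the main obstacle'' and then only ``expect'' to follow from a size comparison. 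That comparison is not an argument: the fusion rules of $\Rep(SU_q(2))$ give you the dimensions of the intertwiner spaces of $SO_q(3)$, not those of the universal quantum group, and a priori the universal object could be strictly larger, with strictly smaller intertwiner spaces. Determining its corepresentation degrees requires precisely the computation you are trying to avoid, e.g.\ a Tannaka--Krein argument showing that the intertwiner spaces of the universal action are generated by the structure maps $m$, $m^\dagger$, $1$, $\omega_q$ of the quantum set (a Temperley--Lieb type computation), or an explicit generators-and-relations analysis.

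What closes the gap --- and what the paper actually does --- is to cite So{\l}tan, who proved this maximality: \cite[Theorem 4.3]{Soltan2010quantum} computes the quantum symmetry group of $(M_2,\omega_q)$ for the Powers state and identifies it with $SO_q(3)$, with explicit generators and the Podle\'s relations. You cite only So{\l}tan's Theorem 5.2 and treat his result as tracial-only, so you set out to re-prove its $q$-deformed analogue from scratch; that analogue is already in the same paper of So{\l}tan and is the external input on which the proof of Theorem \ref{thm:QutM2q14} rests. If you want a self-contained argument instead, you must actually carry out the intertwiner-space or relations computation rather than assume its conclusion.
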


\begin{proof}
Similarly to Theorem \ref{thm:QutM2d=14}, the quantum automorphism group of trivial and complete graphs are the quantum symmetry group $\Qut(M_2,\omega_q)=(A_{aut}^Q(M_2),\Delta)$ defined by Wang \cite[Proposition 4.2]{Wang1998quantum}, and hence the statement follows from the computation by So{\l}tan \cite[Theorem 4.3]{Soltan2010quantum}.

Concretely \cite[Theorem 4.3]{Soltan2010quantum} shows that
\begin{align*}
C(\Qut(M_2,\omega_q)) \cong C(SO_q(3))
= C^*\left\langle A,G,L \right\rangle
\end{align*}
is generated by the universal coefficients of the fundamental representation with respect to $(\tilde{e_{11}},\tilde{e_{12}},\tilde{e_{21}},\tilde{e_{22}})$:
\[
	u=(u_{ij}^{kl})=
	\begin{pmatrix}
	1-q^2 K & -A & -qA^* & qK \\
	qC & L & -q^2 G^* & -C \\
	C^* & -G & L^* & -q^{-1}C^* \\
	qK & q^{-1}A & A^* & 1-K
	\end{pmatrix}
\]
where $K=A^*A+G^*G, C=q^{-1}LA^* + q^2 AG^*$. By Podle\'s \cite[Proposition 3.1]{Podles1995symmetries}, their defining relations are the following:
\begin{align*}
L^*L&=(1-K)(1-q^{-2}K) & LL^*&=(1-q^2 K)(1-q^4 K) 
& G^*G&=GG^*=K^2 \\
A^*A&=C^*C=K-K^2 & AA^*&=CC^*=q^2K-q^4 K^2 
& A^2&=q^{-1}LG \\
LG&=q^4GL & LA&=q^2AL & AG&=q^2GA \\
LG^*&=q^4 G^* L & A^*L&=q^{-1}(1-K)C & LK&=q^4 KL \\
GK&=KG & AK&=q^2 KA & CK&=q^2 KC \\
AC&=CA & & & &
\end{align*}

Since $A_1=\id, A_4 = \delta^2 \omega_q(\cdot)1$ commute with $u$, we have 
$\Qut(\mathcal{G}_1)=\Qut(\mathcal{G}_4)=SO_q(3)$.
\end{proof}

\begin{thm}\label{thm:QutM2q23}
The quantum automorphism groups of $\mathcal{G}_2,\mathcal{G}_3$ are the torus subgroup $\T$ of $SO_q(3)$:
\[
\Qut(\mathcal{G}_2)=\Qut(\mathcal{G}_3) \cong \T < SO_q(3).
\]
\end{thm}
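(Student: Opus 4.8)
The plan is to follow the blueprint of Theorem~\ref{thm:QutM2d=23}: reduce the two graphs to a single computation by complementation, then impose the intertwining relation $A_2 u = u A_2$ on the algebra $C(SO_q(3))$ from Theorem~\ref{thm:QutM2q14} and read off the surviving generators. First I would note that $A_2^c = \id_B + \delta^2\omega_q(\cdot)1 - A_2 = \id_B + A_4 - A_2 = A_3$, so $\mathcal{G}_3$ is the reflexive complement of $\mathcal{G}_2$ in the sense of Proposition~\ref{prop:reflexcomple}. Because the fundamental representation $u$ of $\Qut(M_2,\omega_q)=SO_q(3)$ already commutes with $\id_B$ and with $A_4 = \delta^2\omega_q(\cdot)1$ (the trivial and complete graphs, whose quantum automorphism group is the full $SO_q(3)$ by Theorem~\ref{thm:QutM2q14}), linearity gives $u A_3 = A_3 u \iff u A_2 = A_2 u$. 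Hence $\Qut(\mathcal{G}_2) = \Qut(\mathcal{G}_3)$, and it remains to compute $\quotient{C(SO_q(3))}{\langle A_2 u = u A_2 \rangle}$.

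Next I would use that $A_2 = \mathrm{diag}(q^{-1}\delta, 0, 0, q\delta)$ has eigenvalues $q^{-1}\delta$, $0$ (with multiplicity two) and $q\delta$, which are pairwise distinct precisely because $q \in (0,1)$. Writing the relation entrywise as $(d_a - d_b)u_{ab} = 0$ for $(d_a) = (q^{-1}\delta, 0, 0, q\delta)$, every matrix entry of $u$ joining eigenspaces with different eigenvalues must vanish. Reading these off the explicit matrix in Theorem~\ref{thm:QutM2q14} forces $A = 0$, $K = 0$ and $C = 0$; and since $K = A^*A + G^*G$, the relations $A = 0$ and $K = 0$ yield $G^*G = 0$, hence $G = 0$. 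Conversely, $A = G = 0$ makes $K = 0$ and $C = q^{-1}LA^* + q^2AG^* = 0$ automatic and annihilates every off-diagonal entry, so the quotient is exactly by the ideal generated by $A$ and $G$.

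It then remains to substitute $A = G = K = C = 0$ into the Podle\'s relations recalled in Theorem~\ref{thm:QutM2q14}. Every relation that is homogeneous in $A, G, K, C$ becomes $0 = 0$, while $L^*L = (1-K)(1-q^{-2}K)$ and $LL^* = (1-q^2K)(1-q^4K)$ collapse to $L^*L = LL^* = 1$. Thus the quotient is the universal unital $C^*$-algebra generated by a single unitary $L$, namely $C(\T)$ with $\T = U(1)$, and the fundamental representation degenerates to $\mathrm{diag}(1, L, L^*, 1)$. Computing the coproduct $\Delta u_{ab} = \sum_c u_{ac} \otimes u_{cb}$ on the surviving entry gives $\Delta L = L \otimes L$, which is exactly the Hopf structure of the circle group; this identifies the quotient with $C(\T)$ as a quantum subgroup of $SO_q(3)$ and yields $\Qut(\mathcal{G}_2) = \Qut(\mathcal{G}_3) \cong \T < SO_q(3)$.

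The only bookkeeping obstacle is verifying that the entire list of Podle\'s relations truly degenerates, but since each nontrivial relation is homogeneous in the four generators $A, G, K, C$ that have been killed, this is immediate. The genuine conceptual content is that the distinctness of the eigenvalues of $A_2$ — which relies on $q \neq 1$, unlike the tracial case where the doubly degenerate eigenvalue $0$ left an extra off-diagonal generator alive and produced $O(2)$ — forces all the off-diagonal generators of $SO_q(3)$ to vanish, leaving only the diagonal torus.
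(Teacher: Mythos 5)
Your proposal is correct and follows essentially the same route as the paper: reduce $\mathcal{G}_3$ to $\mathcal{G}_2$ via the reflexive complement, impose $A_2u=uA_2$ entrywise on the explicit fundamental representation of $SO_q(3)$ to force $A=C=K=0$ and hence $G=0$ (using $K=A^*A+G^*G$ and $q\in(0,1)$), and then observe that the Podle\'s relations collapse to $L^*L=LL^*=1$ with $\Delta L=L\otimes L$, identifying the quotient with the torus. Your eigenvalue phrasing $(d_a-d_b)u_{ab}=0$ is just a cleaner packaging of the paper's explicit matrix comparison, not a different argument.
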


\begin{proof}
Since $\mathcal{G}_3=\mathcal{G}_2^c$, we have $\Qut(\mathcal{G}_2)=\Qut(\mathcal{G}_3)$. 
It suffices to compute $\quotient{C(SO(3))}{\langle A_2 u=u A_2 \rangle}$.
Then $A_2 u=u A_2$ implies
{\small
\begin{align*}
	\begin{pmatrix}
	q^{-1}(1-q^2 K) & -q^{-1}A & -A^* & K \\
	0 & 0 & 0 & 0 \\
	0 & 0 & 0 & 0 \\
	q^2 K & A & qA^* & q(1-K)
	\end{pmatrix}
	&=
	\begin{pmatrix}
	q^{-1}(1-q^2 K) & 0 & 0 & q^2K \\
	C & 0 & 0 & -qC \\
	q^{-1}C^* & 0 & 0 & -C^* \\
	K & 0 & 0 & q(1-K)
	\end{pmatrix},
\end{align*}
}
hence $A=C=0, K=q^2K$. Then $K=0$ by $0<q<1$, and $G*G=K-A^*A=0$ implies $G=0$.
Therefore we have 
\[
C(\Qut(\mathcal{G}_2))=C^*\left\langle L \middle| L^*L=LL^*=1 \right\rangle \cong C(\T)
\]
where the last isomorphism is via $L \mapsto z= (\id_\T:\T \to \C)$.
Note that the comultiplication $\Delta$ of $\Qut(\mathcal{G}_2)$ is now characterized by
\[
\Delta(L)=
-qC\otimes A + L \otimes L + G \otimes q^2G^* -q^{-1}A\otimes C
=L \otimes L,
\]
which is isomorphic to the unitary torus 
$\T=(C(\T),\Delta:z \mapsto z \otimes z)$. 
Therefore $\Qut(\mathcal{G}_2)=\T$.
\end{proof}

\subsection{Quantum isomorphisms between quantum graphs on $M_2$ and $\C^4$}

Recall that a regular undirected reflexive classical graph on four vertices is isomorphic to one of the graphs $\mathcal{G}'_d=(\C^4,\tau_{\C^4},A'_d)$ of degree $d=1,2,3,4$ as in \tblref{table:graphC4}.
\begin{table}[hbtp]
	\caption{Regular reflexive graphs on four vertices up to permutation}
	\label{table:graphC4}
	\[
	\begin{array}{r||c|c|c|c}
	 & d=1&d=2&d=3&d=4
	\\
	\mathcal{G}'_d &
	\begin{sd}
	\path coordinate (A) at (0,0) coordinate(B) at (1,0) 
		coordinate(C) at (1,1)  coordinate(D) at (0,1);
	\fill (A)circle(2pt) (B)circle(2pt) (C)circle(2pt) (D)circle(2pt);
	\draw (A) arc(45:-315:0.2) (B) arc(135:-225:0.2)
		(C) arc(-135:225:0.2) (D) arc(-45:315:0.2);
	\end{sd}
	&
	\begin{sd}
	\path coordinate (A) at (0,0) coordinate(B) at (1,0) 
		coordinate(C) at (1,1)  coordinate(D) at (0,1);
	\fill (A)circle(2pt) (B)circle(2pt) (C)circle(2pt) (D)circle(2pt);
	\draw (A) arc(45:-315:0.2) (B) arc(135:-225:0.2)
		(C) arc(-135:225:0.2) (D) arc(-45:315:0.2)
		(A)--(B) (C)--(D);
	\end{sd}
	&
	\begin{sd}
	\path coordinate (A) at (0,0) coordinate(B) at (1,0) 
		coordinate(C) at (1,1)  coordinate(D) at (0,1);
	\fill (A)circle(2pt) (B)circle(2pt) (C)circle(2pt) (D)circle(2pt);
	\draw (A) arc(45:-315:0.2) (B) arc(135:-225:0.2)
		(C) arc(-135:225:0.2) (D) arc(-45:315:0.2)
		(A)--(C)--(B)--(D)--(A);
	\end{sd}
	&
	\begin{sd}
	\path coordinate (A) at (0,0) coordinate(B) at (1,0) 
		coordinate(C) at (1,1)  coordinate(D) at (0,1);
	\fill (A)circle(2pt) (B)circle(2pt) (C)circle(2pt) (D)circle(2pt);
	\draw (A) arc(45:-315:0.2) (B) arc(135:-225:0.2)
		(C) arc(-135:225:0.2) (D) arc(-45:315:0.2)
		(A)--(B)--(C)--(D)--(A)--(C) (B)--(D);
	\end{sd}
	\\
	A'_d &
	\begin{pmatrix}
	1 & 0 & 0 & 0 \\
	0 & 1 & 0 & 0 \\
	0 & 0 & 1 & 0 \\
	0 & 0 & 0 & 1
	\end{pmatrix}
	&
	\begin{pmatrix}
	1 & 1 & 0 & 0 \\
	1 & 1 & 0 & 0 \\
	0 & 0 & 1 & 1 \\
	0 & 0 & 1 & 1
	\end{pmatrix}
	&
	\begin{pmatrix}
	1 & 0 & 1 & 1 \\
	0 & 1 & 1 & 1 \\
	1 & 1 & 1 & 0 \\
	1 & 1 & 0 & 1
	\end{pmatrix}
	&
	\begin{pmatrix}
	1 & 1 & 1 & 1 \\
	1 & 1 & 1 & 1 \\
	1 & 1 & 1 & 1 \\
	1 & 1 & 1 & 1
	\end{pmatrix}
	\\
	\Spec
	& \{1,1,1,1\} & \{2,2,0,0\} & \{3,1,1,-1\} & \{4,0,0,0\}
	\end{array}
\]
\end{table}

By the identity of their spectra, we can expect the quantum isomorphism between $\mathcal{G}_d$ on $(M_2,\tau=\Tr/2)$ and 
$\mathcal{G}'_d$ on $(\C^4,\tau_{\C^4}=\Tr/4)$, and indeed this is the case.

Recall that $\{\tilde{e_{ij}}=\sqrt{2}e_{ij}\}_{i,j=1}^2$ is an ONB for $L^2(M_2,\tau)$ and 
$\{\tilde{e_{r}}=2e_{r}\}_{r=1}^4$ is an ONB for $L^2(\C^4,\tau_{\C^4})$
where $e_{ij},e_r$ are matrix units.

Before considering concrete quantum isomorphisms $(H,P):\mathcal{G}' \to \mathcal{G}$ for some $H$,
we compute the relations of the universal coefficients of quantum isomorphisms:

\begin{dfn}[{\cite[Definition 4.1]{Brannan2019bigalois}}]
Let $\mathcal{G}=(B,\psi,A), \mathcal{G}'=(B',\psi',A')$ be quantum graphs and $\{e_i\}, \{e'_k\}$ be ONB's for $L^2(B,\psi),L^2(B',\psi')$. 
The bigalois extention from $\mathcal{G}'$ to $\mathcal{G}$ is the $*$-algebra $\mathcal{O}(G^+(\mathcal{G}',\mathcal{G}))$ generated by the universal coefficients $(P_{i}^k)$
that make 
\[
P\coloneqq \sum_{ik} \ket{e'_{k}} P_{i}^k \bra{e_{i}} : B \to B' \otimes \mathcal{O}(G^+(\mathcal{G}',\mathcal{G}))
\]
a quantum isomorphism as in Remark \ref{rmk:autconst}, i.e., $P$ is a unital $*$-homomorphism, the matrix $(P_{i}^k)$ is unitary, and 
$PA=(A' \otimes \id_\mathcal{O})P$.
\end{dfn}
Recall that such $(P_{i}^k)$ is unitary if and only if $P$ satisfies the counit and comultiplication preserving conditions by Lemma \ref{lem:qbijunitary}.
If both $\mathcal{G}$ and $\mathcal{G}'$ are trivial $A^{(\prime)}=\id_{B^{(\prime)}}$, then the compatibility with adjacency operators $PA=(A' \otimes \id_\mathcal{O})P$ is trivial.
If both $\mathcal{G}$ and $\mathcal{G}'$ are complete 
$A^{(\prime)}={\delta^{(\prime)}}^2 \psi^{(\prime)}(\cdot)1_{B^{(\prime)}}$, then the compatibility with adjacency operators follows from the compatibility with unit and counit if $\delta=\delta'$.

\begin{lem}\label{lem:bigaloiscomple}
If both $\mathcal{G}$ and $\mathcal{G}'$ are real reflexive quantum graphs equipped with $\delta=\delta'$-forms $\psi,\psi'$, then 
$\mathcal{O}(G^+(\mathcal{G}',\mathcal{G}))=\mathcal{O}(G^+({\mathcal{G}'}^c,\mathcal{G}^c))$ holds for the reflexive complement $\mathcal{G}^c=(B,\psi,A^c=\id_B+{\delta}^2 \psi(\cdot)1_{B}-A)$.
\end{lem}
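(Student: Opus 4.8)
The plan is to show that, once the generators $P_i^k$ are constrained to satisfy the quantum-bijection relations---that $P=\sum_{ik}\ket{e'_k}P_i^k\bra{e_i}$ be a unital $*$-homomorphism and that the matrix $(P_i^k)$ be unitary---the adjacency-compatibility relation $PA=(A'\otimes\id_\mathcal{O})P$ becomes \emph{equivalent} to its complementary version $PA^c=({A'}^c\otimes\id_\mathcal{O})P$. By Proposition \ref{prop:reflexcomple} the complements $\mathcal{G}^c$ and ${\mathcal{G}'}^c$ are again real reflexive quantum graphs on the \emph{same} quantum sets $(B,\psi),(B',\psi')$, so both bigalois extensions $\mathcal{O}(G^+(\mathcal{G}',\mathcal{G}))$ and $\mathcal{O}(G^+({\mathcal{G}'}^c,\mathcal{G}^c))$ are presented by identical generators and identical quantum-bijection relations, differing only in which of the two compatibility relations is imposed. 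The asserted equivalence of these relations will then identify the two universal $*$-algebras.

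First I would record two consequences of the quantum-bijection relations. Unitality of $P$ gives $P(1_B)=1_{B'}\otimes 1_\mathcal{O}$. By Lemma \ref{lem:qbijunitary} the unitarity of $(P_i^k)$ is equivalent to the counit- and comultiplication-preserving conditions (\ref{qbij}), and in the present coefficient formulation the counit-preserving condition reads $(\psi'\otimes\id_\mathcal{O})P=\psi(\cdot)\,1_\mathcal{O}$, i.e. $P$ intertwines the two states. Combining these, for every $x\in B$ one has $({\delta'}^2\psi'(\cdot)1_{B'}\otimes\id_\mathcal{O})P(x)={\delta'}^2\bigl(1_{B'}\otimes(\psi'\otimes\id_\mathcal{O})P(x)\bigr)={\delta'}^2\psi(x)\,(1_{B'}\otimes 1_\mathcal{O})$, while $P\bigl(\delta^2\psi(x)1_B\bigr)=\delta^2\psi(x)\,(1_{B'}\otimes 1_\mathcal{O})$. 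Because $\delta=\delta'$ these coincide, so $P$ intertwines the complete-graph operators; it trivially intertwines $\id_B$ and $\id_{B'}$.

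Next I expand the complements by linearity. Writing $A^c=\id_B+\delta^2\psi(\cdot)1_B-A$ and ${A'}^c=\id_{B'}+{\delta'}^2\psi'(\cdot)1_{B'}-A'$, the two intertwining identities just established cancel the $\id$- and complete-graph-contributions, leaving
\[
PA^c-({A'}^c\otimes\id_\mathcal{O})P=-\bigl(PA-(A'\otimes\id_\mathcal{O})P\bigr).
\]
Hence $PA^c=({A'}^c\otimes\id_\mathcal{O})P$ holds if and only if $PA=(A'\otimes\id_\mathcal{O})P$ holds. The defining relations of $\mathcal{O}(G^+(\mathcal{G}',\mathcal{G}))$ and $\mathcal{O}(G^+({\mathcal{G}'}^c,\mathcal{G}^c))$ therefore coincide, and the two universal algebras are equal.

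The step I expect to require the most care is the identification of the counit-preserving condition of the quantum bijection with the state-intertwining identity $(\psi'\otimes\id_\mathcal{O})P=\psi(\cdot)1_\mathcal{O}$ in the coefficient picture, since it is precisely this identity, together with the hypothesis $\delta=\delta'$, that forces the complete-graph terms to cancel. Without equality of the two $\delta$-forms the complete-graph contributions would differ by the scalar factor $(\delta^2-{\delta'}^2)\psi(\cdot)$, and the argument would break down; this is exactly where the assumption $\delta=\delta'$ is used.
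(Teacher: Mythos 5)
Your proposal is correct and follows essentially the same route as the paper: the paper's proof likewise invokes Proposition \ref{prop:reflexcomple} and then asserts the single cancellation identity $PA^c-({A'}^c\otimes\id_\mathcal{O})P=-\bigl(PA-(A'\otimes\id_\mathcal{O})P\bigr)$ from $\delta=\delta'$, concluding that the defining relations coincide. The only difference is that the paper states this identity without justification, whereas you explicitly verify it via unit-preservation and the counit-intertwining property $(\psi'\otimes\id_\mathcal{O})P=\psi(\cdot)1_\mathcal{O}$ — precisely the details the paper leaves implicit.
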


\begin{proof}
Since $\mathcal{G}$ and $\mathcal{G}'$ are real reflexive, $\mathcal{G}^c$ and ${\mathcal{G}'}^c$ are real reflexive quantum graphs by Proposition \ref{prop:reflexcomple}.
Since $\delta=\delta'$, we have
\[
PA^c-({A'}^c \otimes \id_\mathcal{O})P=-PA+(A' \otimes \id_\mathcal{O})P.
\]
Thus $\mathcal{O}(G^+(\mathcal{G}',\mathcal{G}))=\mathcal{O}(G^+({\mathcal{G}'}^c,\mathcal{G}^c))$.
\end{proof}

\begin{prop}
The bigalois extension $\mathcal{O}_d \coloneqq\mathcal{O}(G^+(\mathcal{G}'_d,\mathcal{G}_d))$ is given by
\begin{align*}
	\mathcal{O}_1=\mathcal{O}_4
	&=*\hyphen \left\langle 
		S_1,S_2,S_3,S_4 
	\middle| 
		\begin{array}{l}
		S_r S_r^* S_r=S_r, \	S_r S_r^* + S_r^* S_r=1, 
		\\
		\sum_{r=1}^4 S_r^* S_r=2, \ \sum_{r=1}^4 S_r=0, 
		\\
		S_s^* S_r = -S_s^* S_s S_r^* S_r \ \forall r \neq s
		\end{array}
	\right\rangle;
	\\
	\mathcal{O}_2=\mathcal{O}_3
	&=
	\quotient{\mathcal{O}_1}{\left\langle 
	S_1+S_2=S_3+S_4=0	
	\right\rangle} 
	=
	*\hyphen \left\langle
		S_1,S_3
	\middle| 
		\begin{array}{l}
		S_r S_r^* S_r=S_r, \\	
		S_r S_r^* + S_r^* S_r=1, \\
		S_1^* S_1+S_3^* S_3=1
		\end{array}
	\right\rangle,
\end{align*}
which arise as the universal coefficients of quantum isomorphism 
$P:M_2 \to \C^4 \otimes \mathcal{O}_d$ in the following way:
\begin{align}
	P\tilde{e_{11}}&=\sum_{r=1}^4 \tilde{e_r} \otimes \frac{S_r S_r^*}{\sqrt{2}} ;
	&P\tilde{e_{12}}&=\sum_{r=1}^4 \tilde{e_r} \otimes \frac{S_r}{\sqrt{2}} ;
	 \nonumber \\
	P\tilde{e_{21}}&=\sum_{r=1}^4 \tilde{e_r} \otimes \frac{S_r^*}{\sqrt{2}} ;
	&P\tilde{e_{22}}&=\sum_{r=1}^4 \tilde{e_r} \otimes \frac{S_r^* S_r}{\sqrt{2}}.
	\label{eqn:coefqisoM2C4}
\end{align}
\end{prop}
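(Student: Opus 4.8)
The plan is to exploit that the universal coefficient map $P\colon M_2\to\C^4\otimes\mathcal{O}_d$ is, by definition, a unital $*$-homomorphism, so it is completely determined by the single element $T\coloneqq P(e_{12})$. Indeed $M_2$ is generated by the partial isometry $e_{12}$ subject to $e_{12}^2=0$, $e_{12}e_{12}^*=e_{11}$, $e_{12}^*e_{12}=e_{22}$ and $e_{11}+e_{22}=1$, so multiplicativity and unitality of $P$ force $P(e_{21})=T^*$, $P(e_{11})=TT^*$, $P(e_{22})=T^*T$, together with $TT^*T=T$ and $TT^*+T^*T=1$. Writing $T=\sum_{r=1}^4 e_r\otimes S_r$ under $\C^4\otimes\mathcal{O}_d=\bigoplus_r\mathcal{O}_d$ and rescaling the matrix units to the orthonormal bases $\tilde{e_{ij}}=\sqrt2\,e_{ij}$, $\tilde{e_r}=2e_r$ reproduces the ansatz (\ref{eqn:coefqisoM2C4}). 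With this form the involution-preserving condition in (\ref{qfcn}) holds automatically, and the two remaining $*$-homomorphism conditions become, componentwise, exactly the local relations $S_rS_r^*S_r=S_r$ and $S_rS_r^*+S_r^*S_r=1$ for every $r$. Throughout I abbreviate the support projections $p_r\coloneqq S_rS_r^*$ and $q_r\coloneqq S_r^*S_r$, which are complementary by the local relations.

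Next I would encode the quantum-bijection conditions. By Lemma \ref{lem:qbijunitary} these amount to unitarity of the coefficient matrix $u=(P_i^k)\in M_4(\mathcal{O}_d)$, i.e. to $u^*u=1$ and $uu^*=1$. Expanding $u^*u=1$ (column orthonormality, which is diagonal in the $\C^4$-index $r$) collapses, using the local relations, to the two global relations $\sum_r S_r=0$ and $\sum_r q_r=2$, the latter being $\sum_r S_r^*S_r=2$, whence $\sum_r p_r=2$ also follows. Expanding $uu^*=1$ (row orthonormality) gives, for each $r\neq s$, the single relation
\[
p_rp_s+S_rS_s^*+S_r^*S_s+q_rq_s=0 \qquad (\star).
\]
The key manipulation is to split $(\star)$ into the stated cross relation: multiplying $(\star)$ on the left by $p_r$ and on the right by $p_s$, and using $p_rS_r=S_r$, $p_rS_r^*=0$, $S_s^*p_s=S_s^*$, $S_sp_s=0$, yields $S_rS_s^*=-p_rp_s$; conjugating instead by $q_r$ and $q_s$ yields $S_r^*S_s=-q_rq_s$, that is $S_s^*S_r=-q_sq_r=-S_s^*S_sS_r^*S_r$. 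Conversely these two relations immediately return $(\star)$. Hence, modulo the local relations, $(\star)$ for all $r\neq s$ is equivalent to the cross relations in the statement, which completes the presentation of $\mathcal{O}_1$. Since the trivial graph is adjacency-compatible for free and $\mathcal{G}_4=\mathcal{G}_1^c$ with $\delta=\delta'=2$, Lemma \ref{lem:bigaloiscomple} gives $\mathcal{O}_4=\mathcal{O}_1$.

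For $d=2,3$, the equality $\mathcal{O}_2=\mathcal{O}_3$ is immediate from $\mathcal{G}_3=\mathcal{G}_2^c$, $\mathcal{G}'_3={\mathcal{G}'_2}^c$ and Lemma \ref{lem:bigaloiscomple}, so it remains to impose the adjacency relation $PA_2=(A'_2\otimes\id)P$. Evaluating both sides on $\tilde{e_{12}}$, where $A_2\tilde{e_{12}}=0$, and reading off the $\tilde{e_r}$-components of $(A'_2\otimes\id)P\tilde{e_{12}}$ through the block structure of $A'_2$ (which pairs the vertices $\{1,2\}$ and $\{3,4\}$) forces $S_1+S_2=0$ and $S_3+S_4=0$; the components on $\tilde{e_{11}},\tilde{e_{22}}$ then give only consequences of these. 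Substituting $S_2=-S_1$ and $S_4=-S_3$ into the presentation of $\mathcal{O}_1$, the relation $\sum_r S_r=0$ becomes vacuous, $\sum_r S_r^*S_r=2$ becomes $S_1^*S_1+S_3^*S_3=1$, and every surviving cross relation reduces to a consequence of the local relations together with $S_1^*S_1+S_3^*S_3=1$: for instance $q_3=1-q_1=p_1$ gives $S_3^*=S_3^*p_3=S_3^*q_1$, and $q_1S_1=0$ then forces $S_3^*S_1=0$. This yields the stated two-generator presentation of $\mathcal{O}_2=\mathcal{O}_3$.

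The main obstacle is completeness bookkeeping rather than any single hard estimate: one must verify that the relations extracted from unitarity are neither too weak nor too strong. The two delicate points are the passage from the four-term orthogonality relation $(\star)$ to the clean bilinear cross relation $S_s^*S_r=-S_s^*S_sS_r^*S_r$ via the projection trick above, and, in the $d=2,3$ quotient, the check that no cross relation survives beyond $S_1^*S_1+S_3^*S_3=1$. Careful tracking of the $\sqrt2$ and factor-of-$2$ normalizations relating the matrix units $e_{ij},e_r$ to the orthonormal bases $\tilde{e_{ij}},\tilde{e_r}$ is also needed, so that the global relation emerges precisely as $\sum_r S_r^*S_r=2$ and not with a different constant.
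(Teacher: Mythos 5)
Your proposal is correct in substance and in all of its computations, and it reaches the paper's presentation by a mildly different route. The paper derives the local relations by writing out every coefficient identity coming from the unit/multiplication/involution conditions (\ref{qfcn}) and then imposes the counit and comultiplication conditions (\ref{qbij}); you instead (i) use that a unital $*$-homomorphism out of $M_2$ is determined by the image $T$ of the partial isometry $e_{12}$ subject to $T^2=0$, $TT^*+T^*T=1$, which gives the local relations componentwise, and (ii) expand unitarity $u^*u=uu^*=1$ of the coefficient matrix directly, which is the form the definition of the bigalois extension actually takes (the paper passes to (\ref{qbij}) through Lemma \ref{lem:qbijunitary}). The two routes are equivalent, and the bookkeeping agrees: your $u^*u=1$ collapses to the two global relations, your $uu^*=1$ gives the orthogonality family $(\star)$, and your treatment of $d=2,3$ (forcing $S_2=-S_1$, $S_4=-S_3$ from the block structure of $A_2'$, then checking which relations survive) and of the complements via Lemma \ref{lem:bigaloiscomple} matches the paper's.

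There is, however, one genuine gap, precisely at the step you call the key manipulation. What you actually prove is that, modulo the local relations, $(\star)$ for all $r\neq s$ is equivalent to the \emph{conjunction} of the two families $S_rS_s^*=-p_rp_s$ and $S_r^*S_s=-q_rq_s$. But the presentation in the statement lists only the $q$-family, so your sentence ``Hence $(\star)$ is equivalent to the cross relations in the statement'' does not follow from what precedes it: you must additionally show that the $p$-family is a consequence of the stated relations, for otherwise the universal $*$-algebra on the stated relations could be strictly larger than the algebra cut out by unitarity, and the presentation of $\mathcal{O}_1$ would be unverified. The missing step is one line: from $S_s^*S_r=-q_sq_r$ multiply on the left by $S_s$ and on the right by $S_r^*$; using $S_sq_s=S_s$ and $q_rS_r^*=S_r^*$ this gives
\[
p_sp_r \;=\; S_s\,(S_s^*S_r)\,S_r^* \;=\; -\,S_sq_sq_rS_r^* \;=\; -\,S_sS_r^*,
\]
which is the $p$-family. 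This is exactly the point the paper's proof covers when it remarks that multiplying the reduced relation $0=S_s^*S_r+S_s^*S_sS_r^*S_r$ by the adjoint coefficients recovers the full comultiplication relation (\ref{eqn:M2C4m`2}). With this line inserted, your argument is complete.
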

Note that the first two defining relations mean that each $S_r$ is a partial isometry where its source and range are mutual orthocomplements.

\begin{proof}
Let $(P_{ij}^r)_{i,j \leq 2}^{r\leq 4}$ be the generators of $\mathcal{O}=\mathcal{O}_d$ that make $P= \sum_{ijr} \ket{\tilde{e_{r}}} P_{ij}^r \bra{\tilde{e_{ij}}} : M_2 \to \C^4 \otimes \mathcal{O}$ a quantum isomorphism. 
The coefficients satisfy the following relations by (\ref{qfcn}), (\ref{qbij}):
\begin{description}
\item[(unit)] 
	$P1_{M_2}=1_{\C^4}\otimes 1_{\mathcal{O}}$, 
	so $1_{M_2}=\frac{1}{\sqrt{2}} (\tilde{e_{11}}+\tilde{e_{22}})$ and 
	$1_{\C^4}=\frac{1}{2}\sum_r \tilde{e_r}$ implies
	\begin{align}
	\sqrt{2}(P_{11}^r + P_{22}^r) = 1_\mathcal{O} \quad \forall r.
	\label{eqn:M2C4unit}
	\end{align}
\item[(multiplication)] 
	$P(\tilde{e_{ij}} \tilde{e_{kl}})=P(\tilde{e_{ij}})P(\tilde{e_{kl}})$, 
	so $\tilde{e_{ij}} \tilde{e_{kl}}=\sqrt{2} \delta_{jk}\tilde{e_{il}}$ 
	and $\bra{\tilde{e_r}}m_{\C^4} = 2 \bra{\tilde{e_r}} \otimes \bra{\tilde{e_r}}$
	implies
	\begin{align}
	\sqrt{2} \delta_{jk} P_{il}^r =2 P_{ij}^r P_{kl}^r \quad \forall i,j,k,l,r.
	\label{eqn:M2C4m}
	\end{align}
\item[(involution)] 
	$P(\tilde{e_{ij}}^*)^*=P(\tilde{e_{ij}})$ implies
	\begin{align}
	{P_{ji}^r}^*=P_{ij}^r \quad \forall i,j,r.
	\label{eqn:M2C4*}
	\end{align}
\item[(counit)] 
	$\tau_{\C^4} P = \tau_{M_2} \otimes 1_{\mathcal{O}}$, 
	so $\tau_{M_2}=\frac{1}{\sqrt{2}} (\bra{\tilde{e_{11}}}+\bra{\tilde{e_{22}}})$
	and $\tau_{\C^4}=\frac{1}{2}\sum_r \bra{\tilde{e_r}}$ imply
	\begin{align}
	\sum_r \sqrt{2} P_{11}^r = \sum_r \sqrt{2} P_{22}^r =2,
\quad	\sum_r \sqrt{2} P_{12}^r = \sum_r \sqrt{2} P_{21}^r =0.
	\label{eqn:M2C4tau}
	\end{align}
\item[(comultiplication)] 
	$m_{\C^4}^\dagger P=m_{\mathcal{O}}(P\otimes P)m_{M_2}^\dagger$, 
	so $m_{M_2}^\dagger \tilde{e_{ij}}
	=\sqrt{2} (\tilde{e_{i1}}\otimes\tilde{e_{1j}}+\tilde{e_{i2}}\otimes\tilde{e_{2j}})$ 
	and $(\bra{\tilde{e_s}} \otimes \bra{\tilde{e_r}})m_{\C^4}^\dagger 
	= 2 \delta_{rs} \bra{\tilde{e_r}}$
	imply
	\begin{align}	
	2 P_{ij}^r &= \sqrt{2} (P_{i1}^r P_{1j}^r + P_{i2}^r P_{2j}^r) \quad \forall i,j,r;
	\label{eqn:M2C4m`1}
\\	0 &= P_{i1}^s P_{1j}^r + P_{i2}^s P_{2j}^r \quad \forall i,j,r,s (r\neq s).
	\label{eqn:M2C4m`2}
	\end{align}
\end{description}
Put $S_r=\sqrt{2} P_{12}^r$, then (\ref{eqn:M2C4*}) and (\ref{eqn:M2C4m}) show
\[
	S_r^*=\sqrt{2} P_{21}^r, 
\quad	S_r^* S_r = \sqrt{2} P_{22}^r,
\quad	S_r S_r^* = \sqrt{2} P_{11}^r.
\]
and 
\[
	(S_r^* S_r)^2 =2 P_{22}^r P_{22}^r= S_r^* S_r,
\quad	(S_r S_r^*)^2 =2 P_{11}^r P_{11}^r= S_r S_r^*.
\]
Thus every $S_r$ is a partial isometry $S_r S_r^* S_r=S_r$ with source projection $\sqrt{2} P_{22}^r$ and range projection $\sqrt{2} P_{11}^r$.
By (\ref{eqn:M2C4unit}), these two projections are mutual orthocomplement
\[
S_r^* S_r + S_r S_r^* =1_\mathcal{O}.
\]
By (\ref{eqn:M2C4tau}), we have
\[
	\sum_r S_r^* S_r = \sum_r S_r S_r^* = 2,
\quad	\sum_r S_r =0.
\]
Since we have 
\[
\sum_r S_r S_r^* = \sum_r (1 - S_r^* S_r) = 4 - \sum_r S_r^* S_r,
\]
the equality $\sum_r S_r S_r^* = 2$ is redundant.
Now (\ref{eqn:M2C4m`1}) follows from (\ref{eqn:M2C4m}):
\[
\sqrt{2} (P_{i1}^r P_{1j}^r + P_{i2}^r P_{2j}^r) 
\overset{\textrm{(\ref{eqn:M2C4m})}}{=} P_{ij}^r + P_{ij}^r
=2 P_{ij}^r.
\]
Multiplying (\ref{eqn:M2C4m`2}) by $\sqrt{2}P_{2i}^s$ from left and by $\sqrt{2}P_{j2}^r$ from right reduces (\ref{eqn:M2C4m`2}) to
\[
0 = P_{21}^s P_{12}^r + P_{22}^s P_{22}^r
= S_s^* S_r + S_s^* S_s S_r^* S_r,
\]
and multiplying by their adjoints recovers (\ref{eqn:M2C4m`2}). 
Hence $S_s^* S_r = - S_s^* S_s S_r^* S_r$.

Since both $\mathcal{G}_1$ and $\mathcal{G}'_1$ are trivial graphs, the above are all the defining relations of $\mathcal{O}_1$.
Since $A^{(\prime)}_4={A^{(\prime)}_{1}}^c, A^{(\prime)}_3={A^{(\prime)}_{2}}^c$ and both $\tau_{M_2}$ and $\tau_{\C^4}$ are $2$-forms, 
we have $\mathcal{O}_4=\mathcal{O}_1, \mathcal{O}_3=\mathcal{O}_2$ by Lemma \ref{lem:bigaloiscomple}.

In the case of $\mathcal{O}_2$, subtracting $P$ from $PA_2=(A'_2 \otimes \id_\mathcal{O})P$ gives $P(A_2-\id_{M_2})=((A'-\id_{\C^4}) \otimes \id_\mathcal{O})P$. By matrix presentation with respect to the ONB's,
{\small
\begin{align*}
	(P_{ij}^r)(A_2-\id_{M_2})
	&=(A'_2-\id_{\C^4})(P_{ij}^r)
\\
	\begin{pmatrix}
	S_1 S_1^* & S_1 & S_1^* & S_1^* S_1 \\ 
	S_2 S_2^* & S_2 & S_2^* & S_2^* S_2 \\ 
	S_3 S_3^* & S_3 & S_3^* & S_3^* S_3 \\ 
	S_4 S_4^* & S_4 & S_4^* & S_4^* S_4
	\end{pmatrix}
	\begin{pmatrix}
	1 &  &  & 0 \\
	 & -1 &  &  \\
	 &  & -1 &  \\
	0 &  &  & 1 
	\end{pmatrix}
	&=
	\begin{pmatrix}
	0 & 1 & 0 & 0 \\
	1 & 0 & 0 & 0 \\
	0 & 0 & 0 & 1 \\
	0 & 0 & 1 & 0
	\end{pmatrix}
	\begin{pmatrix}
	S_1 S_1^* & S_1 & S_1^* & S_1^* S_1 \\ 
	S_2 S_2^* & S_2 & S_2^* & S_2^* S_2 \\ 
	S_3 S_3^* & S_3 & S_3^* & S_3^* S_3 \\ 
	S_4 S_4^* & S_4 & S_4^* & S_4^* S_4
	\end{pmatrix}
\\	
	\begin{pmatrix}
	S_1 S_1^* & -S_1 & -S_1^* & S_1^* S_1 \\ 
	S_2 S_2^* & -S_2 & -S_2^* & S_2^* S_2 \\ 
	S_3 S_3^* & -S_3 & -S_3^* & S_3^* S_3 \\ 
	S_4 S_4^* & -S_4 & -S_4^* & S_4^* S_4
	\end{pmatrix}
	&=
	\begin{pmatrix}
	S_2 S_2^* & S_2 & S_2^* & S_2^* S_2 \\ 
	S_1 S_1^* & S_1 & S_1^* & S_1^* S_1 \\ 
	S_4 S_4^* & S_4 & S_4^* & S_4^* S_4 \\ 
	S_3 S_3^* & S_3 & S_3^* & S_3^* S_3
	\end{pmatrix}.
\end{align*}
}
Hence $S_2=-S_1, S_4=-S_3$, and $\mathcal{O}_2=
\quotient{\mathcal{O}_1}{\langle S_1+S_2=S_3+S_4=0 \rangle}$.
Then $\sum_{r=1}^4 S_r^* S_r=2$ reduces to 
\[
S_1^* S_1+S_3^* S_3=1_\mathcal{O},
\] 
and $\sum_{r=1}^4 S_r=0$ follows automatically.
Finally $S_s^* S_r = - S_s^* S_s S_r^* S_r$ is automatic for $\{r,s\}=\{1,2\}, \{3,4\}$, and the rest $\{r,s\}$ follows from 
\begin{align*}
	- S_1^* S_1 S_3^* S_3 &=- S_1^* S_1 (1-S_1^* S_1)=0;
\\	S_1^* S_3 &= S_1^* S_3 S_3^* S_3 = S_1^* (1-S_1 S_1^*) S_3=0.
\end{align*}
\end{proof}

In order to show quantum isomorphism, we construct a nonzero $*$-representation of the bigalois extension on a Hilbert space.

\begin{thm}
The bigalois extension $\mathcal{O}_d \ (d=1,2,3,4)$ admits a two-dimensional $*$-representation $\pi:\mathcal{O}_d \to M_2$ defined by
\begin{align*}
\pi(S_1)&=\begin{pmatrix} 0 & 1 \\ 0 & 0 \end{pmatrix};
&\pi(S_2)&=\begin{pmatrix} 0 & -1 \\ 0 & 0 \end{pmatrix};
&\pi(S_3)&=\begin{pmatrix} 0 & 0 \\ 1 & 0 \end{pmatrix};
&\pi(S_4)&=\begin{pmatrix} 0 & 0 \\ -1 & 0 \end{pmatrix}.
\end{align*}
\end{thm}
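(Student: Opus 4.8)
The plan is to invoke the universal property of $\mathcal{O}_d$. By the preceding proposition, $\mathcal{O}_d$ is the universal $*$-algebra on generators $S_1,\dots,S_4$ subject to the explicitly listed relations, so a $*$-representation $\pi:\mathcal{O}_d\to M_2$ exists as soon as the proposed matrices satisfy every one of those relations; compatibility with the involution is automatic once we declare $\pi$ to be a $*$-homomorphism, since $\pi(S_r^*)$ is then forced to be $\pi(S_r)^*$. Thus the entire proof reduces to a finite matrix verification. I would first record the images in terms of matrix units $e_{12},e_{21}\in M_2$: namely $\pi(S_1)=e_{12}$, $\pi(S_2)=-e_{12}$, $\pi(S_3)=e_{21}$, $\pi(S_4)=-e_{21}$, so that $\pi(S_1)^*=\pi(S_3)=e_{21}$ and $\pi(S_3)^*=\pi(S_1)=e_{12}$. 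The source and range projections are then the diagonal matrix units: $\pi(S_1)^*\pi(S_1)=e_{22}$, $\pi(S_1)\pi(S_1)^*=e_{11}$, and symmetrically for the others.

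Next I would check the single-index relations. Each $\pi(S_r)$ is a partial isometry, e.g.\ $e_{12}e_{21}e_{12}=e_{11}e_{12}=e_{12}$, and likewise for the signed copies; and $\pi(S_r)\pi(S_r)^*+\pi(S_r)^*\pi(S_r)=e_{11}+e_{22}=1$ in every case. For the global relations, $\sum_{r=1}^4\pi(S_r)^*\pi(S_r)=e_{22}+e_{22}+e_{11}+e_{11}=2\cdot 1$, while $\sum_{r=1}^4\pi(S_r)=e_{12}-e_{12}+e_{21}-e_{21}=0$, the signs cancelling in the pairs $\{1,2\}$ and $\{3,4\}$.

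It then remains to verify the cross relation $\pi(S_s)^*\pi(S_r)=-\pi(S_s)^*\pi(S_s)\pi(S_r)^*\pi(S_r)$ for $r\neq s$, which I would organize by cases. When $\{r,s\}=\{1,2\}$ or $\{3,4\}$ both generators are $\pm$ the same matrix unit, and both sides reduce to the same signed diagonal matrix unit (for instance $\pi(S_2)^*\pi(S_1)=-e_{22}=-\pi(S_2)^*\pi(S_2)\pi(S_1)^*\pi(S_1)$). When $r,s$ lie in different pairs, a product such as $e_{12}e_{12}=0$ or $e_{11}e_{22}=0$ forces both sides to vanish. This establishes that $\pi$ respects all defining relations of $\mathcal{O}_1=\mathcal{O}_4$; for $\mathcal{O}_2=\mathcal{O}_3$ I would additionally observe that $\pi(S_1)+\pi(S_2)=0$ and $\pi(S_3)+\pi(S_4)=0$, so $\pi$ factors through the quotient, covering all four cases simultaneously.

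There is essentially no serious obstacle here — the argument is a bounded computation — so the only points requiring care are bookkeeping ones: confirming that the relations checked are exactly the full defining set, and noting that $\pi$ is manifestly nonzero (indeed $\pi(S_1)=e_{12}\neq 0$), which is the feature actually needed to conclude, via Theorem~\ref{thm:repcat=qaut} and the construction of $\mathcal{O}_d$, that the corresponding quantum graphs are quantum isomorphic.
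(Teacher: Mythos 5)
Your proof is correct, and it rests on the same basic strategy as the paper's: invoke the universal property of the $*$-algebra presentation and reduce everything to a finite matrix check. The organizational difference is the direction in which you exploit the quotient structure. You verify the \emph{full} relation set of $\mathcal{O}_1=\mathcal{O}_4$ — partial isometry, $S_rS_r^*+S_r^*S_r=1$, $\sum_r S_r^*S_r=2$, $\sum_r S_r=0$, and all the cross relations $S_s^*S_r=-S_s^*S_sS_r^*S_r$ for $r\neq s$ — and then observe that $\pi$ kills $S_1+S_2$ and $S_3+S_4$, hence factors through $\mathcal{O}_2=\mathcal{O}_3$. The paper goes the other way: since $\mathcal{O}_2$ is a quotient of $\mathcal{O}_1$, any representation of $\mathcal{O}_2$ pulls back along $\mathcal{O}_1\twoheadrightarrow\mathcal{O}_2$ to one of $\mathcal{O}_1$, so it suffices to check only the three relations in the presentation of $\mathcal{O}_2$ in terms of $S_1,S_3$ (partial isometries with complementary source and range, plus $S_1^*S_1+S_3^*S_3=1$); the cross relations never need to be touched, because in that presentation they are consequences. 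Your route costs a longer case analysis (the $\{1,2\}$/$\{3,4\}$ pairs versus mixed pairs), but it buys an independent confirmation that the displayed matrices do satisfy the full presentation of $\mathcal{O}_1$, rather than relying on the proposition's claim that the smaller presentation of the quotient is equivalent; the paper's route is the more economical one. Your closing remarks — that the relations checked are exactly the defining set, and that $\pi\neq 0$ is what feeds into the quantum-isomorphism corollary — are both accurate.
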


\begin{proof}
It suffices to show that $\pi$ is a $*$-homomorphism for $d=2$ because 
$\mathcal{O}_2=\mathcal{O}_3$ is a quotient of $\mathcal{O}_1=\mathcal{O}_4$.
By definition $\pi(S_r)$ is a partial isometry with orthogonal source and range,  $\C \begin{pmatrix} 1 \\ 0 \end{pmatrix}$ and
$\C \begin{pmatrix} 0 \\ 1 \end{pmatrix}$,
which span $\C^2$. 
Trivially $\pi(S_1)+\pi(S_2)=\pi(S_3)+\pi(S_4)=0$ is satisfied, and we also have
\[
\pi(S_1)^* \pi(S_1)+\pi(S_3)^* \pi(S_3)
=\begin{pmatrix} 1 & 0 \\ 0 & 0 \end{pmatrix}
+\begin{pmatrix} 0 & 0 \\ 0 & 1 \end{pmatrix}=1_{M_2}.
\]
Thus $\pi$ defines a unital $*$-homomorphism $\mathcal{O}_2 \to M_2$.
\end{proof}

\begin{cor}\label{cor:M2C4qiso}
For every $d=1,2,3,4$, the quantum graph $\mathcal{G}_d$ on $(M_2,\tau)$ and the classical graph $\mathcal{G}'_d$ on four vertices are quantum isomorphic.
\end{cor}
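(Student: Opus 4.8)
The plan is to deduce the corollary directly from the universal property of the bigalois extension together with the two-dimensional $*$-representation $\pi$ just constructed, so that essentially no new computation is needed. Recall from Remark \ref{rmk:autconst} that a quantum isomorphism $(H,P):\mathcal{G}'_d \to \mathcal{G}_d$ is the same data as a unital $*$-homomorphism $\tilde{P}:M_2 \to \C^4 \otimes B(H)$ whose $4\times 4$ coefficient matrix is unitary and which intertwines the adjacency operators via $\tilde{P}A_d=(A'_d\otimes\id)\tilde{P}$; by Lemma \ref{lem:qbijunitary} the unitarity of this matrix is exactly the counit- and comultiplication-preserving conditions. The algebra $\mathcal{O}_d$ was defined to be the universal receptacle for such coefficients, and the map $P:M_2 \to \C^4 \otimes \mathcal{O}_d$ of (\ref{eqn:coefqisoM2C4}) is the universal instance of $\tilde{P}$ with $B(H)$ replaced by $\mathcal{O}_d$.

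First I would set $H=\C^2$ and post-compose with the representation, forming
\[
\tilde{P}_\pi \coloneqq (\id_{\C^4}\otimes \pi)\circ P : M_2 \longrightarrow \C^4 \otimes M_2 \cong \C^4 \otimes B(\C^2).
\]
Because $\pi$ is a unital $*$-homomorphism, $\tilde{P}_\pi$ is again a unital $*$-homomorphism; the entrywise image $(\pi(P_{ij}^r))$ of the unitary coefficient matrix is unitary; and applying $\id\otimes\pi$ to $PA_d=(A'_d\otimes\id_{\mathcal{O}_d})P$ yields $\tilde{P}_\pi A_d=(A'_d\otimes\id_{B(\C^2)})\tilde{P}_\pi$. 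All of these are automatic, since every defining condition of a quantum isomorphism is a $*$-algebraic identity among the coefficients and $\pi$ preserves all such identities. Hence $\tilde{P}_\pi$ is precisely the data of a genuine, finite-dimensional quantum isomorphism $(\C^2,P_\pi):\mathcal{G}'_d\to\mathcal{G}_d$.

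It then only remains to check that $P_\pi$ is nonzero, which holds because $\pi$ is unital: for instance $\pi(S_1S_1^*)=\pi(S_1)\pi(S_1)^*\neq0$, so the coefficients $\pi(P_{ij}^r)$ do not all vanish. A nonzero quantum isomorphism is exactly what Definition \ref{dfn:Qisom} requires, so $\mathcal{G}_d\cong_q\mathcal{G}'_d$ for every $d=1,2,3,4$, and since $H=\C^2$ is finite dimensional this is the strong quantum isomorphism $\cong_q$ rather than merely $\cong_{C^*}$. The only genuine obstacle lies upstream rather than in this corollary: one must know that the abstractly presented algebra $\mathcal{O}_d$ is nontrivial and admits a finite-dimensional representation, which is precisely what the explicit two-dimensional $\pi$ supplies.
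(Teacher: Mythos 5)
Your proposal is correct and matches the paper's own proof, which likewise observes that a finite-dimensional $*$-representation of the bigalois extension $\mathcal{O}_d$ is by construction the same data as a quantum isomorphism, and then applies the two-dimensional representation $\pi$ to the universal map $P$ of (\ref{eqn:coefqisoM2C4}). You spell out the verification (unitality, unitarity of the coefficient matrix, intertwining of $A_d$ and $A'_d$, and nonvanishing) in more detail than the paper, but the underlying argument is identical.
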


\begin{proof}
By definition, a $*$-representation of the bigalois extension on a finite dimensional Hilbert space $H$ is equivalent to a quantum isomorphism between the quantum graphs via $H$. In other words (\ref{eqn:coefqisoM2C4}) with $S_r$ replaced by $\pi(S_r)$ is a quantum isomorphism $(H=\C^2,\pi(P)):\mathcal{G}'_d \to \mathcal{G}_d$.
\end{proof}

\begin{dfn}[{Brannan et al. \cite[Definition 3.11]{Brannan2019bigalois}}]
Quantum groups $G,G'$ are said to be monoidally equivalent 
if their representation categories $\Rep(G)$ and $\Rep(G')$ are unitarily monoidally equivalent as strict $C^*$-tesor categories, i.e., there is an fully faithful essentially surjective functor $\Rep(G)\to\Rep(G')$ that preserves the trivial representation, composition, involution, and tensor product of intertwiners.
\end{dfn}

Brannan et al. \cite[Theorem 4.7]{Brannan2019bigalois} proved that a quantum isomorphism between quantum graphs induces a monoidal equivalence between their quantum automorphism groups. 
Applying this to our result, we obtain the following.

\begin{cor}
\begin{description}
\item[(1)]
The special orthogonal group $SO(3)$ is monoidally equivalent to the quantum symmetric group $S_4^+$.
\item[(2)]
The orthogonal group $O(2)$
is monoidally equivalent to 
the hyperoctahedral quantum group $H_2^+<S_4^+$.
\end{description}
\end{cor}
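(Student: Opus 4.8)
The plan is to combine the quantum isomorphisms of Corollary~\ref{cor:M2C4qiso} with the functoriality theorem of Brannan et al.~\cite[Theorem 4.7]{Brannan2019bigalois}, which states that a (finite-dimensional) quantum isomorphism of quantum graphs induces a monoidal equivalence of their quantum automorphism groups. Since Corollary~\ref{cor:M2C4qiso} produces for each $d$ a quantum isomorphism $\mathcal{G}_d \cong_q \mathcal{G}'_d$ realized on $H=\C^2$, it immediately follows that $\Qut(\mathcal{G}_d)$ and $\Qut(\mathcal{G}'_d)$ are monoidally equivalent; the whole task therefore reduces to identifying the groups on both sides.

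On the $M_2$ side these are already computed: $\Qut(\mathcal{G}_1)\cong SO(3)$ by Theorem~\ref{thm:QutM2d=14} and $\Qut(\mathcal{G}_2)\cong O(2)$ by Theorem~\ref{thm:QutM2d=23}. On the $\C^4$ side I would invoke the standard identifications from quantum permutation group theory. The trivial reflexive graph $\mathcal{G}'_1$ has $A'_1=\id_{\C^4}$, which is central in $\Qut(\C^4,\tau_{\C^4})=S_4^+$, so $\Qut(\mathcal{G}'_1)=S_4^+$; the same holds for $\mathcal{G}'_4=(\mathcal{G}'_1)^c$, whose adjacency operator $4\tau_{\C^4}(\cdot)1$ is likewise central. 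The graph $\mathcal{G}'_2$ is the disjoint union of two reflexive edges on four vertices, whose quantum automorphism group is the hyperoctahedral quantum group $H_2^+$ (cf.~\cite{Banica2007hyperoctahedral}), and $\mathcal{G}'_3=(\mathcal{G}'_2)^c$ yields the same group. Chaining these identifications through the monoidal equivalences gives
\[
SO(3)\cong\Qut(\mathcal{G}_1)\cong_{\mathrm{mon}}\Qut(\mathcal{G}'_1)\cong S_4^+,
\qquad
O(2)\cong\Qut(\mathcal{G}_2)\cong_{\mathrm{mon}}\Qut(\mathcal{G}'_2)\cong H_2^+,
\]
which are precisely the two claimed equivalences.

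The only substantive point is the identification $\Qut(\mathcal{G}'_2)\cong H_2^+$: one must match the defining relations of the quantum automorphism group algebra of the $2K_2$ graph against the standard presentation of $H_2^+$. This is a known result in the quantum permutation group literature, so the step reduces to a citation; every other identification is immediate, since the remaining adjacency operators on $\C^4$ are the identity, a scalar multiple of the projection onto the constants, or their complements, all of which are central in $S_4^+$ and therefore impose no extra relations on $\Qut$.
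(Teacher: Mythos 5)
Your proposal is correct and follows essentially the same route as the paper: invoke the quantum isomorphisms $\mathcal{G}_d \cong_q \mathcal{G}'_d$ of Corollary \ref{cor:M2C4qiso} together with \cite[Theorem 4.7]{Brannan2019bigalois} to get monoidal equivalences, then identify $\Qut(\mathcal{G}_1)\cong SO(3)$, $\Qut(\mathcal{G}_2)\cong O(2)$ from Theorems \ref{thm:QutM2d=14} and \ref{thm:QutM2d=23}, and $\Qut(\mathcal{G}'_1)=S_4^+$, $\Qut(\mathcal{G}'_2)=H_2^+$ on the classical side. The paper disposes of your ``only substantive point'' even more directly than a relation-matching argument: in \cite[Definition 2.1]{Banica2007hyperoctahedral}, $C(H_2^+)$ is \emph{defined} as $\quotient{C(S_4^+)}{\langle A'_2 u=u A'_2 \rangle}$, which is literally the presentation of $C(\Qut(\mathcal{G}'_2))$, so the identification holds by definition rather than by citation of a known computation.
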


\begin{proof}
\noindent\textbf{(1)}
Note that $\Qut(\mathcal{G}'_1)$ is the quantum symmetric group $S_4^+$, whose algebra $C(S_4^+)$ is generated by the universal coefficients of a $4$ by $4$ magic unitary $u$, i.e., a unitary matrix whose entries are projections, mutually orthogonal and summing up to $1$ on each row and column. 
It follows from $\mathcal{G}_1 \cong_q \mathcal{G}'_1$ that $\Qut(\mathcal{G}_1)=SO(3)$ is monoidally equivalent to $\Qut(\mathcal{G}'_1)=S_4^+$.

\noindent\textbf{(2)}
By Banica, Bichon, Collins \cite[Definition 2.1]{Banica2007hyperoctahedral}, $\Qut(\mathcal{G}'_2)$ is the hyperoctahedral quantum group $H_2^+<S_4^+$, whose algebra $C(H_2^+)=\quotient{C(S_4^+)}{\langle A'_2 u=u A'_2 \rangle}$ is generated by the universal coefficients of a magic unitary
\[
u=
\begin{pmatrix}
p_1 & q_1 & p_2 & q_2 \\
q_1 & p_1 & q_2 & p_2 \\
p_3 & q_3 & p_4 & q_4 \\
q_3 & p_3 & q_4 & p_4
\end{pmatrix}.
\]
It follows from $\mathcal{G}_2 \cong_q \mathcal{G}'_2$ that 
$\Qut(\mathcal{G}_2)=O(2)$ is monoidally equivalent to 
$\Qut(\mathcal{G}'_2)=H_2^+$.
\end{proof}

In the case of $d=1,4$, we can also construct a quantum isomorphism using the symmetry of the $24$-cell and four-dimensional hypercube.

\begin{thm}
The bigalois extension $\mathcal{O}_d \ (d=1,4)$ admits a four-dimensional $*$-representation $\rho:\mathcal{O}_d \to M_4$ defined by
{\small
\begin{align*}
\rho(S_1)&= \frac{\sqrt{2}}{6}
\begin{pmatrix}
0 & 0 & 0 & 0 \\ 
-3 & -1 & -1 & -1 \\ 
0 & 2 & 2 & 2 \\ 
3 & -1 & -1 & -1 
\end{pmatrix};
&\rho(S_2)&=\frac{\sqrt{2}}{6}
\begin{pmatrix}
-1 & 3 & -1 & 1 \\ 
0 & 0 & 0 & 0 \\ 
-1 & -3 & -1 & 1 \\ 
-2 & 0 & -2 & 2 
\end{pmatrix};
\\
\rho(S_3)&=\frac{\sqrt{2}}{6}
\begin{pmatrix}
2 & -2 & 0 & 2 \\
1 & -1 & 3 & 1 \\ 
0 & 0 & 0 & 0 \\  
-1 & 1 & 3 & -1 
\end{pmatrix};
&\rho(S_4)&=\frac{\sqrt{2}}{6}
\begin{pmatrix}
-1 & -1 & 1 & -3 \\
2 & 2 & -2 & 0 \\  
1 & 1 & -1 & -3 \\ 
0 & 0 & 0 & 0  
\end{pmatrix}.
\end{align*}
}
\end{thm}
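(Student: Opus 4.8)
The plan is to verify directly that the four matrices $\rho(S_1),\dots,\rho(S_4)$ satisfy every defining relation of $\mathcal{O}_1=\mathcal{O}_4$ recorded in the preceding proposition. Since $\mathcal{O}_d$ is by construction the universal $*$-algebra on the generators $S_r$ subject to those relations, giving a $*$-representation is precisely the data of an assignment $S_r\mapsto\rho(S_r)$ (with $S_r^*\mapsto\rho(S_r)^\dagger$) that respects the relations; linearity and $*$-compatibility are then automatic. It suffices to treat $d=1$, because $\mathcal{O}_4=\mathcal{O}_1$ and $\mathcal{O}_2=\mathcal{O}_3$ is a quotient, so the whole content of the theorem is a finite relation-check.

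First I would record the source and range projections $P_r:=\rho(S_r)^\dagger\rho(S_r)$ and $Q_r:=\rho(S_r)\rho(S_r)^\dagger$. Each $\rho(S_r)$ has its $r$-th row equal to zero and, after the normalization $\tfrac{\sqrt2}{6}$, has rank two (several of its columns being proportional); a short computation then shows that $P_r$ is a rank-two projection, so that $\rho(S_r)$ is a partial isometry and the relation $S_rS_r^*S_r=S_r$ holds, and that $P_r+Q_r=1_{M_4}$, which is the orthocomplement relation $S_rS_r^*+S_r^*S_r=1$. Because source and range are then complementary we have $P_rQ_r=0$, which forces $\rho(S_r)^2=0$; I would note this identity for later use. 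The relation $\sum_r\rho(S_r)=0$ is immediate upon adding the four matrices entrywise, and $\sum_rP_r=2\cdot 1_{M_4}$ follows by summing the four rank-two projections (consistently, $\Tr P_r=2$ for each $r$, so the total trace is $8$).

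The main obstacle is the twelve cross-relations $S_s^*S_r=-S_s^*S_sS_r^*S_r$ for $r\neq s$, equivalently $\rho(S_s)^\dagger\rho(S_r)=-P_sP_r$ in terms of the projections computed above. These couple distinct generators and cannot be read off by inspection: they require forming the products $\rho(S_s)^\dagger\rho(S_r)$ and comparing them with the products $P_sP_r$ of the source projections. I expect this to be the bulk of the argument, though the zero-row and rank-two structure of each matrix keeps every individual product small, and the parallel structure of the four matrices should allow several of these checks to be deduced from one another by the symmetry permuting the index $r$, reducing the number of genuinely distinct computations.

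Finally, I would remark on the provenance of the matrices: they encode the action on the four-dimensional hypercube and the $24$-cell alluded to just before the statement, which is the geometric reason the relations hold, while the verification above makes this rigorous. Since the matrices are visibly nonzero, $\rho$ is a nonzero $*$-representation of $\mathcal{O}_d$ on $H=\C^4$; combined with the correspondence between finite-dimensional $*$-representations of the bigalois extension and quantum isomorphisms used in Corollary \ref{cor:M2C4qiso}, this exhibits a further explicit quantum isomorphism $\mathcal{G}'_d\to\mathcal{G}_d$ realized through $\C^4$.
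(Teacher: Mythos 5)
Your proposal is correct in strategy and would work: by universality of $\mathcal{O}_1$ it is indeed enough to check that the four matrices satisfy the listed relations, you account for every relation in the presentation (partial isometry, complementarity of source and range, the two sum relations, and the twelve cross-relations), and the reduction to $d=1$ via $\mathcal{O}_4=\mathcal{O}_1$ matches the paper. (The aside about $\mathcal{O}_2=\mathcal{O}_3$ being a quotient is irrelevant here: the theorem claims nothing for $d=2,3$, and in fact $\rho(S_1)+\rho(S_2)\neq 0$, so $\rho$ does not descend to that quotient.) Where you genuinely differ from the paper is in how the verification is organized. You propose raw matrix arithmetic: compute $P_r=\rho(S_r)^\dagger\rho(S_r)$ and $Q_r=\rho(S_r)\rho(S_r)^\dagger$, then compare the products $\rho(S_s)^\dagger\rho(S_r)$ with $-P_sP_r$ for $s\neq r$. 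The paper instead writes each matrix as a sum of two rank-one operators, $\rho(S_r)=w_re_r^\dagger+w_r^\perp e_r^{\perp\dagger}$, where the four vectors in each $\rho(S_r)$ are orthonormal and, crucially, each of the four families $\{e_r\}_r$, $\{e_r^\perp\}_r$, $\{w_r\}_r$, $\{w_r^\perp\}_r$ is an ONB of $\C^4$. With that structure the partial-isometry and complementarity relations are immediate; $\sum_r\rho(S_r)^*\rho(S_r)=2$ follows because $\rho(S_r)^*\rho(S_r)$ is the projection onto $\C e_r+\C e_r^\perp$ and the $e$'s and $e^\perp$'s each resolve the identity; and, the real gain, the cross-relation in the equivalent form $\rho(S_s)\rho(S_s)^*\rho(S_r)=-\rho(S_s)\rho(S_r)^*\rho(S_r)$ collapses, once the inner products $\braket{w_s|w_r}$, $\braket{w_s^\perp|w_r^\perp}$, $\braket{e_s|e_r}$, $\braket{e_s^\perp|e_r^\perp}$ vanish for $s\neq r$, to the scalar identity $\braket{w_s|w_r^\perp}=-\braket{e_s|e_r^\perp}$, verified by evaluating twelve numbers, all equal to $\pm 1/\sqrt{3}$. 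So both proofs are finite verifications, but yours costs on the order of two dozen $4\times 4$ matrix products (you can at least halve the cross-checks, since the relation for the pair $(r,s)$ is the adjoint of the one for $(s,r)$ — that is the precise form of the symmetry you allude to), while the paper's decomposition turns the same content into scalar inner-product checks and simultaneously makes rigorous the hypercube/24-cell geometry ($L_r$ spanned by $e_r,e_r^\perp$ and $L_r^\perp$ by $w_r,w_r^\perp$) that your sketch mentions only as motivation.
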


We have a conceptually easier presentation of $\rho(S_r)$'s:
{\footnotesize
\begin{align*}
\rho(S_1)&= 
\frac{\sqrt{2}}{2}
\begin{pmatrix} 0 \\ -1 \\ 0 \\ 1 \end{pmatrix}
\begin{pmatrix} 1 & 0 & 0 & 0 \end{pmatrix}
+
\frac{\sqrt{2}}{6}
\begin{pmatrix} 0 \\ -1 \\ 2 \\ -1 \end{pmatrix}
\begin{pmatrix} 0 & 1 & 1 & 1 \end{pmatrix};
\\
\rho(S_2)&= 
\frac{\sqrt{2}}{2}
\begin{pmatrix} 1 \\ 0 \\ -1 \\ 0 \end{pmatrix}
\begin{pmatrix} 0 & 1 & 0 & 0 \end{pmatrix}
+
\frac{\sqrt{2}}{6}
\begin{pmatrix} -1 \\ 0 \\ -1 \\ -2 \end{pmatrix}
\begin{pmatrix} 1 & 0 & 1 & -1 \end{pmatrix};
\\
\rho(S_3)&=
\frac{\sqrt{2}}{2}
\begin{pmatrix} 0 \\ 1 \\ 0 \\ 1 \end{pmatrix}
\begin{pmatrix} 0 & 0 & 1 & 0 \end{pmatrix}
+
\frac{\sqrt{2}}{6}
\begin{pmatrix} 2 \\ 1 \\ 0 \\ -1 \end{pmatrix}
\begin{pmatrix} 1 & -1 & 0 & 1 \end{pmatrix};
\\
\rho(S_4)&=
\frac{\sqrt{2}}{2}
\begin{pmatrix} -1 \\ 0 \\ -1 \\ 0 \end{pmatrix}
\begin{pmatrix} 0 & 0 & 0 & 1 \end{pmatrix}
+
\frac{\sqrt{2}}{6}
\begin{pmatrix} -1 \\ 2 \\ 1 \\ 0 \end{pmatrix}
\begin{pmatrix} 1 & 1 & -1 & 0 \end{pmatrix}.
\end{align*}
}

The four vectors in $\rho(S_r)$ are mutually orthogonal and normalized by the coefficients. Put these orthonormal vectors $w_r,e_r,w_r^\perp,e_r^\perp$, so that we have 
\[
\rho(S_r)=w_r e_r^\dagger + w_r^\perp e_r^{\perp\dagger}.
\]

The row vectors $e_r^\perp$ in the second term correspond to a mutually orthogonal choice of the diagonal lines of the surface cubes of the hypercube. And the two row vectors $e_r,e_r^\perp$ span the plane $L_r$ containing the two parallel diagonal lines of the opposite surface cubes as in \figref{fig:posofL_r}.
The two column vectors $w_r,w_r^\perp$ span its orthocomplement $L_r^\perp$, which is the plane containing one of the four hexagons given by a partition of the $24$ vertices of the $24$-cell as in \figref{fig:posofL_r}.

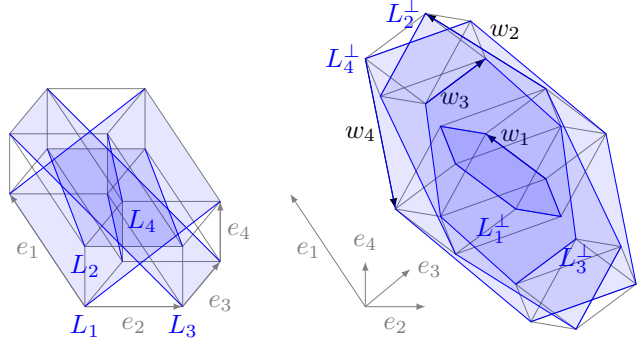
\begin{figure}[hbtp]
\centering
\caption{Positions of $L_r$ in a hypercube and $L_r^\perp$ in a 24-cell}
\label{fig:posofL_r}
\begin{tabular}{cc}
\begin{tikzpicture}
\draw (0,0)coordinate(0)  
	(-1,1.5)coordinate(1) (1.3,0)coordinate(2) (0.5,0.6)coordinate(3) (0,0.8)coordinate(4)
	($(1)+(2)$)coordinate(12) ($(1)+(3)$)coordinate(13) 
	($(1)+(4)$)coordinate(14) 	($(2)+(3)$)coordinate(23) 
	($(2)+(4)$)coordinate(24) ($(3)+(4)$)coordinate(34)
	($(12)+(3)$)coordinate(123) ($(12)+(4)$)coordinate(124) 
	($(13)+(4)$)coordinate(134) ($(23)+(4)$)coordinate(234) 
	($(123)+(4)$)coordinate(1234);
\draw[help lines] (0)--(1) (0)--(2) (0)--(3) (0)--(4)
	(1)--(12) (1)--(13) (1)--(14) (2)--(12) (2)--(23) (2)--(24)
	(3)--(13) (3)--(23) (3)--(34) (4)--(14) (4)--(24) (4)--(34)
	(12)--(123) (12)--(124) (13)--(123) (13)--(134) (14)--(124) (14)--(134)
	(23)--(123) (23)--(234) (24)--(124) (24)--(234) (34)--(134) (34)--(234)
	(123)--(1234) (124)--(1234) (134)--(1234) (234)--(1234);
\draw[help lines,->,>=latex] (0)--(1) node[midway,left]{$e_1$};
\draw[help lines,->,>=latex] (0)--(2) node[midway,below]{$e_2$};
\draw[help lines,->,>=latex] (2)--(23) node[midway,below right]{$e_3$};
\draw[help lines,->,>=latex] (23)--(234) node[midway,right]{$e_4$};
\draw[blue] (0)--(234) (1234)--(1) (0)node[below]{$L_1$};
\draw[blue] (4)--(13) (123)--(24) (4)node[below]{$L_2$};
\draw[blue] (2)--(14) (134)--(23) (2)node[below]{$L_3$};
\draw[blue] (3)--(12) (124)--(34) (34)node[below right]{$L_4$};
\fill[blue,opacity=.1] (0)--(234)--(1234)--(1)--(0);
\fill[blue,opacity=.1] (4)--(13)--(123)--(24)--(4);
\fill[blue,opacity=.1] (2)--(14)--(134)--(23)--(2);
\fill[blue,opacity=.1] (3)--(12)--(124)--(34)--(3);
\end{tikzpicture}
&
\begin{tikzpicture}
\draw (0,0)coordinate(0)  
	(-1,1.5)coordinate(1) (0.8,0)coordinate(2) (0.6,0.5)coordinate(3) (0,0.6)coordinate(4)
	($(1)+(2)$)coordinate(12) ($(1)-(2)$)coordinate(1-2) 
	($(1)+(3)$)coordinate(13) ($(1)-(3)$)coordinate(1-3) 
	($(1)+(4)$)coordinate(14) ($(1)-(4)$)coordinate(1-4) 
	($(2)+(3)$)coordinate(23) ($(2)-(3)$)coordinate(2-3) 
	($(2)+(4)$)coordinate(24) ($(2)-(4)$)coordinate(2-4) 
	($(3)+(4)$)coordinate(34) ($(3)-(4)$)coordinate(3-4) 
	($(0)-(1)+(2)$)coordinate(-12) ($(0)-(1)-(2)$)coordinate(-1-2) 
	($(0)-(1)+(3)$)coordinate(-13) ($(0)-(1)-(3)$)coordinate(-1-3) 
	($(0)-(1)+(4)$)coordinate(-14) ($(0)-(1)-(4)$)coordinate(-1-4) 
	($(0)-(2)+(3)$)coordinate(-23) ($(0)-(2)-(3)$)coordinate(-2-3) 
	($(0)-(2)+(4)$)coordinate(-24) ($(0)-(2)-(4)$)coordinate(-2-4) 
	($(0)-(3)+(4)$)coordinate(-34) ($(0)-(3)-(4)$)coordinate(-3-4);
\draw[help lines] 
	(12)--(13)--(1-2)--(1-3)--(12)--(14)--(13)--(1-4)--(1-3)--(14)--(1-2)--(1-4)--(12) 
	(23)--(24)--(2-3)--(2-4)--(23)
		(34)--(23)
		--(3-4)--(-23)--(34) 
		(34)--(24)--(-34)--(-24)--(34)
	(-12)--(-13)--(-1-2)--(-1-3)--(-12)--(-14)--(-13)--(-1-4)--(-1-3)--(-14)--(-1-2)--(-1-4)--(-12) 
	(-23)--(-24)--(-2-3)--(-2-4)--(-23)
		(-34)--(2-3)
		--(-3-4)--(-2-3)--(-34) 
		(3-4)--(2-4)--(-3-4)--(-2-4)--(3-4)
	;
\draw[help lines,->,>=latex] (-1.8,-1.8)coordinate(0')--++(1) node[midway,left]{$e_1$};
\draw[help lines,->,>=latex] (0')--++(2) node[midway,below]{$e_2$};
\draw[help lines,->,>=latex] (0')--++(3) node[right]{$e_3$};
\draw[help lines,->,>=latex] (0')--++(4) node[above]{$e_4$};
\draw[->,>=latex] (3-4)--(-23) node[midway,above]{$w_1$};
\draw[->,>=latex] (34)--(14) node[midway,above right]{$w_2$};
\draw[->,>=latex] (1-4)--(12) node[midway,below]{$w_3$};
\draw[->,>=latex] (1-2)--(-2-3) node[midway,left]{$w_4$};
\draw[blue] (2-3)--(-34)--(-24)--(-23)--(3-4)--(2-4)--cycle 
	node[below left]{$L_1^\perp$};
\draw[blue] (34)--(14)--(1-3)--(-3-4)--(-1-4)--(-13)--cycle 
	(14)node[left]{$L_2^\perp$};
\draw[blue] (-1-2)--(-14)--(24)--(12)--(1-4)--(-2-4)--cycle 
	(-14)node[below]{$L_3^\perp$};
\draw[blue] (1-2)--(-2-3)--(-1-3)--(-12)--(23)--(13)--cycle 
	node[left]{$L_4^\perp$};
\fill[blue,opacity=.1] (2-3)--(-34)--(-24)--(-23)--(3-4)--(2-4)--cycle ;
\fill[blue,opacity=.1] (34)--(14)--(1-3)--(-3-4)--(-1-4)--(-13)--cycle ;
\fill[blue,opacity=.1] (-1-2)--(-14)--(24)--(12)--(1-4)--(-2-4)--cycle ;
\fill[blue,opacity=.1] (1-2)--(-2-3)--(-1-3)--(-12)--(23)--(13)--cycle ;
\end{tikzpicture}
\end{tabular}

The hypercube is $[0,1]^4$ centered in $\R^4$. 
The 24-cell is the convex hull of $\{\pm e_i \pm e_j\}_{i \neq j}$.
For simplicity the 24-cell is drawn only on the hyperplanes of the first coordinate $x_1=\pm1,0$, which are octahedrons and a cuboctahedron.
\end{figure}

\begin{proof}
Since the four vectors in $\rho(S_r)$ are orthonormal, $\rho(S_r)$ is a partial isometry 
$\rho(S_r) \rho(S_r)^* \rho(S_r)=\rho(S_r)$ satisfying 
$\rho(S_r) \rho(S_r)^* + \rho(S_r)^* \rho(S_r)=1$. 
We have by direct computation that $\sum_{r=1}^4 \rho(S_r)=0$.
Since $\rho(S_r)^* \rho(S_r)$ is the projection onto the plane 
$L_r=\C e_r + \C e_r^\perp$, we obtain
$\sum_{r=1}^4 \rho(S_r)^*\rho(S_r)=2$
because the raw vectors $\{e_r\}_r$ and $\{e_r^\perp\}_r$ are both ONB's for $\C^4$.
Finally it suffices to show 
\[
\rho(S_s) \rho(S_s)^* \rho(S_r) = - \rho(S_s) \rho(S_r)^* \rho(S_r)
\] for all $r \neq s$, which is equivalent to \[
\rho(S_s)^* \rho(S_r) = -\rho(S_s)^* \rho(S_s) \rho(S_r)^* \rho(S_r).
\]
By direct computation, we obtain
\begin{align*}
\rho(S_s) \rho(S_s)^* \rho(S_r)
&=(w_s w_s^\dagger+w_s^\perp w_s^{\perp\dagger})
	(w_r e_r^\dagger + w_r^\perp e_r^{\perp\dagger})
\\
&=\begin{pmatrix} w_s & w_s^\perp \end{pmatrix}
\begin{pmatrix} 
\braket{w_s|w_r} & \braket{w_s|w_r^\perp} \\
\braket{w_s^\perp|w_r} & \braket{w_s^\perp|w_r^\perp} 
\end{pmatrix}
\begin{pmatrix} e_r^\dagger \\ e_r^{\perp\dagger} \end{pmatrix},
\end{align*}
and similarly
\begin{align*}
\rho(S_s) \rho(S_r)^* \rho(S_r)
&=(w_s e_s^\dagger+w_s^\perp e_s^{\perp\dagger})
	(e_r e_r^\dagger + e_r^\perp e_r^{\perp\dagger})
\\
&=\begin{pmatrix} w_s & w_s^\perp \end{pmatrix}
\begin{pmatrix} 
\braket{e_s|e_r} & \braket{e_s|e_r^\perp} \\
\braket{e_s^\perp|e_r} & \braket{e_s^\perp|e_r^\perp} 
\end{pmatrix}
\begin{pmatrix} e_r^\dagger \\ e_r^{\perp\dagger} \end{pmatrix}.
\end{align*}
Since $\{e_r\}_r, \{e_r^\perp\}_r, \{w_r\}_r, \{w_r^\perp\}_r$ are chosen to be ONB's, we have 
\[
\braket{e_s|e_r}=\braket{e_s^\perp|e_r^\perp}=\braket{w_s|w_r}=\braket{w_s^\perp|w_r^\perp} =0
\]
for all $s \neq r$.
Thus it reduces to show $\braket{w_s|w_r^\perp}=-\braket{e_s|e_r^\perp}$
for all $s \neq r$.
It indeed holds that
\begin{align*} 
\braket{w_1|w_r^\perp}&=-\frac{1}{\sqrt{3}}=-\braket{e_1|e_r^\perp} \quad (r=2,3,4); \\
\braket{w_2|w_4^\perp}&=-\frac{1}{\sqrt{3}}=-\braket{e_2|e_4^\perp}; \quad
\braket{w_2|w_r^\perp}=\frac{1}{\sqrt{3}}=-\braket{e_2|e_r^\perp}  \quad (r=1,3); \\
\braket{w_3|w_2^\perp}&=-\frac{1}{\sqrt{3}}=-\braket{e_3|e_2^\perp}; \quad
\braket{w_3|w_r^\perp}=\frac{1}{\sqrt{3}}=-\braket{e_3|e_r^\perp} \quad (r=1,4); \\
\braket{w_4|w_3^\perp}&=\frac{1}{\sqrt{3}}=-\braket{e_4|e_3^\perp}; \quad
\braket{w_4|w_r^\perp}=-\frac{1}{\sqrt{3}}=-\braket{e_4|e_r^\perp} \quad (r=1,2).
\end{align*}
Therefore $\rho$ defines a $*$-homomorphism $\mathcal{O}_1 \to M_4$.
\end{proof}

\section*{Concluding Remarks}

For future perspective, it is natural to consider the classification of general directed quantum graphs on $M_2$ and to ask which quantum subgroup of $SO_q(3)=\Qut(M_2,\omega_q)$ is obtained as a quantum automorphism group of them.
Such a classification will help us to approach a quantum graph version of the Frucht property: whether a quantum group acting on a quantum graph is isomorphic to the quantum automorphism group of some quantum graph. Its classical graph version is discussed by Banica, McCarthy \cite{Banica2021frucht} with several counterexamples.

Since we introduced the regularity of quantum graphs, it is natural to ask whether the spectrum of a regular quantum graph can characterize its properties (connected, bipartite, expander, etc.) similarly to classical cases.
It is the next step to investigate the connectedness of quantum graphs on $M_n$ introduced by Ch{\'a}vez-Dom{\'\i}nguez, Swift \cite{Chavez2021connectivity}.

\bibliographystyle{plain}
\bibliography{bunken.bib}

\begin{thebibliography}{10}

\bibitem{Banica2021frucht}
Teo Banica and JP~McCarthy.
\newblock The frucht property in the quantum group setting.
\newblock {\em arXiv preprint arXiv:2106.04999}, 2021.

\bibitem{Banica1999symmetries}
Teodor Banica.
\newblock Symmetries of a generic coaction.
\newblock {\em Mathematische Annalen}, 314(4):763--780, 1999.

\bibitem{Banica2002quantum}
Teodor Banica.
\newblock Quantum groups and {F}uss--{C}atalan algebras.
\newblock {\em Communications in mathematical physics}, 226(1):221--232, 2002.

\bibitem{Banica2005quantum}
Teodor Banica.
\newblock Quantum automorphism groups of homogeneous graphs.
\newblock {\em Journal of Functional Analysis}, 224(2):243--280, 2005.

\bibitem{Banica2007hyperoctahedral}
Teodor Banica, Julien Bichon, and Beno\^{\i}t Collins.
\newblock The hyperoctahedral quantum group.
\newblock {\em J. Ramanujan Math. Soc.}, 22(4):345--384, 2007.

\bibitem{Bichon2003quantum}
Julien Bichon.
\newblock Quantum automorphism groups of finite graphs.
\newblock {\em Proceedings of the American Mathematical Society},
  131(3):665--673, 2003.

\bibitem{Brannan2019bigalois}
Michael Brannan, Alexandru Chirvasitu, Kari Eifler, Samuel Harris, Vern
  Paulsen, Xiaoyu Su, and Mateusz Wasilewski.
\newblock Bigalois extensions and the graph isomorphism game.
\newblock {\em Communications in Mathematical Physics}, 375(3):1777–1809, Sep
  2019.

\bibitem{Brannan2020quantum}
Mike Brannan, Kari Eifler, Christian Voigt, and Moritz Weber.
\newblock Quantum {C}untz-{K}rieger algebras.
\newblock {\em arXiv preprint arXiv:2009.09466}, 2020.

\bibitem{Chavez2021connectivity}
Javier~Alejandro Ch{\'a}vez-Dom{\'\i}nguez and Andrew~T Swift.
\newblock Connectivity for quantum graphs.
\newblock {\em Linear Algebra and its Applications}, 608:37--53.

\bibitem{Duan2012zero}
Runyao Duan, Simone Severini, and Andreas Winter.
\newblock Zero-error communication via quantum channels, noncommutative graphs,
  and a quantum {L}ov{\'a}sz number.
\newblock {\em IEEE Transactions on Information Theory}, 59(2):1164--1174,
  2012.

\bibitem{Gromada2021some}
Daniel Gromada.
\newblock Some examples of quantum graphs.
\newblock {\em arXiv preprint arXiv:2109.13618}, 2021.

\bibitem{Kuperberg2012neumann}
Greg Kuperberg and Nik Weaver.
\newblock {\em A von Neumann algebra approach to quantum metrics/{Q}uantum
  relations}.
\newblock Number 1010. American Mathematical Society, 2012.

\bibitem{Musto2018compositional}
Benjamin Musto, David Reutter, and Dominic Verdon.
\newblock A compositional approach to quantum functions.
\newblock {\em Journal of Mathematical Physics}, 59(8):081706, 2018.

\bibitem{Musto2019morita}
Benjamin Musto, David Reutter, and Dominic Verdon.
\newblock The {M}orita theory of quantum graph isomorphisms.
\newblock {\em Communications in Mathematical Physics}, 365(2):797--845, 2019.

\bibitem{Podles1995symmetries}
Piotr Podle\'s.
\newblock Symmetries of quantum spaces. subgroups and quotient spaces of
  quantum {$SU (2)$} and {$SO (3)$} groups.
\newblock {\em Communications in Mathematical Physics}, 170(1):1--20, 1995.

\bibitem{Soltan2010quantum}
Piotr~M So{\l}tan.
\newblock Quantum {$SO(3)$} groups and quantum group actions on {$M_2$}.
\newblock {\em Journal of Noncommutative Geometry}, 4(1):1--28, 2010.

\bibitem{Vicary2011categorical}
Jamie Vicary.
\newblock Categorical formulation of finite-dimensional quantum algebras.
\newblock {\em Communications in Mathematical Physics}, 304(3):765--796, 2011.

\bibitem{Wang1998quantum}
Shuzhou Wang.
\newblock Quantum symmetry groups of finite spaces.
\newblock {\em Communications in Mathematical Physics}, 195(1):195--211, 1998.

\bibitem{Weaver2021quantum}
Nik Weaver.
\newblock Quantum graphs as quantum relations.
\newblock {\em The Journal of Geometric Analysis}, pages 1--23, 2021.

\bibitem{Woronowicz1987compact}
Stanis{\l}aw~L Woronowicz.
\newblock Compact matrix pseudogroups.
\newblock {\em Communications in Mathematical Physics}, 111(4):613--665, 1987.

\bibitem{Woronowicz1998compact}
Stanis{\l}aw~L Woronowicz.
\newblock Compact quantum groups.
\newblock {\em Symetries quantiques, Papers from the NATO Advanced Study
  Institute, Les Houches, 1995}, pages 845--884, 1998.

\end{thebibliography}

\end{document}